
\documentclass[12pt]{article}
\usepackage{amsmath}
\usepackage{graphicx,psfrag,epsf}
\usepackage{enumerate}
\usepackage{natbib}
\usepackage{url}
\usepackage{amssymb}
\usepackage{amsmath}
\usepackage{epsfig,amsthm}
\usepackage{graphicx}
\usepackage{mathrsfs}
\usepackage{rotating,booktabs}
\usepackage{dcolumn}
\usepackage{subfigure}
\usepackage{graphicx}
\usepackage{mathrsfs}
\usepackage{multirow}
\usepackage{booktabs}
\usepackage{color}
\usepackage{times}
\usepackage{indentfirst}

\newtheorem{theorem}{{ Theorem}}
\newtheorem{corollary}{{ Corollary}}
\newtheorem{lemma}{{ Lemma}}
\newtheorem{proposition}{{ Proposition}}

\newtheorem{remark}{{ Remark}}
\newtheorem{example}{{ Example}}

\newcommand{\ignore}[1]{}
{}

\setlength{\parindent}{2em} 
\definecolor{red}{rgb}{1,0,0}

\newcommand{\indep}{\;\, \rule[0em]{.03em}{.67em} \hspace{-.25em}
\rule[0em]{.65em}{.03em} \hspace{-.25em}
\rule[0em]{.03em}{.67em}\;\,}

\newcommand{\blind}{1}
\addtolength{\oddsidemargin}{-.5in}
\addtolength{\evensidemargin}{-.5in}
\addtolength{\textwidth}{1in}
\addtolength{\textheight}{-.3in}
\addtolength{\topmargin}{-.8in}
\linespread{1.6}

\typeout{TCILATEX Macros for Scientific Word 5.0 <13 Feb 2003>.}
\typeout{NOTICE:  This macro file is NOT proprietary and may be 
freely copied and distributed.}
\makeatletter

\ifx\pdfoutput\relax\let\pdfoutput=\undefined\fi
\newcount\msipdfoutput
\ifx\pdfoutput\undefined
\else
 \ifcase\pdfoutput
 \else 
    \msipdfoutput=1
    \ifx\paperwidth\undefined
    \else
      \ifdim\paperheight=0pt\relax
      \else
        \pdfpageheight\paperheight
      \fi
      \ifdim\paperwidth=0pt\relax
      \else
        \pdfpagewidth\paperwidth
      \fi
    \fi
  \fi  
\fi

%

%
\newcount\@hour\newcount\@minute\chardef\@x10\chardef\@xv60
\def\tcitime{
\def\@time{%
  \@minute\time\@hour\@minute\divide\@hour\@xv
  \ifnum\@hour<\@x 0\fi\the\@hour:%
  \multiply\@hour\@xv\advance\@minute-\@hour
  \ifnum\@minute<\@x 0\fi\the\@minute
  }}%


\def\x@hyperref#1#2#3{%
   \catcode`\~ = 12
   \catcode`\$ = 12
   \catcode`\_ = 12
   \catcode`\# = 12
   \catcode`\& = 12
   \catcode`\% = 12
   \y@hyperref{#1}{#2}{#3}%
}

\def\y@hyperref#1#2#3#4{%
   #2\ref{#4}#3
   \catcode`\~ = 13
   \catcode`\$ = 3
   \catcode`\_ = 8
   \catcode`\# = 6
   \catcode`\& = 4
   \catcode`\% = 14
}

\@ifundefined{hyperref}{\let\hyperref\x@hyperref}{}
\@ifundefined{msihyperref}{\let\msihyperref\x@hyperref}{}

\@ifundefined{qExtProgCall}{\def\qExtProgCall#1#2#3#4#5#6{\relax}}{}
%
%
%
%
\def\QCTOpt[#1]#2{%
  \def\QCTOptB{#1}
  \def\QCTOptA{#2}
}
\def\QCTNOpt#1{%
  \def\QCTOptA{#1}
  \let\QCTOptB\empty
}
\def\Qct{%
  \@ifnextchar[{%
    \QCTOpt}{\QCTNOpt}
}
\def\QCBOpt[#1]#2{%
  \def\QCBOptB{#1}%
  \def\QCBOptA{#2}%
}
\def\QCBNOpt#1{%
  \def\QCBOptA{#1}%
  \let\QCBOptB\empty
}
\def\Qcb{%
  \@ifnextchar[{%
    \QCBOpt}{\QCBNOpt}%
}
\def\PrepCapArgs{%
  \ifx\QCBOptA\empty
    \ifx\QCTOptA\empty
      {}%
    \else
      \ifx\QCTOptB\empty
        {\QCTOptA}%
      \else
        [\QCTOptB]{\QCTOptA}%
      \fi
    \fi
  \else
    \ifx\QCBOptA\empty
      {}%
    \else
      \ifx\QCBOptB\empty
        {\QCBOptA}%
      \else
        [\QCBOptB]{\QCBOptA}%
      \fi
    \fi
  \fi
}
\newcount\GRAPHICSTYPE
\GRAPHICSTYPE=\z@
\def\GRAPHICSPS#1{%
 \ifcase\GRAPHICSTYPE
   \special{ps: #1}%
 \or
   \special{language "PS", include "#1"}%
 \fi
}%
%
%
%

\def\graffile#1#2#3#4{%
    \bgroup
	   \@inlabelfalse
       \leavevmode
       \@ifundefined{bbl@deactivate}{\def~{\string~}}{\activesoff}%
        \raise -#4 \BOXTHEFRAME{%
           \hbox to #2{\raise #3\hbox to #2{\null #1\hfil}}}%
    \egroup
}%
%
\def\draftbox#1#2#3#4{%
 \leavevmode\raise -#4 \hbox{%
  \frame{\rlap{\protect\tiny #1}\hbox to #2%
   {\vrule height#3 width\z@ depth\z@\hfil}%
  }%
 }%
}%
\newcount\@msidraft
\@msidraft=\z@
\let\nographics=\@msidraft
\newif\ifwasdraft
\wasdraftfalse

\def\GRAPHIC#1#2#3#4#5{%
   \ifnum\@msidraft=\@ne\draftbox{#2}{#3}{#4}{#5}%
   \else\graffile{#1}{#3}{#4}{#5}%
   \fi
}
\def\addtoLaTeXparams#1{%
    \edef\LaTeXparams{\LaTeXparams #1}}%
%

\newif\ifBoxFrame \BoxFramefalse
\newif\ifOverFrame \OverFramefalse
\newif\ifUnderFrame \UnderFramefalse

\def\BOXTHEFRAME#1{%
   \hbox{%
      \ifBoxFrame
         \frame{#1}%
      \else
         {#1}%
      \fi
   }%
}

\def\doFRAMEparams#1{\BoxFramefalse\OverFramefalse\UnderFramefalse\readFRAMEparams#1\end}%
\def\readFRAMEparams#1{%
 \ifx#1\end%
  \let\next=\relax
  \else
  \ifx#1i\dispkind=\z@\fi
  \ifx#1d\dispkind=\@ne\fi
  \ifx#1f\dispkind=\tw@\fi
  \ifx#1t\addtoLaTeXparams{t}\fi
  \ifx#1b\addtoLaTeXparams{b}\fi
  \ifx#1p\addtoLaTeXparams{p}\fi
  \ifx#1h\addtoLaTeXparams{h}\fi
  \ifx#1X\BoxFrametrue\fi
  \ifx#1O\OverFrametrue\fi
  \ifx#1U\UnderFrametrue\fi
  \ifx#1w
    \ifnum\@msidraft=1\wasdrafttrue\else\wasdraftfalse\fi
    \@msidraft=\@ne
  \fi
  \let\next=\readFRAMEparams
  \fi
 \next
 }%
%

\def\IFRAME#1#2#3#4#5#6{%
      \bgroup
      \let\QCTOptA\empty
      \let\QCTOptB\empty
      \let\QCBOptA\empty
      \let\QCBOptB\empty
      #6%
      \parindent=0pt
      \leftskip=0pt
      \rightskip=0pt
      \setbox0=\hbox{\QCBOptA}%
      \@tempdima=#1\relax
      \ifOverFrame
          \typeout{This is not implemented yet}%
          \show\HELP
      \else
         \ifdim\wd0>\@tempdima
            \advance\@tempdima by \@tempdima
            \ifdim\wd0 >\@tempdima
               \setbox1 =\vbox{%
                  \unskip\hbox to \@tempdima{\hfill\GRAPHIC{#5}{#4}{#1}{#2}{#3}\hfill}%
                  \unskip\hbox to \@tempdima{\parbox[b]{\@tempdima}{\QCBOptA}}%
               }%
               \wd1=\@tempdima
            \else
               \textwidth=\wd0
               \setbox1 =\vbox{%
                 \noindent\hbox to \wd0{\hfill\GRAPHIC{#5}{#4}{#1}{#2}{#3}\hfill}\\%
                 \noindent\hbox{\QCBOptA}%
               }%
               \wd1=\wd0
            \fi
         \else
            \ifdim\wd0>0pt
              \hsize=\@tempdima
              \setbox1=\vbox{%
                \unskip\GRAPHIC{#5}{#4}{#1}{#2}{0pt}%
                \break
                \unskip\hbox to \@tempdima{\hfill \QCBOptA\hfill}%
              }%
              \wd1=\@tempdima
           \else
              \hsize=\@tempdima
              \setbox1=\vbox{%
                \unskip\GRAPHIC{#5}{#4}{#1}{#2}{0pt}%
              }%
              \wd1=\@tempdima
           \fi
         \fi
         \@tempdimb=\ht1
         \advance\@tempdimb by -#2
         \advance\@tempdimb by #3
         \leavevmode
         \raise -\@tempdimb \hbox{\box1}%
      \fi
      \egroup%
}%
%
\def\DFRAME#1#2#3#4#5{%
  \vspace\topsep
  \hfil\break
  \bgroup
     \leftskip\@flushglue
	 \rightskip\@flushglue
	 \parindent\z@
	 \parfillskip\z@skip
     \let\QCTOptA\empty
     \let\QCTOptB\empty
     \let\QCBOptA\empty
     \let\QCBOptB\empty
	 \vbox\bgroup
        \ifOverFrame 
           #5\QCTOptA\par
        \fi
        \GRAPHIC{#4}{#3}{#1}{#2}{\z@}%
        \ifUnderFrame 
           \break#5\QCBOptA
        \fi
	 \egroup
  \egroup
  \vspace\topsep
  \break
}%
%
\def\FFRAME#1#2#3#4#5#6#7{%
  \@ifundefined{floatstyle}
    {
     \begin{figure}[#1]%
    }
    {
	 \ifx#1h
      \begin{figure}[H]%
	 \else
      \begin{figure}[#1]%
	 \fi
	}
  \let\QCTOptA\empty
  \let\QCTOptB\empty
  \let\QCBOptA\empty
  \let\QCBOptB\empty
  \ifOverFrame
    #4
    \ifx\QCTOptA\empty
    \else
      \ifx\QCTOptB\empty
        \caption{\QCTOptA}%
      \else
        \caption[\QCTOptB]{\QCTOptA}%
      \fi
    \fi
    \ifUnderFrame\else
      \label{#5}%
    \fi
  \else
    \UnderFrametrue%
  \fi
  \begin{center}\GRAPHIC{#7}{#6}{#2}{#3}{\z@}\end{center}%
  \ifUnderFrame
    #4
    \ifx\QCBOptA\empty
      \caption{}%
    \else
      \ifx\QCBOptB\empty
        \caption{\QCBOptA}%
      \else
        \caption[\QCBOptB]{\QCBOptA}%
      \fi
    \fi
    \label{#5}%
  \fi
  \end{figure}%
 }%
%
%
%
%
%
\newcount\dispkind%

\def\makeactives{
  \catcode`\"=\active
  \catcode`\;=\active
  \catcode`\:=\active
  \catcode`\'=\active
  \catcode`\~=\active
}
\bgroup
   \makeactives
   \gdef\activesoff{%
      \def"{\string"}%
      \def;{\string;}%
      \def:{\string:}%
      \def'{\string'}%
      \def~{\string~}%
    }
\egroup

\def\FRAME#1#2#3#4#5#6#7#8{%
 \bgroup
 \ifnum\@msidraft=\@ne
   \wasdrafttrue
 \else
   \wasdraftfalse%
 \fi
 \def\LaTeXparams{}%
 \dispkind=\z@
 \def\LaTeXparams{}%
 \doFRAMEparams{#1}%
 \ifnum\dispkind=\z@\IFRAME{#2}{#3}{#4}{#7}{#8}{#5}\else
  \ifnum\dispkind=\@ne\DFRAME{#2}{#3}{#7}{#8}{#5}\else
   \ifnum\dispkind=\tw@
    \edef\@tempa{\noexpand\FFRAME{\LaTeXparams}}%
    \@tempa{#2}{#3}{#5}{#6}{#7}{#8}%
    \fi
   \fi
  \fi
  \ifwasdraft\@msidraft=1\else\@msidraft=0\fi{}%
  \egroup
 }%
%

\def\TEXUX#1{"texux"}

%
%
%
%
%
%
%
%
%
%

%
\long\def\QQQ#1#2{%
     \long\expandafter\def\csname#1\endcsname{#2}}%
\@ifundefined{QTP}{\def\QTP#1{}}{}
\@ifundefined{QEXCLUDE}{\def\QEXCLUDE#1{}}{}
\@ifundefined{Qlb}{}{}
\@ifundefined{Qlt}{}{}
\long\def\QQA#1#2{}%
\def\QTR#1#2{{\csname#1\endcsname {#2}}}%
\def\EXPAND#1[#2]#3{}%
\def\NOEXPAND#1[#2]#3{}%
\def\LaTeXparent#1{}%
\def\ChildStyles#1{}%
\def\ChildDefaults#1{}%
\def\QTagDef#1#2#3{}%

\@ifundefined{correctchoice}{}{}
\@ifundefined{HTML}{\def\HTML#1{\relax}}{}
\@ifundefined{TCIIcon}{\def\TCIIcon#1#2#3#4{\relax}}{}
\if@compatibility
  \typeout{Not defining UNICODE  U or CustomNote commands for LaTeX 2.09.}
\else
  \providecommand{\UNICODE}[2][]{\protect\rule{.1in}{.1in}}
  \providecommand{\U}[1]{\protect\rule{.1in}{.1in}}
  
\fi

\@ifundefined{lambdabar}{
      
   }{}

%
\@ifundefined{StyleEditBeginDoc}{}{}
%
\def\QQfnmark#1{\footnotemark}

%
%
\@ifundefined{TCIMAKEINDEX}{}{\makeindex}%
%
\@ifundefined{abstract}{%
 \def\abstract{%
  \if@twocolumn
   \section*{Abstract (Not appropriate in this style!)}%
   \else \small 
   \begin{center}{\bf Abstract\vspace{-.5em}\vspace{\z@}}\end{center}%
   \quotation 
   \fi
  }%
 }{%
 }%
\@ifundefined{endabstract}{\def\endabstract
  {\if@twocolumn\else\endquotation\fi}}{}%
\@ifundefined{maketitle}{\def\maketitle#1{}}{}%
\@ifundefined{affiliation}{\def\affiliation#1{}}{}%
\@ifundefined{proof}{}{}%
\@ifundefined{endproof}{}{}%
\@ifundefined{newfield}{\def\newfield#1#2{}}{}%
\@ifundefined{chapter}{\def\chapter#1{\par(Chapter head:)#1\par }%
 \newcount\c@chapter}{}%
\@ifundefined{part}{\def\part#1{\par(Part head:)#1\par }}{}%
\@ifundefined{section}{\def\section#1{\par(Section head:)#1\par }}{}%
\@ifundefined{subsection}{\def\subsection#1%
 {\par(Subsection head:)#1\par }}{}%
\@ifundefined{subsubsection}{\def\subsubsection#1%
 {\par(Subsubsection head:)#1\par }}{}%
\@ifundefined{paragraph}{\def\paragraph#1%
 {\par(Subsubsubsection head:)#1\par }}{}%
\@ifundefined{subparagraph}{\def\subparagraph#1%
 {\par(Subsubsubsubsection head:)#1\par }}{}%
\@ifundefined{therefore}{}{}%
\@ifundefined{backepsilon}{}{}%
\@ifundefined{yen}{}{}%
\@ifundefined{registered}{%
   \def\registered{\relax\ifmmode{}\r@gistered
                    \else$\m@th\r@gistered$\fi}%
 \def\r@gistered{^{\ooalign
  {\hfil\raise.07ex\hbox{$\scriptstyle\rm\text{R}$}\hfil\crcr
  \mathhexbox20D}}}}{}%
\@ifundefined{Eth}{}{}%
\@ifundefined{eth}{}{}%
\@ifundefined{Thorn}{}{}%
\@ifundefined{thorn}{}{}%
%
\@ifundefined{degree}{}{}%
%
\newdimen\theight
\@ifundefined{Column}{\def\Column{%
 \vadjust{\setbox\z@=\hbox{\scriptsize\quad\quad tcol}%
  \theight=\ht\z@\advance\theight by \dp\z@\advance\theight by \lineskip
  \kern -\theight \vbox to \theight{%
   \rightline{\rlap{\box\z@}}%
   \vss
   }%
  }%
 }}{}%
\@ifundefined{qed}{\def\qed{%
 \ifhmode\unskip\nobreak\fi\ifmmode\ifinner\else\hskip5\p@\fi\fi
 \hbox{\hskip5\p@\vrule width4\p@ height6\p@ depth1.5\p@\hskip\p@}%
 }}{}%
\@ifundefined{cents}{}{}%
\@ifundefined{tciLaplace}{}{}%
\@ifundefined{tciFourier}{}{}%
\@ifundefined{textcurrency}{}{}%
\@ifundefined{texteuro}{}{}%
\@ifundefined{euro}{}{}%
\@ifundefined{textfranc}{}{}%
\@ifundefined{textlira}{}{}%
\@ifundefined{textpeseta}{}{}%
\@ifundefined{miss}{\def\miss{\hbox{\vrule height2\p@ width 2\p@ depth\z@}}}{}%
\@ifundefined{vvert}{}{}
\@ifundefined{tcol}{\def\tcol#1{{\baselineskip=6\p@ \vcenter{#1}} \Column}}{}%
\@ifundefined{dB}{}{}
\@ifundefined{mB}{}{}
\@ifundefined{nB}{}{}
\@ifundefined{note}{}{}%
\def\newfmtname{LaTeX2e}
%
\ifx\fmtname\newfmtname
  \DeclareOldFontCommand{\rm}{\normalfont\rmfamily}{\mathrm}
  \DeclareOldFontCommand{\sf}{\normalfont\sffamily}{\mathsf}
  \DeclareOldFontCommand{\tt}{\normalfont\ttfamily}{\mathtt}
  \DeclareOldFontCommand{\bf}{\normalfont\bfseries}{\mathbf}
  \DeclareOldFontCommand{\it}{\normalfont\itshape}{\mathit}
  \DeclareOldFontCommand{\sl}{\normalfont\slshape}{\@nomath\sl}
  \DeclareOldFontCommand{\sc}{\normalfont\scshape}{\@nomath\sc}
\fi

%

\def\alpha{{\Greekmath 010B}}%
\def\beta{{\Greekmath 010C}}%
\def\gamma{{\Greekmath 010D}}%
\def\delta{{\Greekmath 010E}}%
\def\epsilon{{\Greekmath 010F}}%
\def\zeta{{\Greekmath 0110}}%
\def\eta{{\Greekmath 0111}}%
\def\theta{{\Greekmath 0112}}%
\def\iota{{\Greekmath 0113}}%
\def\kappa{{\Greekmath 0114}}%
\def\lambda{{\Greekmath 0115}}%
\def\mu{{\Greekmath 0116}}%
\def\nu{{\Greekmath 0117}}%
\def\xi{{\Greekmath 0118}}%
\def\pi{{\Greekmath 0119}}%
\def\rho{{\Greekmath 011A}}%
\def\sigma{{\Greekmath 011B}}%
\def\tau{{\Greekmath 011C}}%
\def\upsilon{{\Greekmath 011D}}%
\def\phi{{\Greekmath 011E}}%
\def\chi{{\Greekmath 011F}}%
\def\psi{{\Greekmath 0120}}%
\def\omega{{\Greekmath 0121}}%
\def\varepsilon{{\Greekmath 0122}}%
\def\vartheta{{\Greekmath 0123}}%
\def\varpi{{\Greekmath 0124}}%
\def\varrho{{\Greekmath 0125}}%
\def\varsigma{{\Greekmath 0126}}%
\def\varphi{{\Greekmath 0127}}%

\def\nabla{{\Greekmath 0272}}
\def\FindBoldGroup{%
   {\setbox0=\hbox{$\mathbf{x\global\edef\theboldgroup{\the\mathgroup}}$}}%
}

\def\Greekmath#1#2#3#4{%
    \if@compatibility
        \ifnum\mathgroup=\symbold
           \mathchoice{\mbox{\boldmath$\displaystyle\mathchar"#1#2#3#4$}}%
                      {\mbox{\boldmath$\textstyle\mathchar"#1#2#3#4$}}%
                      {\mbox{\boldmath$\scriptstyle\mathchar"#1#2#3#4$}}%
                      {\mbox{\boldmath$\scriptscriptstyle\mathchar"#1#2#3#4$}}%
        \else
           \mathchar"#1#2#3#4%
        \fi 
    \else 
        \FindBoldGroup
        \ifnum\mathgroup=\theboldgroup 
           \mathchoice{\mbox{\boldmath$\displaystyle\mathchar"#1#2#3#4$}}%
                      {\mbox{\boldmath$\textstyle\mathchar"#1#2#3#4$}}%
                      {\mbox{\boldmath$\scriptstyle\mathchar"#1#2#3#4$}}%
                      {\mbox{\boldmath$\scriptscriptstyle\mathchar"#1#2#3#4$}}%
        \else
           \mathchar"#1#2#3#4%
        \fi     	    
	  \fi}

\newif\ifGreekBold  \GreekBoldfalse
\let\SAVEPBF=\pbf
\def\pbf{\GreekBoldtrue\SAVEPBF}%

\@ifundefined{theorem}{\newtheorem{theorem}{Theorem}}{}
\@ifundefined{lemma}{}{}
\@ifundefined{corollary}{}{}
\@ifundefined{conjecture}{}{}
\@ifundefined{proposition}{\newtheorem{proposition}[theorem]{Proposition}}{}
\@ifundefined{axiom}{}{}
\@ifundefined{remark}{}{}
\@ifundefined{example}{}{}
\@ifundefined{exercise}{}{}
\@ifundefined{definition}{}{}

\@ifundefined{mathletters}{%
  \newcounter{equationnumber}  
  \def\mathletters{%
     \addtocounter{equation}{1}
     \edef\@currentlabel{\theequation}%
     \setcounter{equationnumber}{\c@equation}
     \setcounter{equation}{0}%
     \edef\theequation{\@currentlabel\noexpand\alph{equation}}%
  }
  
}{}

\@ifundefined{BibTeX}{%
    \def\BibTeX{{\rm B\kern-.05em{\sc i\kern-.025em b}\kern-.08em
                 T\kern-.1667em\lower.7ex\hbox{E}\kern-.125emX}}}{}%
\@ifundefined{AmS}%
    {\def\AmS{{\protect\usefont{OMS}{cmsy}{m}{n}%
                A\kern-.1667em\lower.5ex\hbox{M}\kern-.125emS}}}{}%
\@ifundefined{AmSTeX}{}{}%
%

\def\@@eqncr{\let\@tempa\relax
    \ifcase\@eqcnt \def\@tempa{& & &}\or \def\@tempa{& &}%
      \else \def\@tempa{&}\fi
     \@tempa
     \if@eqnsw
        \iftag@
           \@taggnum
        \else
           \@eqnnum\stepcounter{equation}%
        \fi
     \fi
     \global\tag@false
     \global\@eqnswtrue
     \global\@eqcnt\z@\cr}

\def\TCItag{\@ifnextchar*{\@TCItagstar}{\@TCItag}}
\def\@TCItag#1{%
    \global\tag@true
    \global\def\@taggnum{(#1)}%
    \global\def\@currentlabel{#1}}
\def\@TCItagstar*#1{%
    \global\tag@true
    \global\def\@taggnum{#1}%
    \global\def\@currentlabel{#1}}
%
%
%
%
%
%
%
%
%
%
%
%
%
%
%
%
%
%
%

\def\tint{\msi@int\textstyle\int}%
\def\tiint{\msi@int\textstyle\iint}%
\def\tiiint{\msi@int\textstyle\iiint}%
\def\tiiiint{\msi@int\textstyle\iiiint}%
\def\tidotsint{\msi@int\textstyle\idotsint}%
\def\toint{\msi@int\textstyle\oint}%

%
%
%
%
%
%
%
%
%
%
%
%
%
%
%

\newtoks\temptoksa
\newtoks\temptoksb
\newtoks\temptoksc

\def\msi@int#1#2{%
 \def\@temp{{#1#2\the\temptoksc_{\the\temptoksa}^{\the\temptoksb}}}%
 \futurelet\@nextcs
 \@int
}

\def\@int{%
   \ifx\@nextcs\limits
      \typeout{Found limits}%
      \temptoksc={\limits}%
	  \let\@next\@intgobble%
   \else\ifx\@nextcs\nolimits
      \typeout{Found nolimits}%
      \temptoksc={\nolimits}%
	  \let\@next\@intgobble%
   \else
      \typeout{Did not find limits or no limits}%
      \temptoksc={}%
      \let\@next\msi@limits%
   \fi\fi
   \@next   
}%

\def\@intgobble#1{%
   \typeout{arg is #1}%
   \msi@limits
}

\def\msi@limits{%
   \temptoksa={}%
   \temptoksb={}%
   \@ifnextchar_{\@limitsa}{\@limitsb}%
}

\def\@limitsa_#1{%
   \temptoksa={#1}%
   \@ifnextchar^{\@limitsc}{\@temp}%
}

\def\@limitsb{%
   \@ifnextchar^{\@limitsc}{\@temp}%
}

\def\@limitsc^#1{%
   \temptoksb={#1}%
   \@ifnextchar_{\@limitsd}{\@temp}%
}

\def\@limitsd_#1{%
   \temptoksa={#1}%
   \@temp
}

\def\dint{\msi@int\displaystyle\int}%
\def\diint{\msi@int\displaystyle\iint}%
\def\diiint{\msi@int\displaystyle\iiint}%
\def\diiiint{\msi@int\displaystyle\iiiint}%
\def\didotsint{\msi@int\displaystyle\idotsint}%
\def\doint{\msi@int\displaystyle\oint}%

\if@compatibility\else
  \RequirePackage{amsmath}
\fi

\def\ExitTCILatex{\makeatother }

\bgroup
\ifx\ds@amstex\relax
   \message{amstex already loaded}\aftergroup\ExitTCILatex
\else
   \@ifpackageloaded{amsmath}%
      {\if@compatibility\message{amsmath already loaded}\fi\aftergroup\ExitTCILatex}
      {}
   \@ifpackageloaded{amstex}%
      {\if@compatibility\message{amstex already loaded}\fi\aftergroup\ExitTCILatex}
      {}
   \@ifpackageloaded{amsgen}%
      {\if@compatibility\message{amsgen already loaded}\fi\aftergroup\ExitTCILatex}
      {}
\fi
\egroup


\typeout{TCILATEX defining AMS-like constructs in LaTeX 2.09 COMPATIBILITY MODE}
%
%
\let\DOTSI\relax
\def\RIfM@{\relax\ifmmode}%
\def\FN@{\futurelet\next}%
\newcount\intno@
\def\iint{\DOTSI\intno@\tw@\FN@\ints@}%
\def\iiint{\DOTSI\intno@\thr@@\FN@\ints@}%
\def\iiiint{\DOTSI\intno@4 \FN@\ints@}%
\def\idotsint{\DOTSI\intno@\z@\FN@\ints@}%
\def\ints@{\findlimits@\ints@@}%
\newif\iflimtoken@
\newif\iflimits@
\def\findlimits@{\limtoken@true\ifx\next\limits\limits@true
 \else\ifx\next\nolimits\limits@false\else
 \limtoken@false\ifx\ilimits@\nolimits\limits@false\else
 \ifinner\limits@false\else\limits@true\fi\fi\fi\fi}%
\def\multint@{\int\ifnum\intno@=\z@\intdots@                          
 \else\intkern@\fi                                                    
 \ifnum\intno@>\tw@\int\intkern@\fi                                   
 \ifnum\intno@>\thr@@\int\intkern@\fi                                 
 \int}
\def\multintlimits@{\intop\ifnum\intno@=\z@\intdots@\else\intkern@\fi
 \ifnum\intno@>\tw@\intop\intkern@\fi
 \ifnum\intno@>\thr@@\intop\intkern@\fi\intop}%
\def\intic@{%
    \mathchoice{\hskip.5em}{\hskip.4em}{\hskip.4em}{\hskip.4em}}%
\def\negintic@{\mathchoice
 {\hskip-.5em}{\hskip-.4em}{\hskip-.4em}{\hskip-.4em}}%
\def\ints@@{\iflimtoken@                                              
 \def\ints@@@{\iflimits@\negintic@
   \mathop{\intic@\multintlimits@}\limits                             
  \else\multint@\nolimits\fi                                          
  \eat@}
 \else                                                                
 \def\ints@@@{\iflimits@\negintic@
  \mathop{\intic@\multintlimits@}\limits\else
  \multint@\nolimits\fi}\fi\ints@@@}%
\def\intkern@{\mathchoice{\!\!\!}{\!\!}{\!\!}{\!\!}}%
\def\plaincdots@{\mathinner{\cdotp\cdotp\cdotp}}%
\def\intdots@{\mathchoice{\plaincdots@}%
 {{\cdotp}\mkern1.5mu{\cdotp}\mkern1.5mu{\cdotp}}%
 {{\cdotp}\mkern1mu{\cdotp}\mkern1mu{\cdotp}}%
 {{\cdotp}\mkern1mu{\cdotp}\mkern1mu{\cdotp}}}%
%
%
%
\def\RIfM@{\relax\protect\ifmmode}
\def\text{\RIfM@\expandafter\text@\else\expandafter\mbox\fi}
\let\nfss@text\text
\def\text@#1{\mathchoice
   {\textdef@\displaystyle\f@size{#1}}%
   {\textdef@\textstyle\tf@size{\firstchoice@false #1}}%
   {\textdef@\textstyle\sf@size{\firstchoice@false #1}}%
   {\textdef@\textstyle \ssf@size{\firstchoice@false #1}}%
   \glb@settings}

\def\textdef@#1#2#3{\hbox{{%
                    \everymath{#1}%
                    \let\f@size#2\selectfont
                    #3}}}
\newif\iffirstchoice@
\firstchoice@true
%
%
\def\Let@{\relax\iffalse{\fi\let\\=\cr\iffalse}\fi}%
\def\vspace@{\def\vspace##1{\crcr\noalign{\vskip##1\relax}}}%
\def\multilimits@{\bgroup\vspace@\Let@
 \baselineskip\fontdimen10 \scriptfont\tw@
 \advance\baselineskip\fontdimen12 \scriptfont\tw@
 \lineskip\thr@@\fontdimen8 \scriptfont\thr@@
 \lineskiplimit\lineskip
 \vbox\bgroup\ialign\bgroup\hfil$\m@th\scriptstyle{##}$\hfil\crcr}%
\def\Sb{_\multilimits@}%
\def\endSb{\crcr\egroup\egroup\egroup}%
\def\Sp{^\multilimits@}%

%
%
%
\newdimen\ex@
\ex@.2326ex
\def\rightarrowfill@#1{$#1\m@th\mathord-\mkern-6mu\cleaders
 \hbox{$#1\mkern-2mu\mathord-\mkern-2mu$}\hfill
 \mkern-6mu\mathord\rightarrow$}%
\def\leftarrowfill@#1{$#1\m@th\mathord\leftarrow\mkern-6mu\cleaders
 \hbox{$#1\mkern-2mu\mathord-\mkern-2mu$}\hfill\mkern-6mu\mathord-$}%
\def\leftrightarrowfill@#1{$#1\m@th\mathord\leftarrow
\mkern-6mu\cleaders
 \hbox{$#1\mkern-2mu\mathord-\mkern-2mu$}\hfill
 \mkern-6mu\mathord\rightarrow$}%
\def\overrightarrow{\mathpalette\overrightarrow@}%
\def\overrightarrow@#1#2{\vbox{\ialign{##\crcr\rightarrowfill@#1\crcr
 \noalign{\kern-\ex@\nointerlineskip}$\m@th\hfil#1#2\hfil$\crcr}}}%

\def\overleftarrow{\mathpalette\overleftarrow@}%
\def\overleftarrow@#1#2{\vbox{\ialign{##\crcr\leftarrowfill@#1\crcr
 \noalign{\kern-\ex@\nointerlineskip}$\m@th\hfil#1#2\hfil$\crcr}}}%
\def\overleftrightarrow{\mathpalette\overleftrightarrow@}%
\def\overleftrightarrow@#1#2{\vbox{\ialign{##\crcr
   \leftrightarrowfill@#1\crcr
 \noalign{\kern-\ex@\nointerlineskip}$\m@th\hfil#1#2\hfil$\crcr}}}%
\def\underrightarrow{\mathpalette\underrightarrow@}%
\def\underrightarrow@#1#2{\vtop{\ialign{##\crcr$\m@th\hfil#1#2\hfil
  $\crcr\noalign{\nointerlineskip}\rightarrowfill@#1\crcr}}}%

\def\underleftarrow{\mathpalette\underleftarrow@}%
\def\underleftarrow@#1#2{\vtop{\ialign{##\crcr$\m@th\hfil#1#2\hfil
  $\crcr\noalign{\nointerlineskip}\leftarrowfill@#1\crcr}}}%
\def\underleftrightarrow{\mathpalette\underleftrightarrow@}%
\def\underleftrightarrow@#1#2{\vtop{\ialign{##\crcr$\m@th
  \hfil#1#2\hfil$\crcr
 \noalign{\nointerlineskip}\leftrightarrowfill@#1\crcr}}}%

\def\qopnamewl@#1{\mathop{\operator@font#1}\nlimits@}
\let\nlimits@\displaylimits
\def\setboxz@h{\setbox\z@\hbox}

\def\varlim@#1#2{\mathop{\vtop{\ialign{##\crcr
 \hfil$#1\m@th\operator@font lim$\hfil\crcr
 \noalign{\nointerlineskip}#2#1\crcr
 \noalign{\nointerlineskip\kern-\ex@}\crcr}}}}

 \def\rightarrowfill@#1{\m@th\setboxz@h{$#1-$}\ht\z@\z@
  $#1\copy\z@\mkern-6mu\cleaders
  \hbox{$#1\mkern-2mu\box\z@\mkern-2mu$}\hfill
  \mkern-6mu\mathord\rightarrow$}
\def\leftarrowfill@#1{\m@th\setboxz@h{$#1-$}\ht\z@\z@
  $#1\mathord\leftarrow\mkern-6mu\cleaders
  \hbox{$#1\mkern-2mu\copy\z@\mkern-2mu$}\hfill
  \mkern-6mu\box\z@$}

\def\projlim{\qopnamewl@{proj\,lim}}
\def\injlim{\qopnamewl@{inj\,lim}}
\def\varinjlim{\mathpalette\varlim@\rightarrowfill@}
\def\varprojlim{\mathpalette\varlim@\leftarrowfill@}
\def\varliminf{\mathpalette\varliminf@{}}
\def\varliminf@#1{\mathop{\underline{\vrule\@depth.2\ex@\@width\z@
   \hbox{$#1\m@th\operator@font lim$}}}}
\def\varlimsup{\mathpalette\varlimsup@{}}
\def\varlimsup@#1{\mathop{\overline
  {\hbox{$#1\m@th\operator@font lim$}}}}

%
%
%
%
%
%
\begingroup \catcode `|=0 \catcode `[= 1
\catcode`]=2 \catcode `\{=12 \catcode `\}=12
\catcode`\\=12 
|gdef|@alignverbatim#1\end{align}[#1|end[align]]
|gdef|@salignverbatim#1\end{align*}[#1|end[align*]]

|gdef|@alignatverbatim#1\end{alignat}[#1|end[alignat]]
|gdef|@salignatverbatim#1\end{alignat*}[#1|end[alignat*]]

|gdef|@xalignatverbatim#1\end{xalignat}[#1|end[xalignat]]
|gdef|@sxalignatverbatim#1\end{xalignat*}[#1|end[xalignat*]]

|gdef|@gatherverbatim#1\end{gather}[#1|end[gather]]
|gdef|@sgatherverbatim#1\end{gather*}[#1|end[gather*]]

|gdef|@gatherverbatim#1\end{gather}[#1|end[gather]]
|gdef|@sgatherverbatim#1\end{gather*}[#1|end[gather*]]

|gdef|@multilineverbatim#1\end{multiline}[#1|end[multiline]]
|gdef|@smultilineverbatim#1\end{multiline*}[#1|end[multiline*]]

|gdef|@arraxverbatim#1\end{arrax}[#1|end[arrax]]
|gdef|@sarraxverbatim#1\end{arrax*}[#1|end[arrax*]]

|gdef|@tabulaxverbatim#1\end{tabulax}[#1|end[tabulax]]
|gdef|@stabulaxverbatim#1\end{tabulax*}[#1|end[tabulax*]]

|endgroup

\def\align{\@verbatim \frenchspacing\@vobeyspaces \@alignverbatim
You are using the "align" environment in a style in which it is not defined.}

\@namedef{align*}{\@verbatim\@salignverbatim
You are using the "align*" environment in a style in which it is not defined.}
\expandafter\let\csname endalign*\endcsname =\endtrivlist

\def\alignat{\@verbatim \frenchspacing\@vobeyspaces \@alignatverbatim
You are using the "alignat" environment in a style in which it is not defined.}

\@namedef{alignat*}{\@verbatim\@salignatverbatim
You are using the "alignat*" environment in a style in which it is not defined.}
\expandafter\let\csname endalignat*\endcsname =\endtrivlist

\def\xalignat{\@verbatim \frenchspacing\@vobeyspaces \@xalignatverbatim
You are using the "xalignat" environment in a style in which it is not defined.}

\@namedef{xalignat*}{\@verbatim\@sxalignatverbatim
You are using the "xalignat*" environment in a style in which it is not defined.}
\expandafter\let\csname endxalignat*\endcsname =\endtrivlist

\def\gather{\@verbatim \frenchspacing\@vobeyspaces \@gatherverbatim
You are using the "gather" environment in a style in which it is not defined.}

\@namedef{gather*}{\@verbatim\@sgatherverbatim
You are using the "gather*" environment in a style in which it is not defined.}
\expandafter\let\csname endgather*\endcsname =\endtrivlist

\def\multiline{\@verbatim \frenchspacing\@vobeyspaces \@multilineverbatim
You are using the "multiline" environment in a style in which it is not defined.}

\@namedef{multiline*}{\@verbatim\@smultilineverbatim
You are using the "multiline*" environment in a style in which it is not defined.}
\expandafter\let\csname endmultiline*\endcsname =\endtrivlist

\def\arrax{\@verbatim \frenchspacing\@vobeyspaces \@arraxverbatim
You are using a type of "array" construct that is only allowed in AmS-LaTeX.}

\def\tabulax{\@verbatim \frenchspacing\@vobeyspaces \@tabulaxverbatim
You are using a type of "tabular" construct that is only allowed in AmS-LaTeX.}

\@namedef{arrax*}{\@verbatim\@sarraxverbatim
You are using a type of "array*" construct that is only allowed in AmS-LaTeX.}
\expandafter\let\csname endarrax*\endcsname =\endtrivlist

\@namedef{tabulax*}{\@verbatim\@stabulaxverbatim
You are using a type of "tabular*" construct that is only allowed in AmS-LaTeX.}
\expandafter\let\csname endtabulax*\endcsname =\endtrivlist


 \def\endequation{%
     \ifmmode\ifinner 
      \iftag@
        \addtocounter{equation}{-1} 
        $\hfil
           \displaywidth\linewidth\@taggnum\egroup \endtrivlist
        \global\tag@false
        \global\@ignoretrue   
      \else
        $\hfil
           \displaywidth\linewidth\@eqnnum\egroup \endtrivlist
        \global\tag@false
        \global\@ignoretrue 
      \fi
     \else   
      \iftag@
        \addtocounter{equation}{-1} 
        \eqno \hbox{\@taggnum}
        \global\tag@false%
        $$\global\@ignoretrue
      \else
        \eqno \hbox{\@eqnnum}
        $$\global\@ignoretrue
      \fi
     \fi\fi
 } 

 \newif\iftag@ \tag@false
 
 \def\TCItag{\@ifnextchar*{\@TCItagstar}{\@TCItag}}
 \def\@TCItag#1{%
     \global\tag@true
     \global\def\@taggnum{(#1)}%
     \global\def\@currentlabel{#1}}
 \def\@TCItagstar*#1{%
     \global\tag@true
     \global\def\@taggnum{#1}%
     \global\def\@currentlabel{#1}}

  \@ifundefined{tag}{
     \def\tag{\@ifnextchar*{\@tagstar}{\@tag}}
     \def\@tag#1{%
         \global\tag@true
         \global\def\@taggnum{(#1)}}
     \def\@tagstar*#1{%
         \global\tag@true
         \global\def\@taggnum{#1}}
  }{}

%
%
%
%
%

\makeatother

\begin{document}

\def\spacingset#1{\renewcommand{\baselinestretch}%
{#1}\small\normalsize} \spacingset{1}


\if1\blind
{
\title{\textbf{Fr\'{e}chet Sufficient Dimension Reduction for Random Objects}}
\author{ Chao Ying and Zhou Yu \vspace{0.08in}\\
School of Statistics, East China
Normal University }
\maketitle
}
\fi

\if0\blind
{
  \bigskip
  \bigskip
  \bigskip
  \begin{center}
    {\LARGE\bf Fr\'{e}chet Sufficient Dimension Reduction for Random Objects}
\end{center}
  \medskip
} \fi

\begin{abstract}
We in this paper consider Fr\'echet sufficient dimension reduction
with responses being complex random objects in a metric space and high dimension Euclidean predictors. We propose a novel approach called weighted inverse regression ensemble method for linear Fr\'echet sufficient dimension reduction. The method is further generalized as a new operator
defined on reproducing kernel Hilbert spaces for nonlinear Fr\'echet sufficient dimension reduction. We provide theoretical guarantees for the new method via asymptotic analysis. Intensive simulation studies verify the performance of our proposals. And we apply our methods to analyze the handwritten digits data to demonstrate its use in real applications.
\end{abstract}

\noindent
\textit{Keywords:} Metric Space; Sliced Inverse Regression; Sufficient Dimension Reduction

\spacingset{1.45} 
\section{Introduction}
Sufficient Dimension Reduction (\cite{SIR1991, Cook:1998}), as a powerful tool to extract the core information hidden in the high-dimensional data, has become an important and rapidly developing research field. For regression with multiple responses $Y \in \mathbb{R}^q$ and multiple predictors $X \in \mathbb{R}^p$, classical linear sufficient dimension reduction seeks a $p\times d$ matrix $\beta$ such that
\begin{align}\label{linear SDR}
Y\indep X \mid \beta^T X,
\end{align}
where $\indep$ stands for independence. The smallest subspace (\cite{YINLICOOK2008}) spanned by $\beta$ with $\beta$ satisfying the above relation (\ref{linear SDR}) is called the central subspace, which is denoted as $\mathcal{S}_{Y|X}$.

Classical methods for identifying the central subspace with one dimensional response include sliced inverse regression (\cite{SIR1991}), sliced average variance estimation (\cite{Cook1991}), the central $k$th moment method (\cite{YINCOOK2002}), the inverse third moment approach (\cite{Yin2003}), contour regression (\cite{Li2005}), directional regression (\cite{DR2007}), the constructive approach (\cite{CONSTRUCTIVEMAVE}), the semiparametric estimation (\cite{SEMISDR2012,EFFICIENTSDR2013}), and many others. \cite{Li2003}, \cite{Zhu2010}, \cite{Li2008} and \cite{Zhu2010} made important extensions for sufficient dimension reduction with multivariate response.

\cite{PSVM2011}, \cite{NonlinearSDR} and \cite{Li:2018} further articulated the general formulation of nonlinear sufficient
dimension reduction as
\begin{align}\label{nonlinear SDR}
Y\indep X \mid  f(X),
\end{align}
where $f:\mathbb{R}^p \mapsto \mathbb{R}^d $ is an unknown vector-valued function of $X$. Nonlinear sufficient dimension reduction actually replaces the linear sufficient predictor $\beta^T X$ by a nonlinear predictor $f(X)$ . The smallest subspace spanned by the functions satisfying the relation (\ref{nonlinear SDR})  is called the central class and denoted as $\mathcal{G}_{Y|X}$. See \cite{NonlinearSDR} and \cite{Li:2018} for more details.

Due to the rapid development of data collection technologies, statisticians nowadays are more frequently encountering complex data that are non-Euclidean and specially do not lie in a vector space. Images (\cite{Peyre2009, Gonzalez2018}), shapes (\cite{Small1996, Simeoni2013}), graphs (\cite{Tsochantaridis2004, Ferretti2018}), tensors (\cite{Zhu2009, Li2017}), random densities (\cite{Petersen2016, Liu2019}) are examples of complex data types that appear naturally as responses in image completion, computer vision, biomedical analysis, signal processing and other application areas.
In particular, in image completion for handwritten digits ({\cite{Tsagkrasoulis2018}}), the upper part of each image was taken as the predictors $X$, and the  bottom half was set as the responses $Y$. Figure 1 in the following illustrates the idea of such image analysis for digits $\{0,8,9\}$.  To predict the bottom half of handwritten digits from their upper half is not an easy task, as the upper parts of image digits $\{0,8,9\}$ are quite similar to each other. 
In image analysis, it is common to assume that the images lie on an unknown manifold equipped with a meaningful distance metric. Then it is of great interest to develop general Fr\'echet sufficient dimension reduction method with metric space valued responses. Fr\'echet sufficient dimension reduction for such $X$ and $Y$ is then an immediate need that can facilitate graphical understanding of the regression structure, and is certainly helpful for further image clustering or classification and outlier diagnostics.

\begin{figure}[htbp]
    \centering
    \subfigure{
\includegraphics[height=1.3in,width=5in]{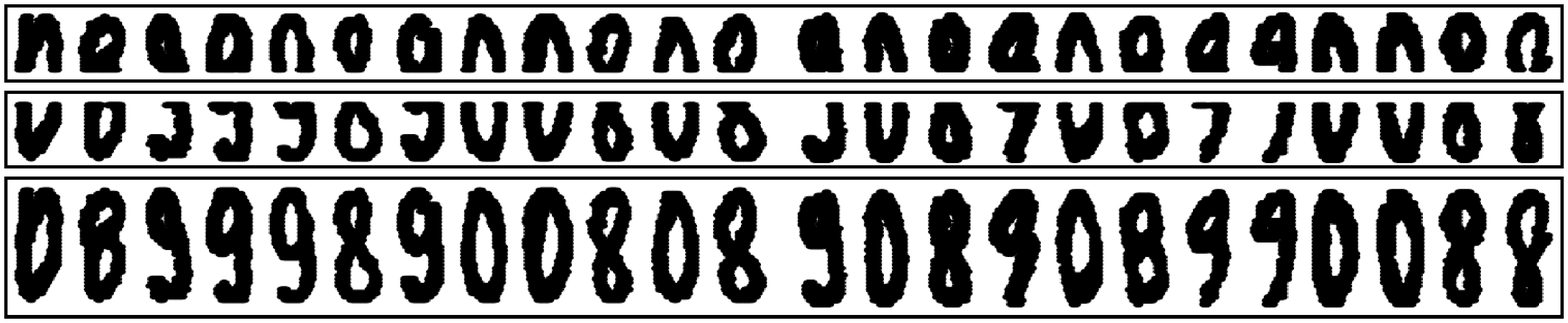}
    }
\caption{ The first row consists of the predictors $X$ which are the upper halves of the image digits $\{0,8,9\}$; The second row consists of the responses $Y$ which are the bottom halves of the image digits $\{0,8,9\}$; The third row consists of the whole image digits $\{0,8,9\}$.}
\end{figure}

\cite{Dubey2019} and \cite{Petersen2019b} provided some fundamental tools for Fr\'echet analysis of such random objects. \cite{Petersen2019a} further proposed a general global and local Fr\'echet regression paradigm for responses being complex random objects in a metric space with Euclidean predictors. Along their pioneering work in Fr\'echet analysis, it is then of great interest to consider linear and nonlinear sufficient dimension reduction for response objects in a metric space when the dimension of Euclidean predictors is relatively high.

As an illustration of Fr\'echet sufficient dimension reduction, we consider  two models:
\begin{align*}
(\text{i}). \quad Y=&(\sin(\beta_1^T X+\varepsilon_1)\sin(\beta_2^T X+\varepsilon_2),\sin(\beta_1^T X+\varepsilon_1)\cos(\beta_2^T X+\varepsilon_2),\cos(\beta_1^T X+\varepsilon_1)),\\
(\text{ii}). \quad Y=&(\sin(f_1(X)+\varepsilon_1)^{1/3},\cos(f_1(X)+\varepsilon_1)^{1/3}),
\end{align*}
where $(\varepsilon_1,\varepsilon_2)^T\sim N(0_2,I_2)$, $X=(x_1,\ldots,x_p)^T \sim N(0_p, I_p)$ with $p=30$, $f_1(X)=x_1^2+x_2^2$, $\beta_1=(0.5,0.5,0,\ldots,0)^T$, and $\beta_2=(0,\ldots,0,0.5,0.5)^T$. For models (i) and (ii), the responses lie on unit spheres. Linear Fr\'echet sufficient dimension reduction for model (i) aims at finding the central subspace $\mathcal{S}_{Y|X}$ with $d=2$, which is the column space spanned by $(\beta_1,\beta_2)$. And the purpose of nonlinear Fr\'echet sufficient dimension reduction for model (ii) is to identify the central class $\mathcal{G}_{Y|X}$ with $d=1$, which is comprised of all measurable functions of $f_1(X)$.

To address this issue, we in this paper propose a novel linear Fr\'echet sufficient dimension reduction method to recover the central subspace $\mathcal{S}_{Y|X}$  defined based on (\ref{linear SDR}) with metric space valued response $Y$. We also provide a consistent estimator of the structural dimension $d$, which is the dimension of the central subspace. The new method is further generalized to nonlinear Fr\'echet sufficient dimension reduction (\ref{nonlinear SDR}) via the reproducing kernel Hilbert space.
The proposed linear and nonlinear  Fr\'echet sufficient dimension reduction estimators are shown to be unbiased for the central subspace $\mathcal{S}_{Y|X}$ and the central class $\mathcal{G}_{Y|X}$ respectively.
Moreover, by taking advantage of the distance metric of the random objects,  both estimators require no numerical optimization or nonparametric smoothing because they can be easily implemented by spectral decomposition of linear operators. The asymptotic convergence results of our proposal are derived for theoretical justifications. We also examine our method via comprehensive simulation studies including responses that consist of probability distributions or lie on the sphere. And the application to the handwritten digits data demonstrates the practical value of our proposal.

\section{Linear Fr\'echet Sufficient Dimension Reduction}
\subsection{Weighted Inverse Regression Ensemble}
 Let $(\Omega, d)$ be a metric space. The linear Fr\'echet sufficient dimension consider the regression with response variable $Y\in \Omega$ and predictors $X\in \mathbb{R}^p$. Let $F$ be the joint distribution of $(X,Y)$ defined on $\mathbb{R}^p\times \Omega$.  And we assume that the conditional distributions $F_{Y|X}$ and $F_{X|Y}$ exist.

 With the linearity condition that $E(X\mid \beta^T X)$ is linear in $X$, \cite{SIR1991} discovered the fundamental property of sliced inverse regression
 \begin{align}\label{prop: sir 1}
 \Sigma^{-1}\{E(X\mid Y)-E(X)\} \in \mathcal{S}_{Y\mid X},
 \end{align}
 where $\Sigma=\mbox{var}(X)$. However, the inverse regression mean $E(X|Y)$ is difficult for us to estimate, as only distances between response objects can be computable for responses in metric space.

Our goal for linear Fr\'echet sufficient dimension is then to borrow the strength of sliced inverse regression without the estimation of the inverse regression function $E(X|Y)$. To introduce our new method, we first recall the martingale difference divergence (MDD) proposed by  \cite{MDD2014} for $Y\in \mathbb{R}^q$  and $X\in \mathbb{R}^p$, which is developed to measure the
conditional mean (in)dependence of $Y$ on $X$, i.e.
\begin{align*}
E(Y|X)=E(Y), \quad \mbox{almost surely}.
\end{align*}
To be specific, $\text{MDD}(Y|X)$ is defined as a nonnegative number that satisfies
\begin{align*}
\text{MDD}^2(Y|X)=- E\left[\{Y-E(Y)\}^T\{Y'-E (Y')\} \| X-X'\|\right],
\end{align*}
where $(X',Y')$ is an independent copy of $(X,Y)$, and $\|\cdot\|$ stands for the Euclidean distance.

To inherit the spirit of sliced inverse regression, we switch the roles of $X$ and $Y$ in martingale difference divergence, and define the following $p\times p$ matrix
\begin{align*}
\Lambda=- E\left[\{X-E(X)\}\{X'-E(X')\}^T d(Y,Y')\right],
\end{align*}
for $(X,Y)\in \mathbb{R}^p\times \Omega$. By the property of conditional expectation, we have
\begin{align}\label{Def: WIRE}
\Lambda=- E\left[E\{X-E(X)|Y\}E\{X'-E(X')|Y'\}^T d(Y,Y')\right].
\end{align}
Invoking the appealing property (\ref{prop: sir 1}) of sliced inverse regression, we see that
\begin{align*}
\Sigma^{-1}\Lambda=-\Sigma^{-1}E\left[E\{X-E(X)|Y\}E\{X'-E(X')|Y'\}^T d(Y,Y')\right]\in \mathcal{S}_{Y|X}.
\end{align*}
We summarize this property in the following proposition.

\begin{proposition}\label{property M1}  $\Lambda$ is positive semidefinite. Assume the linearity condition holds true, then
	$$\textup{Span}\left\{\Sigma^{-1}\Lambda \right\}\subseteq \mathcal{S}_{Y|X}.$$
\end{proposition}
From (\ref{Def: WIRE}), $\Lambda$ can be viewed as the weighted average ensemble of the inverse regression mean $E(X|Y)$, where the weight function is the distance $d(Y,Y')$. We thus call our new method as weighted inverse regression ensemble. The weighted inverse regression ensemble can also be applied for classical linear sufficient dimension reduction with $Y\in \mathbb{R}^q$ and $d(Y,Y')=\|Y-Y'\|$ being the Euclidean distance. Moreover, choosing the number of slices for sliced inverse regression is a longstanding issue in the literature. Compared to sliced inverse regression, our proposal is completely slicing free and is readily applicable to multivariate response data.

Let $M=\Sigma^{-1}\Lambda$ and $(\beta_1,\ldots,\beta_d)$ be the left singular vectors of $M$ corresponding to the $d$ largest singular values. Then  Proposition \ref{property M1} suggests that $(\beta_1,\ldots,\beta_d)$ provides a basis of $\mathcal{S}_{Y\mid X}$. Given a random sample $\{(X_i, Y_i), i=1,\ldots,n\}$ from $(X,Y)$, then $\mu=E(X)$ and $\Sigma=var(X)$ can be estimated as  $\hat \mu=E_n(X)$ and $\hat\Sigma=E_n\{(X-\hat\mu)(X-\hat \mu)^T\}$, where $E_n(\cdot)$ indicates the sample average $n^{-1}\sum_{i=1}^n (\cdot)$. Moreover, we can adopt U-statistics to estimate $\Lambda$ as
\begin{align*}
\hat{\Lambda}=-\sum_{1\le i\neq j\le n} (X_i-\hat{\mu})(X_j-\hat{\mu})^T d(Y_i,Y_j)/\{n(n-1)\}.
\end{align*}
Conduct singular value decomposition on $\hat{M}=\hat\Sigma^{-1}\hat{\Lambda}$. We then adopt the top $d$ left singular vectors $(\hat{\beta_1},\ldots,\hat{\beta}_d)$ of $\hat M$ to recover $\mathcal{S}_{Y|X}$ in  the sample level. And we introduce the following notations to present the central limit theory for the estimation of the central subspace.

\begin{align*}
&\Gamma(X)=(X-\mu)(X-\mu)^T-\Sigma,\quad \Lambda^{(1)}(X,Y,X', Y')=-(X-\mu)(X'-\mu)^T d(Y,Y'),\\
&\Lambda^{(1)}_{1}(X',Y')=E\{\Lambda^{(1)}(X,Y,X',Y')|X', Y'\},\quad
\vartheta=E\{(X-\mu)d(Y,Y')\},\\
&\Theta(X,Y)=\Lambda^{(1)}_{1}(X,Y)-\Lambda+(X-\mu)\vartheta^T+\vartheta(X-\mu)^T,\\
& \zeta_{\ell}(X,Y)=\Sigma^{-1}\Big\{\Theta(X,Y)\Lambda+\Lambda\Theta(X,Y)
-\Gamma(X)\Sigma^{-1}\Lambda\Lambda^T
-\Lambda\Lambda^T\Sigma^{-1}\Gamma(X)\Big\}\Sigma^{-1},\\
& \Upsilon_\ell(X,Y)=\sum_{j=1,j\neq \ell}^{p}\frac{\beta_j\beta_j^T\zeta_{\ell}(X,Y)\beta_\ell}{\lambda_j^2-\lambda_\ell^2},\quad \ell=1,\ldots,d.
\end{align*}

{\theorem\label{theo: asymptotic M} Assume the linearity condition and the singular values $\lambda_\ell$'s are distinct for $\ell=1,\ldots,d$. In addition, assume that $Ed^2(Y,Y') < \infty$ and $X$ has finite fourth moment, then
	\begin{eqnarray}
	n^{1/2}(\hat\beta_\ell-\beta_\ell)\overset{D}{\longrightarrow} N\left({0}_p, \Sigma_\ell\right),
	\end{eqnarray}
as $n\rightarrow \infty$, where $\Sigma_\ell= cov\{\Upsilon_\ell(X,Y)\} $.}

\subsection{Determination of Structural Dimension $d$}
The estimation of structural dimension $d$ is another focus in sufficient dimension reduction. We adopt the ladle estimator proposed by \cite{Ladle2016} for order determination, which extracts the information contained in both the singular values and the left singular vectors of $M$.

Let $\mathcal{B}_k=(\hat\beta_1,\ldots,\hat\beta_k)$ be the $p\times k$ matrix consisting of the principal $d$ left singular vectors of $\hat M$.
 {We randomly draw $n$ bootstrap samples of size $n$} and denote the realization of $\mathcal{B}_k$ based on the $i$th bootstrap sample as $\mathcal{B}^*_{k,i}$. The following function is proposed to evaluate the difference between $\mathcal{B}_k$ and its bootstrap counterpart
\begin{align*}
f^0_n(k)=
\begin{cases}
0, & k=0,\\
n^{-1}\sum_{i=1}^n \{1-|\text{det}(\mathcal{B}_k^T \mathcal{B}^*_{k,i})|\}, & k=1,\ldots,p.
\end{cases}
\end{align*}
And $f^0_n(k)$ is further normalized as
$f_n(k)=f^0_n(k)/\{1+\sum_{i=0}^{r_p} f_n^0(i)\},$
{where $r_p=p-1$ if $p\leq 10$, $r_p=\lfloor p/\log p \rfloor$ if $p>10$} and $\lfloor a \rfloor$ stands for the largest integer no greater than $a$.
The effect of the singular values are measured as
$g_n(k)=\hat\lambda^2_{k+1}/(1+\sum_{{i=0}}^{r_p}\hat\lambda^2_{i+1}), \quad k=0,1,\ldots,r_p.$
And the ladle estimator for structural dimension $d$ is constructed as
\begin{align*}
\hat d=argmin_{k=0,\ldots,r_p} \{f_n(k)+g_n(k)\}.
\end{align*}

To obtain the desired estimation consistency of the structural dimension, we assume that
{\assumption \label{assum2} The bootstrap version kernel matrix $M^*$ satisfies
	\begin{equation}\label{ass1}
	n^{1/2}\{{\rm{vech}} (M^*(M^*)^T)-{\rm{vech}} (\widehat{M}(\widehat{M})^T)\}\rightarrow N(0,{\rm{var}}[{\rm{vech}} \{H(X,Y)\}])
	\end{equation}
	where ${\rm{vech}}(\cdot)$ is the vectorization of the upper triangular part of a matrix and $H(X,Y)=-\Sigma^{-1}(\Gamma(X)-\Sigma)\Sigma^{-1}+\Sigma^{-1}(\Lambda^{(1)}(X,Y)-\Lambda)-\Sigma^{-1}(X-\mu)\vartheta^T-\Sigma^{-1}\vartheta(X-\mu)^T$.
}

{\assumption \label{assum3} For any sequence of nonnegative random variables {{$\{Z_n:n=1,2,\ldots\}$}} involved in this paper, if $Z_n=O_p(c_n)$ for some sequence $\{c_n: n \in N\}$ with $c_n>0$, then $E(c_n^{-1}Z_n)$ exist for each $n$ and $E(c_n^{-1}Z_n)=O(1)$.}\\

 From the proof of Theorem \ref{theo: asymptotic M}, we know that $n^{1/2}\{{\rm{vech}}(\hat{M}\hat M^T)-{\rm{vech}}(MM^T)\}$ also converges in distribution to the right-hand side of \eqref{ass1}. Assumption \ref{assum2} amounts to asserting that asymptotic behaviour of $n^{1/2}(M^{*}(M^{*})^T-\hat{M}\hat{M}^T)$ mimics that of $n^{1/2}(\hat{M}\hat{M}^T-MM^T)$. The validity of this self-similarity was discussed in \cite{Bickel1981}, \cite{Ladle2016}. Assumption \ref{assum3} has also been adopted and verified by \cite{Ladle2016}. The following theorem confirms that the number of useful sufficient predictors for linear Fr\'echet sufficient dimension reduction
can be consistently estimated.
\begin{theorem}\label{theo: ladle} Assume $Ed^2(Y,Y') < \infty$ and $X$ has finite fourth moment. And suppose Assumptions (1)--(2) hold, then
	\begin{align*}
	P_r\{\lim_{n\rightarrow\infty} P_r(\hat d =d |\mathcal{D}) {=1}\} =1,
	\end{align*}
	where $\mathcal{D}=\{(X_1,Y_1),(X_2,Y_2),\ldots\}$ is a sequence of independent copies of $(X,Y)$.
\end{theorem}

\section{Nonlinear Fr\'echet Sufficient Dimension Reduction}
As the descendant of sliced inverse regression, the weighted inverse regression ensemble method will share the similar limitation with sliced inverse regression when dealing with regression functions that are symmetric about the origin (\cite{Cook1991}). To remedy this problem and to further extend the scope of our method, we in the next will consider nonlinear Fr\'echet sufficient dimension reduction defined in (\ref{nonlinear SDR}) using the reproducing kernel Hilbert space. Let $\mathcal{H}_X$ be a reproducing kernel Hilbert space of functions of $X$ generated by a positive definite kernel $\kappa_X$. To extend the idea of weighted inverse regression ensemble for nonlinear Fr\'eceht sufficient dimension reduction, we introduce a new type of operator in the following.

{\definition Let $\mu_X(\cdot)=E\kappa_{X}(\cdot,X)$.  
For $(X,Y)$ and its independent copy $(X',Y')$, we define the weighted inverse regression ensemble operator $\Lambda_{XX'}: \mathcal{H}_{X'}\rightarrow \mathcal{H}_{X}$ such that
\begin{align*}
\Lambda_{XX'}=-E\{(\kappa_{X}(\cdot,X)-\mu_X(\cdot))\otimes(\kappa_{X}(\cdot,X')-\mu_{X'}(\cdot))d(Y,Y')\}.
\end{align*}
}

We assume the following regularity assumptions for theoretical investigations into $\Lambda_{XX'}$.

{\assumption \label{assum4} $E\kappa_X(X,X)<\infty$.}

{\assumption \label{assum5} The operator $\Lambda_{XX'}$ has a representation as $\Lambda_{XX'}=\Sigma_{XX}S$, where $S$ is a unique bounded linear operator such that $S:\mathcal{H}_X\rightarrow\mathcal{H}_X $, $S=Q_XSQ_X$ with $Q_X$ being the projection operator mapping $\mathcal{H}_X$  on to $\overline{\rm{ran}}(\Sigma_{XX})$, and $\overline{\rm{ran}}(\Sigma_{XX})$ stands for the closure of the range of the covariance operator $\Sigma_{XX}$.}

{\assumption \label{assum6} $\mathcal{G}_{Y|X}$ is dense in $L_2(P_X|\mathcal{M}_{Y|X})$, where $L_2(P_X|\mathcal{M}_{Y|X})$ denotes the collection of $\mathcal{M}_{Y|X}$-measurable functions in $L_2(P_X)$ and $\mathcal{M}_{Y|X}=\sigma[f(X)]$.}

{\assumption \label{assum7} The eigenfunctions $\psi_i$'s are included in $\mathcal{R}(\Sigma_{XX})$, where $\mathcal{R}(\Sigma_{XX})=\{\Sigma_{XX}f: f\in \mathcal{H}_X\}$.}

{\assumption \label{assum8} Let $(\varepsilon_n)_{n=1}^{\infty}$ be a sequence of positive numbers such that
	$$
	\lim_{n\rightarrow \infty} \varepsilon_n=0,\ \ \ \lim_{n\rightarrow \infty} n^{-1/2}/\varepsilon_n^{3/2}=0.
	$$. }

 Assumption \ref{assum4}, \ref{assum6} and \ref{assum7} are commonly used conditions for reproduce kernel Hilbert spaces in the literature (\cite{NonlinearSDR, Li:2018}).  Assumption \ref{assum5} is similar to the result of Theorem 1 of \cite{Baker1973} that defines the correlation operator, which will guarantee that our proposed operator is compact. Assumption \ref{assum8} is adopted by \cite{Fukumizu2007} for asymptotic analysis of kernel type methods, which is helpful to establish the estimation consistency of nonlinear weighted inverse regression ensemble method.

\begin{proposition} $\Lambda_{XX'}$ is a bounded linear and self-adjoint operator. For any $f,g \in \mathcal{H}_X$,
\begin{align*}
\langle f,\Lambda_{XX'}g\rangle=-E\{(f(X)-Ef(X)) (g(X')-E{g(X')})d(Y,Y')\}.
\end{align*}
Moreover, there exists a separable $\mathbb{R}$-Hilbert space $\mathcal{H}$ and a mapping $\phi: \Omega\rightarrow \mathcal{H}$ such that
\begin{align*}
\langle f,\Lambda_{XX'}f\rangle=2\{E[(f(X)-Ef(X)) (\phi(Y)-E\phi(Y))]\}^2=2(cov[f(X),\phi(Y)])^2.
\end{align*}
\end{proposition}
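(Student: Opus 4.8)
The plan is to derive the bilinear-form identity first and read the operator properties off it, then obtain the covariance representation from a Hilbert-space embedding of the response metric. First I would compute the quadratic form directly. Writing $\xi_X=\kappa_X(\cdot,X)-\mu_X(\cdot)$ for the centred feature map (and $\xi_{X'}=\kappa_X(\cdot,X')-\mu_X(\cdot)$), the definition reads $\Lambda_{XX'}=-E\{(\xi_X\otimes\xi_{X'})\,d(Y,Y')\}$, where the rank-one operator acts by $(\xi_X\otimes\xi_{X'})g=\langle\xi_{X'},g\rangle\,\xi_X$. The reproducing property $\langle f,\kappa_X(\cdot,X)\rangle=f(X)$ together with $\langle f,\mu_X\rangle=Ef(X)$ gives $\langle f,\xi_X\rangle=f(X)-Ef(X)$, hence
\[
\langle f,\Lambda_{XX'}g\rangle=-E\{(f(X)-Ef(X))(g(X')-Eg(X'))\,d(Y,Y')\},
\]
which is the stated formula. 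Linearity is immediate. Boundedness follows by Cauchy--Schwarz: $|f(X)-Ef(X)|=|\langle f,\xi_X\rangle|\le\|f\|\,\|\xi_X\|$, and since $(X,Y)$ and $(X',Y')$ are independent copies, $E\{\|\xi_X\|^2\|\xi_{X'}\|^2\}=(E\|\xi_X\|^2)^2$ with $E\|\xi_X\|^2\le E\kappa_X(X,X)<\infty$ by Assumption \ref{assum4}; combined with the moment condition $Ed^2(Y,Y')<\infty$ this bounds the form by a constant multiple of $\|f\|\,\|g\|$ (in fact $\Lambda_{XX'}$ is Hilbert--Schmidt, so the Bochner integral defining it converges). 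Self-adjointness is read off the same identity: swapping the identically distributed copies $(X,Y)\leftrightarrow(X',Y')$ leaves the expectation unchanged and $d$ is symmetric, so $\langle f,\Lambda_{XX'}g\rangle=\langle g,\Lambda_{XX'}f\rangle$.

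For the covariance representation I would invoke a Hilbert-space embedding of the metric: since $d$ is of negative type, by Schoenberg's theorem there is a separable real Hilbert space $\mathcal{H}$ and a map $\phi:\Omega\rightarrow\mathcal{H}$ with $d(y,y')=\|\phi(y)-\phi(y')\|_{\mathcal{H}}^2$. Setting $U=f(X)-Ef(X)$ (mean zero) and expanding $d(Y,Y')=\|\phi(Y)\|^2-2\langle\phi(Y),\phi(Y')\rangle+\|\phi(Y')\|^2$ inside $\langle f,\Lambda_{XX'}f\rangle=-E\{UU'd(Y,Y')\}$, the two squared-norm terms vanish because the independent factor has mean zero (for instance $E\{UU'\|\phi(Y)\|^2\}=E\{U\|\phi(Y)\|^2\}\,EU'=0$), while independence and identical distribution of the two copies turn the cross term into $\langle E\{U\phi(Y)\},E\{U'\phi(Y')\}\rangle=\|E\{U\phi(Y)\}\|^2$. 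Since $EU=0$, we have $E\{U\phi(Y)\}=E\{U(\phi(Y)-E\phi(Y))\}=\mathrm{cov}[f(X),\phi(Y)]$, giving $\langle f,\Lambda_{XX'}f\rangle=2\|\mathrm{cov}[f(X),\phi(Y)]\|^2$, exactly the claim (and incidentally showing $\Lambda_{XX'}$ is positive semidefinite, mirroring Proposition \ref{property M1}).

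The main obstacle is the existence of the embedding $\phi$: the collapse of the quadratic form to a squared covariance hinges entirely on writing $d$ as a squared Hilbert distance, which is available precisely when $d$ is a semimetric of negative type. I would therefore make the negative-type requirement explicit and check the attendant integrability. Translating $\phi$ so that $\phi(y_0)=0$ at a fixed base point yields $\|\phi(Y)\|^2=d(Y,y_0)$, so both $E\|\phi(Y)\|^2$ and the Bochner integral $E\{U\phi(Y)\}$ are controlled by $Ed^2(Y,Y')<\infty$; since $d(Y,Y')$ and $\mathrm{cov}[f(X),\phi(Y)]$ are invariant under such a translation, the identity is unaffected. Separability of $\mathcal{H}$ and measurability of $\phi$ follow from separability of $(\Omega,d)$, which I would record as a standing hypothesis.
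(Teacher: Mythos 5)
Your proof is correct and follows essentially the same route as the paper's: the reproducing property gives the bilinear identity, Cauchy--Schwarz with $E\kappa_X(X,X)<\infty$ and $Ed^2(Y,Y')<\infty$ gives boundedness, symmetry under swapping the i.i.d.\ copies gives self-adjointness (which the paper merely asserts), and the quadratic form collapses via the embedding $d(Y,Y')=\|\phi(Y)-\phi(Y')\|_{\mathcal{H}}^2$ with the squared-norm terms killed by independence and mean-zero centering, exactly as in the paper. Your one genuine addition is the observation that this Schoenberg-type embedding exists precisely when $d$ is a (semi)metric of negative type --- a hypothesis the paper invokes implicitly by citing Schoenberg (1937, 1938) as a ``fact'' for general metric spaces but never states --- together with the translation $\phi(y_0)=0$ to control integrability of the Bochner integral; this is a legitimate sharpening rather than a departure in method.
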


Proposition 2 implies that our proposed new operator enjoys a similar fashion as the commonly used covariance operator. The new operator also has the potential to  measure the dependence between Euclidean $X$ and random objects $Y$ due to its similarity to the popular Hilbert-Schmidt Independence Criterion (\cite{Gretton2005}). Denote the covariance operator of $X$ as $\Sigma_{XX}=E\{\kappa_{X}(\cdot,X)\otimes \kappa_{X}(\cdot,X)\} - E\kappa_{X}(\cdot,X) \otimes E\kappa_{X}(\cdot,X) $.. The next proposition reveals the relationship between $\Lambda_{XX'}$ and the central class $\mathcal{G}_{Y|X}$.

\begin{proposition}\label{property M2}  Suppose assumptions (3)--(5) hold, then
$$\overline{\textup{ran}}\left\{\Sigma_{XX}^{-1}\Lambda_{XX'}\right\}\subseteq \mathcal{G}_{Y|X}.$$
\end{proposition}

\begin{proposition}\label{property M3}  Suppose assumptions (3)--(5) hold and $\mathcal{G}_{Y|X}$ is complete.
Then,
$$\overline{\textup{ran}}\left\{\Sigma_{XX}^{-1}\Lambda_{XX'}\right\}= \mathcal{G}_{Y|X}.$$
\end{proposition}

Proposition \ref{property M2} suggests that the range of $\Sigma_{XX}^{-1}\Lambda_{XX'}$  is always contained in the central class $\mathcal{G}_{Y|X}$.  Proposition \ref{property M3} further extends the scope in the following aspects. First, it confirms that the nonlinear weighted inverse regression ensemble method is exhaustive in recovering the central class. The exhaustiveness of our nonlinear proposal is an appealing property which may not exist in the linear setting. The second is that the nonlinear weighted inverse regression ensemble method leads to the minimal sufficient predictor satisfying (\ref{nonlinear SDR}), as sufficiency and completeness together imply minimal sufficiency in classical statistical inference. Last but not least, the nonlinear weighted inverse regression ensemble method does not rely on the linear conditional mean assumption
requiring that $E(X|\beta^T X)$ be linear in $X$. By relaxing such a stringent condition, the nonlinear method will have a wide range of applications.

Let $\Lambda_{XX'}^*$ be the adjoint operator of $\Lambda_{XX'}$.  Proposition \ref{property M3} indicates that
\begin{eqnarray}\label{CM11}
\overline{\textup{ran}}\left\{\Sigma_{XX}^{-1}\Lambda_{XX'}\Lambda_{XX'}^*\Sigma_{XX}^{-1}\right\}=\mathcal{G}_{Y|X},
\end{eqnarray}
The space \eqref{CM11} can be recovered by performing the following generalized eigenvalue problem:
\begin{eqnarray}\label{eq11}
\mathrm{max}\ \ \langle f, \Lambda_{XX'}\Lambda_{XX'}^*f\rangle_{\mathcal{H}_X}, \ \mathrm{s.t.} \ \langle f, \Sigma_{XX}f\rangle_{\mathcal{H}_X}=1, f\bot \mathcal{L}_{k-1},
\end{eqnarray}
where $\mathcal{L}_{k}=\textup{Span}(f_1,\ldots,f_{k-1})$ and $f_1,\ldots,f_{k-1}$ are the solutions to this constrained maximization problem in the previous steps. Define the following sample level estimators
\begin{align*}
&\hat\mu_{X}(\cdot)=E_n[\kappa_{X}(\cdot,X_i)] \quad \hat{\Sigma}_{XX}=E_n\{ (\kappa_{X}(\cdot,X_i)-\hat\mu_X(\cdot))\otimes(\kappa_{X}(\cdot,X_i)-\hat\mu_{X}(\cdot))\},\\
&\hat{\Lambda}_{XX}=-\sum_{1\le i\neq j\le n}(\kappa_{X}(\cdot,X_i)-\hat\mu_X(\cdot))\otimes(\kappa_{X}(\cdot,X_j)-\hat\mu_{X}(\cdot))d(Y_i,Y_j)/(n(n-1)).
\end{align*}
The sample version of (\ref{eq11}) then becomes
\begin{eqnarray}\label{nonlinear sample}
	\mathrm{max}\ \ \langle f, \widehat{\Lambda}_{XX'}\widehat{\Lambda}_{XX'}^*f\rangle_{\mathcal{H}_X}, \ \mathrm{s.t.} \ \langle f, (\widehat{\Sigma}_{XX}+\varepsilon_nI)f\rangle_{\mathcal{H}_X}=1.
\end{eqnarray}

Let $V_{XX'}=\Sigma_{XX}^{-1/2}\Lambda_{XX'}\Lambda_{XX'}^*\Sigma_{XX}^{-1/2}$. Then we can verify that  $f_1=\Sigma_{XX}^{-1/2}\psi_1$, where
\begin{align*}
\psi_1=\arg\max_{\substack{g \in \mathcal{H}_X, \|g\|_{ \mathcal{H}_X}=1}}\langle g, V_{XX'}g\rangle_{\mathcal{H}_X}.
\end{align*}
Let $\hat V_{XX'}=(\hat{\Sigma}_{XX}+\varepsilon_n I)^{-1/2}\hat{\Lambda}_{XX'}\hat{\Lambda}_{XX'}^*(\hat{\Sigma}_{XX}+\varepsilon_n I)^{-1/2}$. Then we have
\begin{align*}
\hat f_1(X)=(\hat{\Sigma}_{XX}+\varepsilon_nI)^{-1/2}\hat{\psi}_1, \quad \hat\psi_1=\arg\max_{\substack{g \in \mathcal{H}_X, \|g\|_{ \mathcal{H}_X}=1}}\langle g, \hat V_{XX'}g\rangle_{\mathcal{H}_X}.
\end{align*}

We in the next establish the estimation consistency of our nonlinear Fr\'ecechet sufficient dimension reduction approach. Although we only focus on the first eigenfunction in the following theorem, similar asymptotic results can be derived for the entire central .

{\theorem\label{theorem3} Suppose assumptions (3)--(7) hold. {{In addition, assume that $Ed^2(Y,Y') < \infty$}}, then as $n\rightarrow \infty$
\begin{eqnarray*}
&&\|\hat{V}_{XX'}-V_{XX'}\|_{\text{HS}}=o_p(1),\ \ \
|\langle\hat{\psi}_1,\psi_1\rangle_{ \mathcal{H}_X}|\stackrel{P}{\longrightarrow} 1,\\
&&\|\{\hat{f}_1(X)-E\hat{f}_1(X)\}-\{f_1(X)-Ef_1(X)\}\|{\longrightarrow} 0,
\end{eqnarray*}
where $\|\cdot\|$ in this theorem is the standard $L_2$ norm to measure the distance of functions and $\| \cdot \|_{\text{HS}}$ denotes the Hilbert-Schmidt norm.
}

Let $\eta_i={{\kappa_X(\cdot,X_i)}}-\hat{\mu}_X(\cdot), i=1,\ldots,n$. The estimated eigenfunctions $\hat f_\ell$'s solved from (\ref{nonlinear sample}) can be further characterized as a linear combination of $\eta_i$ such that $\hat f_\ell=\sum_{i=1}^n a_{\ell,i} \eta_i$. Denote $\alpha_\ell=(a_{\ell,1},\ldots, a_{\ell,n})^T$. The next proposition indicates that $\alpha_\ell$ can be obtained through solving an eigen-decomposition problem.

\begin{proposition}\label{coordinate representation.} Let $K_n$ be the $n\times n$ kernel matrix whose $(i, j)$th element is $\kappa_X(X_i, X_j)$. Denote $J_n$ as the $n\times n$ matrix whose elements are all one. Define $G_X= (I_n-J_n/n)K_n
(I_n-J_n/n)$ and let $D_Y$ be the $n\times n$  matrix whose $(i, j)$th element is $d(Y_i,Y_j)$. Then we have $ G_X\alpha_\ell= \gamma_\ell$, where $\gamma_\ell$ is the $\ell$th eigenvector of the following matrix
\[(G_X+\varepsilon_n I_n)^{-1}G_XD_YG_XD_YG_X(G_X+\varepsilon_n I_n)^{-1}.\]
\end{proposition}
 Let $\hat\alpha_\ell= (G_X+\varepsilon_n I_n)^{-1}\gamma_\ell$. Inspired by Proposition 4, the $\ell$th estimated sufficient predictor can then be represented as $\hat f_{\ell}=\sum_{i=1}^n \hat a_{\ell,i} \eta_i$, where $\hat a_{\ell,i}$  is the $i$th element of the $n\times 1$ vector $\hat\alpha_\ell$.

\section{Numerical Studies}
We consider the following models with responses being complex random objects.

\vspace{0.2cm}
{Model I}. Let $\beta_1=(1,1,0,\ldots,0)^T$ and $\beta_2=(0,\ldots,0,1,1)^T$.
$X\sim U[0,1]^p$ and $Y$ is the distribution function with its quantile function being $Q_Y(\tau)=\mu_Y+\sigma_Y \Phi^{-1}(\tau)$, where $\Phi(\cdot)$ is the cumulative distribution function of standard normal, $\mu_Y|X\sim N(\exp(\beta_1^T X), 0.5^2)$. And we consider $\sigma_Y=1$ as case (i) and $\sigma_Y=|\beta_2^T X|$ as case (ii). As $Y$ and its independent copy $Y'$ are
random distribution functions, then we adopt the Wasserstein distance as the metric $d(Y,Y')$. For case (i), $\mathcal{S}_{Y|X}=\text{Span}(\beta_1)$ and $d=1$.
For case (ii), $\mathcal{S}_{Y|X}=\text{Span}(\beta_1,\beta_2)$ and $d=2$.

\vspace{0.2cm}
{Model II.} Consider the following Fr\'echet regression function
\[{m(X)=(\cos(f_1(X)), \sin(f_1(X)))}.\]
Generate $\varepsilon$ from $N(0,0.1^2)$ on the tangent line of $m(x)$. And the response $Y$ is generated as
\[Y= \cos(\varepsilon)m(x) \oplus  \sin(\varepsilon) \varepsilon/|\varepsilon|,\]
where $\oplus$ stands for vector addition. We can verify that $Y\in \Omega$ where $\Omega$ is the unit circle in $\mathbb{R}^2$. Then $d(Y,Y')$ is naturally chosen as the geodesic distance $\arccos(Y^T Y')$. Moreover, we consider case (i)  $f_1(X)=\beta_1^T X$ with $X\sim U[0,1]^p$  and cased (ii) where $f_1(X)=(x_1^2+x_2^2)^{1/2}$ with $X\sim N(0_p,I_p)$ for both linear and nonlinear Fr\'echet sufficient dimension reduction.

\vspace{0.2cm}
{Model III.} Generate $\varepsilon_i$ from $N(0,0.1^2)$ for $i=1,2$. We consider two cases in this study.  The model structure of case (i) is exactly the same as our motivating example (i) illustrated in Section 1 with $X\sim U[0,1]^p$. For case (ii), the response $Y$ is generated as
\[
 Y=(\sin(f_1(X)+\varepsilon_1)^{1/3}\sin(f_2(X)+\varepsilon_2)^{1/3},\sin(f_1(X)+\varepsilon_1)^{1/3}\cos(f_2(X)+\varepsilon_2)^{1/3},\cos(f_1(X)+\varepsilon_1)^{1/3}),\]
where $f_1(X)=x_1^2+x_2^2$ and $f_2(X)=x^2_{p-1}+x^2_p$, and $X\sim N(0_p,I_p)$.
We see that $Y\in \Omega$ where $\Omega$ is the unit sphere in $\mathbb{R}^3$. Again $d(Y,Y')=\arccos(Y^T Y')$ is the geodesic distance.

\vspace{0.2cm}
Model I and case (i) of and Model II and III are adopted for linear Fr\'echet sufficient dimension reduction, while the two rest cases are examples for nonlinear Fr\'echet sufficient dimension reduction. Let $\hat{\beta}$ and $\hat f(X)$ be our proposed linear and nonlinear weighted inverse regression estimators. To evaluate our proposal for linear Fr\'echet sufficient dimension reduction, we adopt the trace correlation (\cite{tracecorr1998}) defined as $r^2=tr(P_\beta P_{\hat{\beta}})/d$ , where $P_\beta=\beta(\beta^T\beta)^{-1}\beta^T$.  To assess the performance of nonlinear Fr\'echet sufficient dimension reduction, we utilize the square distance correlation $\rho^2(f(X),\hat f(X))$ proposed by \cite{DC2007}. The square distance correlation can also be adapted to linear Fr\'echet sufficient dimension reduction as $\rho^2(\beta^T X,\hat\beta^T X)$. And larger values of $r^2$ or $\rho^2$ indicate better estimation.

We consider $n=100, 200, 300, 400$ and $p=10,20,30$. Treating $d$ as known, Table 1 and 2 summarize the { mean} values of $r^2$ and $\rho^2$ based on {100} repetitions with different combinations of $n$ and $p$. We can see from Table 1 that the original weighted inverse regression ensemble works well except for case (ii) of Model II and III with U-shape structure, which is consistent with our theoretical anticipation. As an effective remedy, the nonlinear weighted inverse regression produces a satisfying result as seen from Table 2, in which the tuning parameter is simply set as $\varepsilon_n=0.001$ and Gaussian kernel  $\kappa_X(X,X')=\exp\{-\|X-X'\|^2/(2\sigma^2_\kappa)\}$ is adopted with $\sigma_\kappa=0.1$. The results for order determination are presented in Table 3, where the entries are the number of correct estimation of $d$ out of $100$ repetitions. Table 3 shows that the ladle estimator in combination with weighted inverse regression ensemble works well, with percentage of correct estimation reaching as high as $100\%$ for most cases.

\begin{table}[h]\label{ta1}
	\def~{\hphantom{0}}
	\caption{\it The Averages of $r^2$ and $\rho^2$  for the estimation of $\mathcal{S}_{Y|X}$ based on $100$ simulation runs. }{%
		\begin{tabular}{cccccccccccccc}
			&&\multicolumn{4}{c}{Model I }&\multicolumn{4}{c}{Model II}&\multicolumn{4}{c}{Model III}\\[5.68pt]
			&$(p,n)$    &	100      &   200	  &	  300	     & 	400	    &	   	 100     &   200	  &   300	    &  400	&	   	 100     &   200	  &   300	    &  400    \\
			\multirow{6}{*}{Case i}&10         &	0.979	 &   0.988   &   0.993   	 &  0.995	&	   	0.988	 &   0.996   &   0.997   	 &  0.998	&	0.987	 &   0.995   &   0.996   	 &  0.997\\
			&           &	0.973	 &   0.984   &   0.990  	 &  0.993	&	  	0.988	 &   0.994   &   0.996  	 &  0.967	&	0.987	 &   0.994   &   0.995  	 &  0.997\\
			&20         &	0.947	 &   0.976   &   0.976	     &  0.989	&	   	0.971	 &   0.991   &   0.994	     &  0.995   &	0.966	 &   0.986   &   0.991	     &  0.994\\
			&           &	0.943	 &   0.970   &   0.970	     &  0.985	&	 	0.970	 &   0.989   &   0.992	     &  0.993   &	0.873	 &   0.986   &   0.990	     &  0.993\\
			&30	       &	0.908    &   0.960   &   0.976       &  0.982   &	  	0.961    &   0.987   &   0.990       &  0.992   &	0.945    &   0.977   &   0.986       &  0.989\\
			&           &	0.917	 &   0.955   &   0.970	     &  0.976 	&	  	0.964	 &   0.985   &   0.988	     &  0.990   &	0.959	 &   0.978   &   0.985	     &  0.989\\ [5.68pt]
			\multirow{6}{*}{Case ii}&	       10   &	0.990	 &   0.995    &   0.997   	&  0.998 &	0.255	 &   0.253    &   0.234    &  0.278	&	0.379 	 &   0.369     &   0.382  &  0.377\\
			&        &	0.989	 &   0.994    &   0.996   	&  0.997 &	0.122	 &   0.092    &   0.078    &  0.085	&	0.231 	 &   0.178     &   0.166  &  0.156\\
			&     20	&	0.976	 &   0.988    &   0.993	    &  0.995  &	0.136	 &   0.125    &   0.123    &  0.134 &	0.177 	 &   0.188     &   0.188  &  0.196  \\
			&	        &	0.978	 &   0.989    &   0.992	    &  0.994 &	0.092	 &   0.053    &   0.038    &  0.039 &	0.163 	 &   0.097     &   0.078  &  0.068  \\
			&    30   &	0.956    & 	 0.982    &   0.988     &  0.991 &	0.094    & 	 0.085    &   0.084    &  0.094 &	0.118   & 	 0.122     &   0.126  &  0.134  \\
			&    	    &	0.967	 &   0.982    &   0.987	    &  0.991 &  0.085	 &   0.048    &   0.032    &  0.028 &  0.147 	 &   0.081     &   0.060  &  0.049  \\																												
	\end{tabular}}
\caption*{ The average $r^2$ and $\rho^2$ are listed in the first and second rows for each $p$.}
\end{table}

 \begin{table}[h]\label{t2}
	\def~{\hphantom{0}}
	\caption{\it The Averages of $\rho^2$ for the estimation of $\mathcal{G}_{Y|X}$ based on $100$ simulation runs. }
\centering
		\begin{tabular}{cccccccccc}
			&\multicolumn{4}{c}{Model II (case ii)}&&\multicolumn{4}{c}{Model III (case ii)}\\[5.68pt]
			$(p,n)$    &	100      &   200	  &	  300	     & 	400	    &   $(p,n)$    &	100      &   200	  &	  300	     & 	400	      	  \\
			10         &	0.948	 &   0.952   &   0.952   	 &  0.952	&	10         &	0.818	 &   0.828   &   0.827   	 &  0.828   \\
			20         &	0.952	 &   0.952   &   0.951	 &  0.952	&	20         &	0.823	 &   0.834   &   0.828	 &  0.829	  \\
			30	       &	0.952   &   0.951   &   0.952     &  0.952  &	30	       &	0.827   &   0.826   &   0.827     &  0.827   \\				
	\end{tabular}
\end{table}

\begin{table}[h]\label{t2}
	\def~{\hphantom{0}}
	\caption{\it The number of correctly estimation for $d$ among $100$ simulation runs. }{%
		\begin{tabular}{cccccccccccccccccccccccc}	
			&\multicolumn{4}{c}{Model I (case i)}&\multicolumn{4}{c}{Model I (case ii)}&\multicolumn{4}{c}{Model II (case i)}&\multicolumn{4}{c}{Model III (case i)}\\[5.68pt]
			$(p,n)$    &	100      &   200	  &	  300	     & 	400	    &	   	 100     &   200	   &   300	        &  400	 &	100      &   200	  &	  300	     & 	400     &	100      &   200	  &	  300	     & 	400   \\
			10         &	100  	 &   100      &   100   	 &  100	    &	   	 100	 &   100       &   100   	    &  100   &	100  	 &   100      &   100   	 &  100&	100  	 &   100      &   100   	 &  100	\\
			20         &	100	     &   100      &   100	     &  100	    &	   	 100	 &   100       &   100	        &  100    &	100  	 &   100      &   100   	 &  100&	100  	 &   100      &   100   	 &  100  \\
			30	       &	100      &   100      &   100        &  100     &	     100     & 	 100       &   100          &  100    &	100  	 &   100      &   100   	 &  100&	99  	 &   100      &   100   	 &  100  \\
	\end{tabular}}
\end{table}

To better illustrate the performance of our nonlinear Fr\'echet sufficient dimension reduction method, we in Figure 2 present the 2-D scatter
plots for the nonlinear sufficient predictors from case (ii) of Model III versus their sample estimates obtained by the nonlinear weighted inverse regression ensemble method with $n=100$ and $p=10$.  The left panel is the 2-D scatter plots for  the first
nonlinear sufficient predictor $f_1(X)=x_1^2+x_2^2$ versus its estimate $\hat f_1(X)$; the right panel is the 2-D scatter plots for the second
nonlinear sufficient predictor  $f_2(X)=x_{p-1}^2+x_{p}^2$  versus its estimate $\hat f_2(X)$.  Figure 2 shows a strong relationship between $f_i$ and $\hat f_i$ for $i=1,2$. $\hat f_i$ behaves like a measurable function of $f_i$, which is consistent with our theoretical development as our focus on nonlinear Fr\'echet sufficient dimension reduction is the $\sigma$-field generated by $f_1$ and $f_2$ rather than $f_1$ and $f_2$ themselves. As $f^{1/3}_i$ is a measurable function of $f_i$, then $f_i^{1/3}$ can also be regarded as the nonlinear sufficient predictor. We in Figure 3 present the 2-D scatter
plots for the nonlinear sufficient predictors $f_1^{1/3}$ and  $f_2^{1/3}$ versus
$\hat f_1$ and  $\hat f_2$. We can observe a strong linear pattern between $f^{1/3}_i$ and $\hat f_i$, which again verify that our proposed nonlinear weighted inverse regression ensemble method is effect in recovering the central class $\mathcal{G}_{Y|X}$ with responses $Y$ being metric space valued random objects.

\begin{figure}[h]
	\centering
	\includegraphics[width=4.68in,height=2.68in]{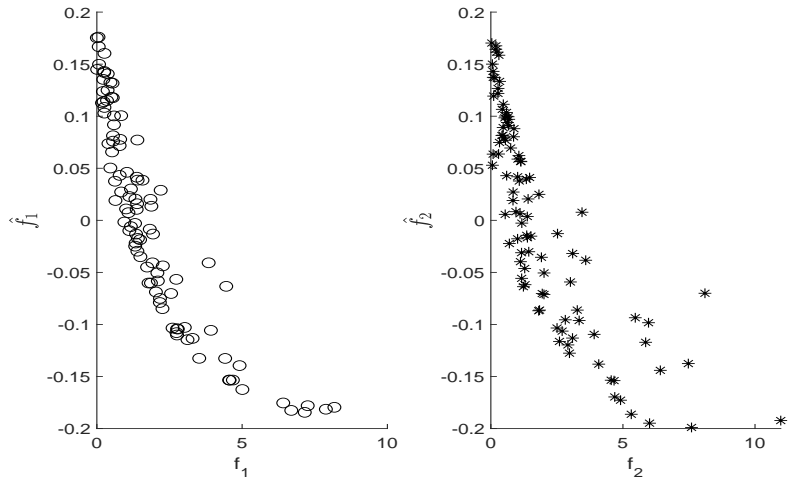}
	\caption{Scatter plots of nonlinear sufficient predictors $f_i$'s versus their estimates $\hat f_i$'s.}
\end{figure}

\begin{figure}[h]
	\centering
	\includegraphics[width=4.68in,height=2.68in]{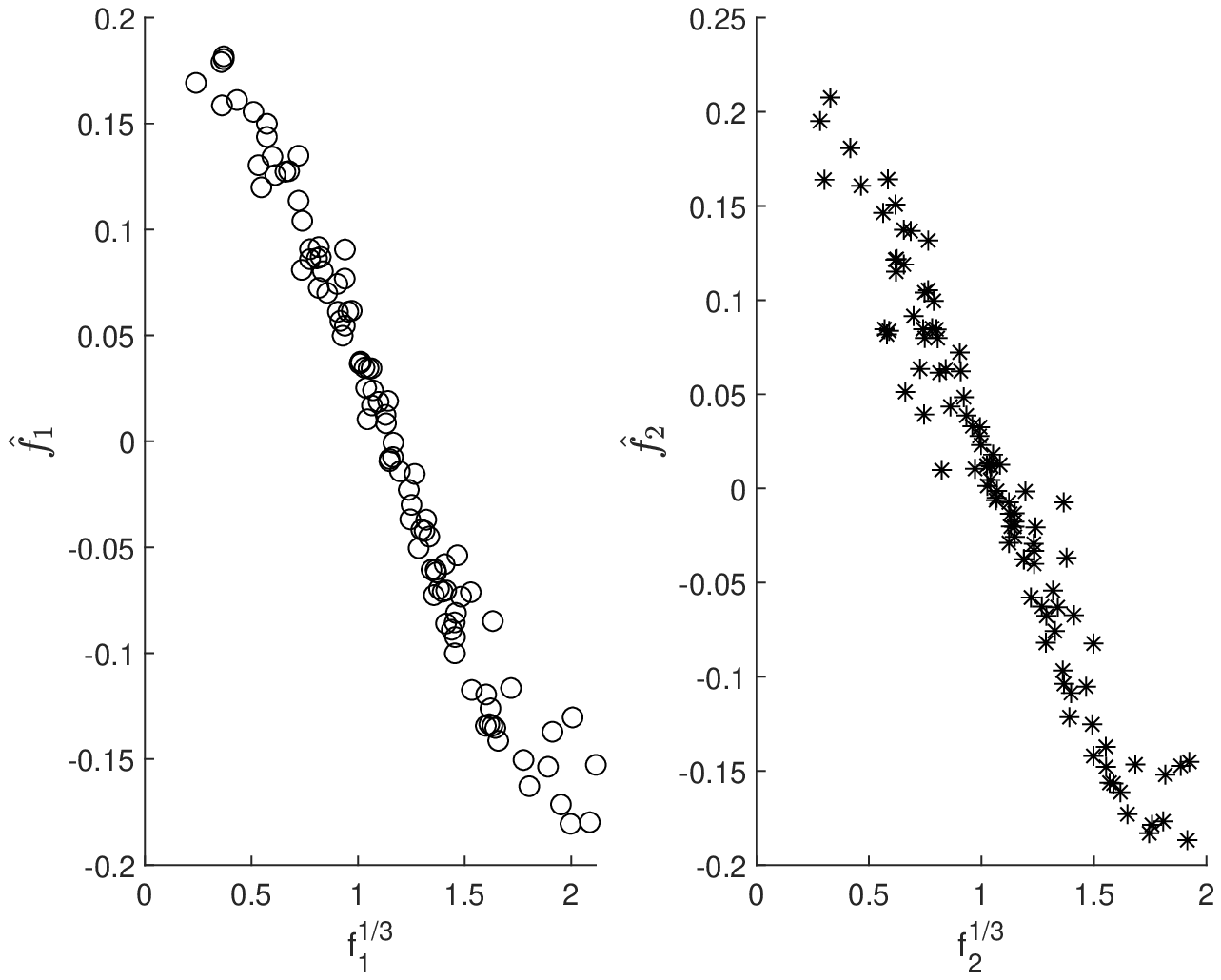}
	\caption{Scatter plots of nonlinear sufficient predictors $f^{1/3}_i$'s versus their estimates $\hat f_i$'s.}
\end{figure}

\section{Handwritten Digits Data}

\begin{figure}[!h]
\centering
\setlength{\abovecaptionskip}{0.cm}
\subfigure{
\begin{minipage}[c]{0.3\textwidth}
\centering
\includegraphics[height=2in,width=1.8in]{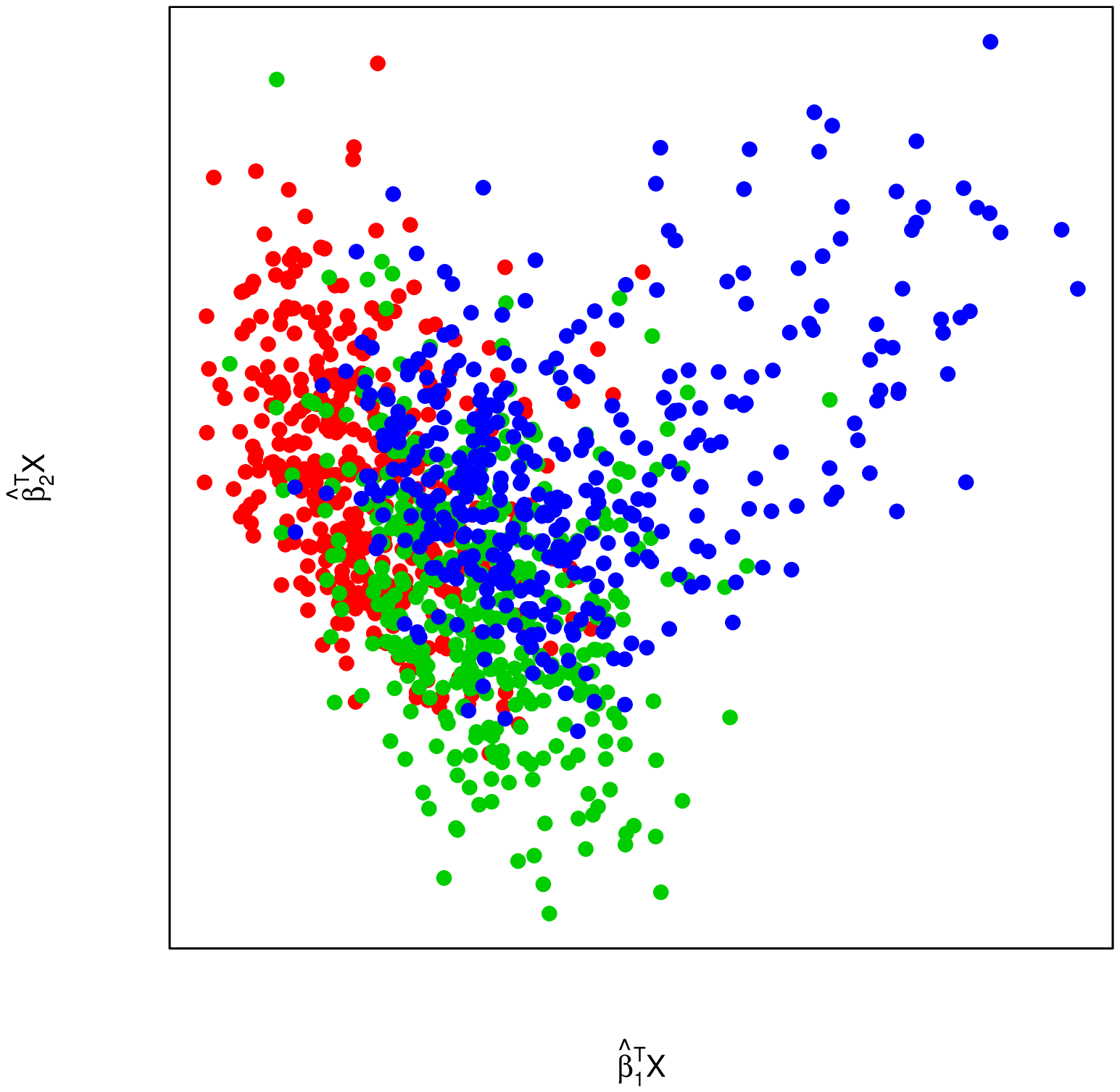}
\end{minipage}
\begin{minipage}[c]{0.3\textwidth}
\centering
\includegraphics[height=2in,width=1.8in]{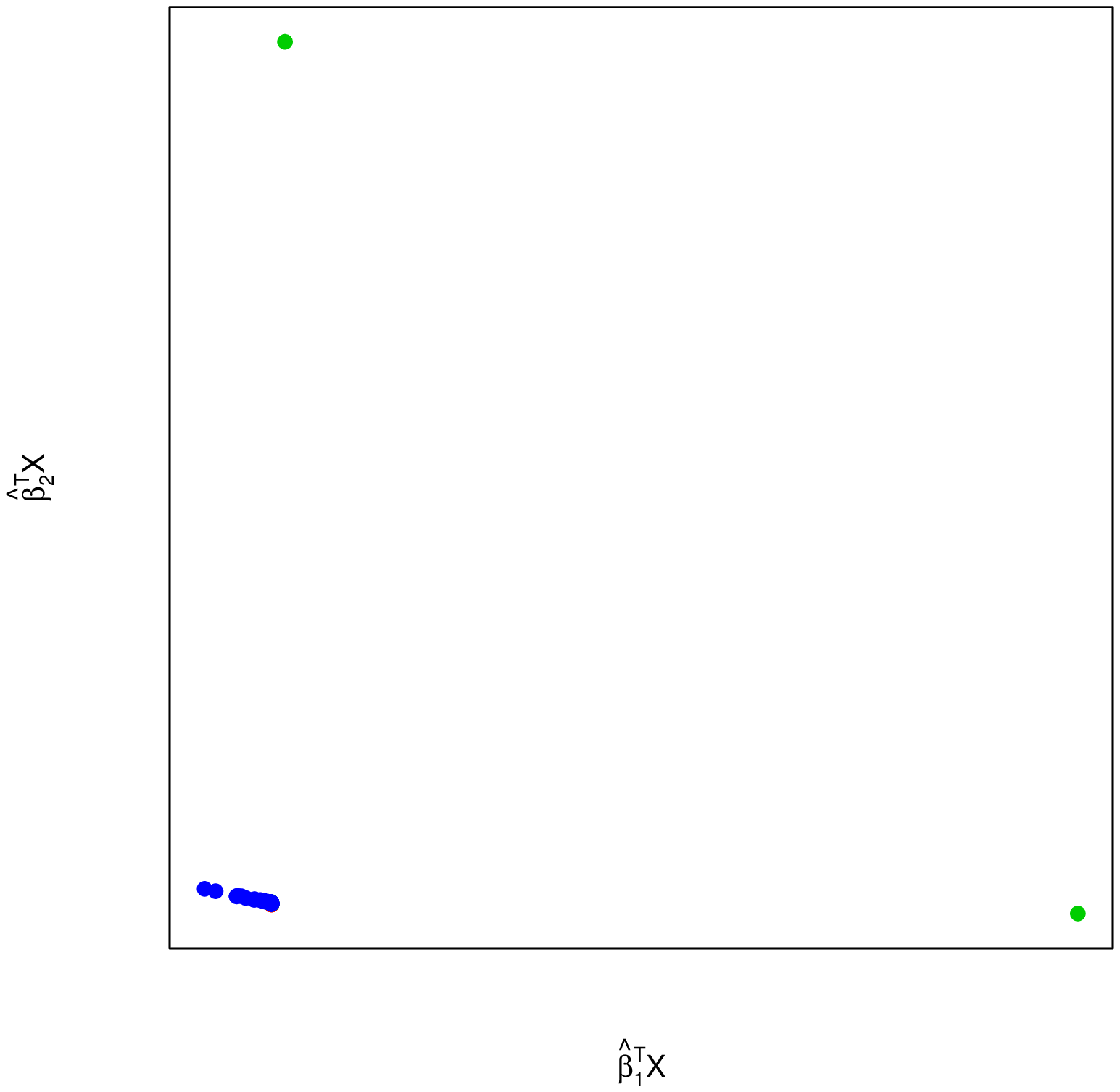}
\end{minipage}
\begin{minipage}[c]{0.3\textwidth}
\centering
\includegraphics[height=2in,width=1.8in]{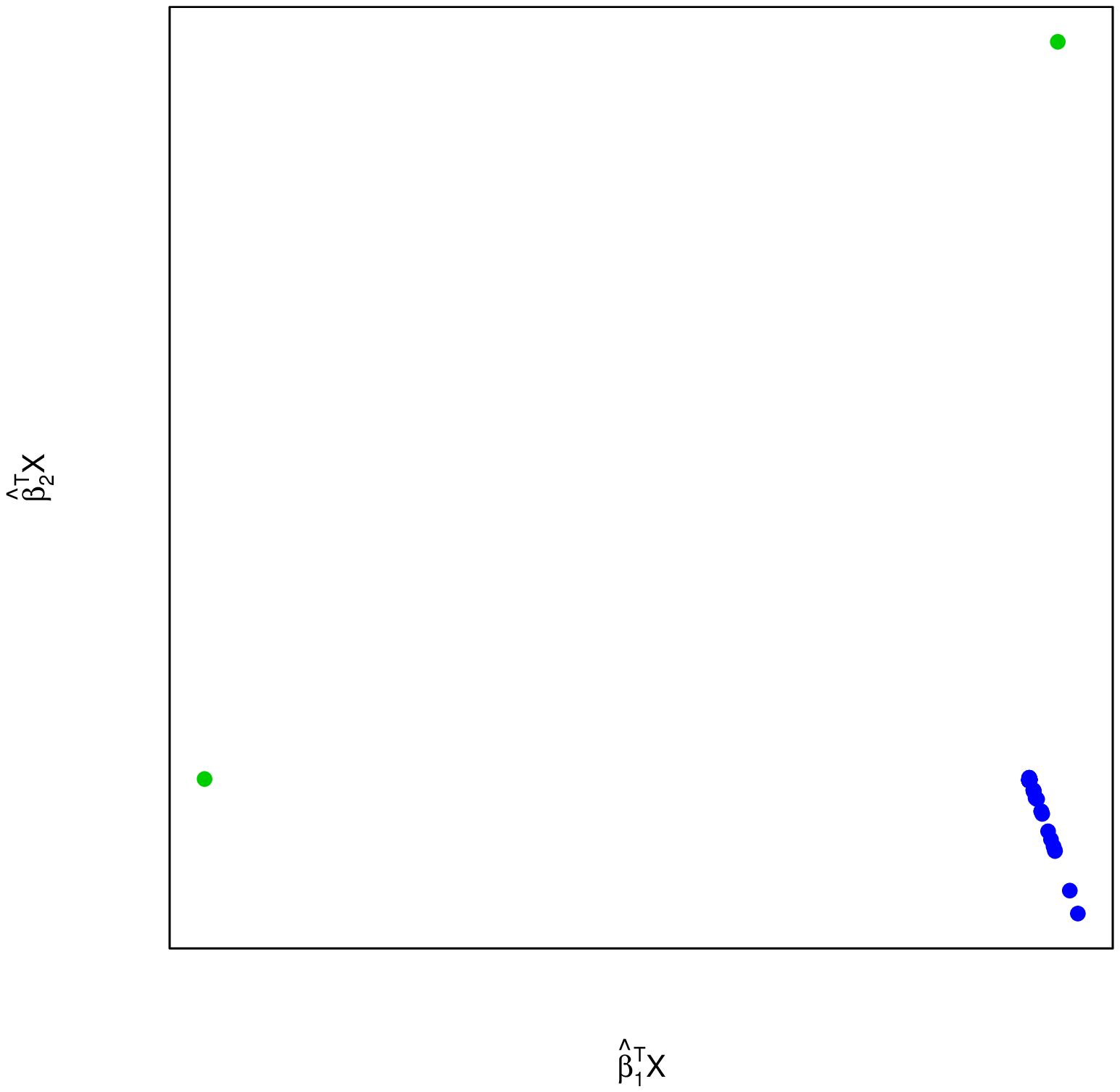}
\end{minipage}
}
\centering
\subfigure{
\begin{minipage}[c]{0.3\textwidth}
\centering
\includegraphics[height=2in,width=1.8in]{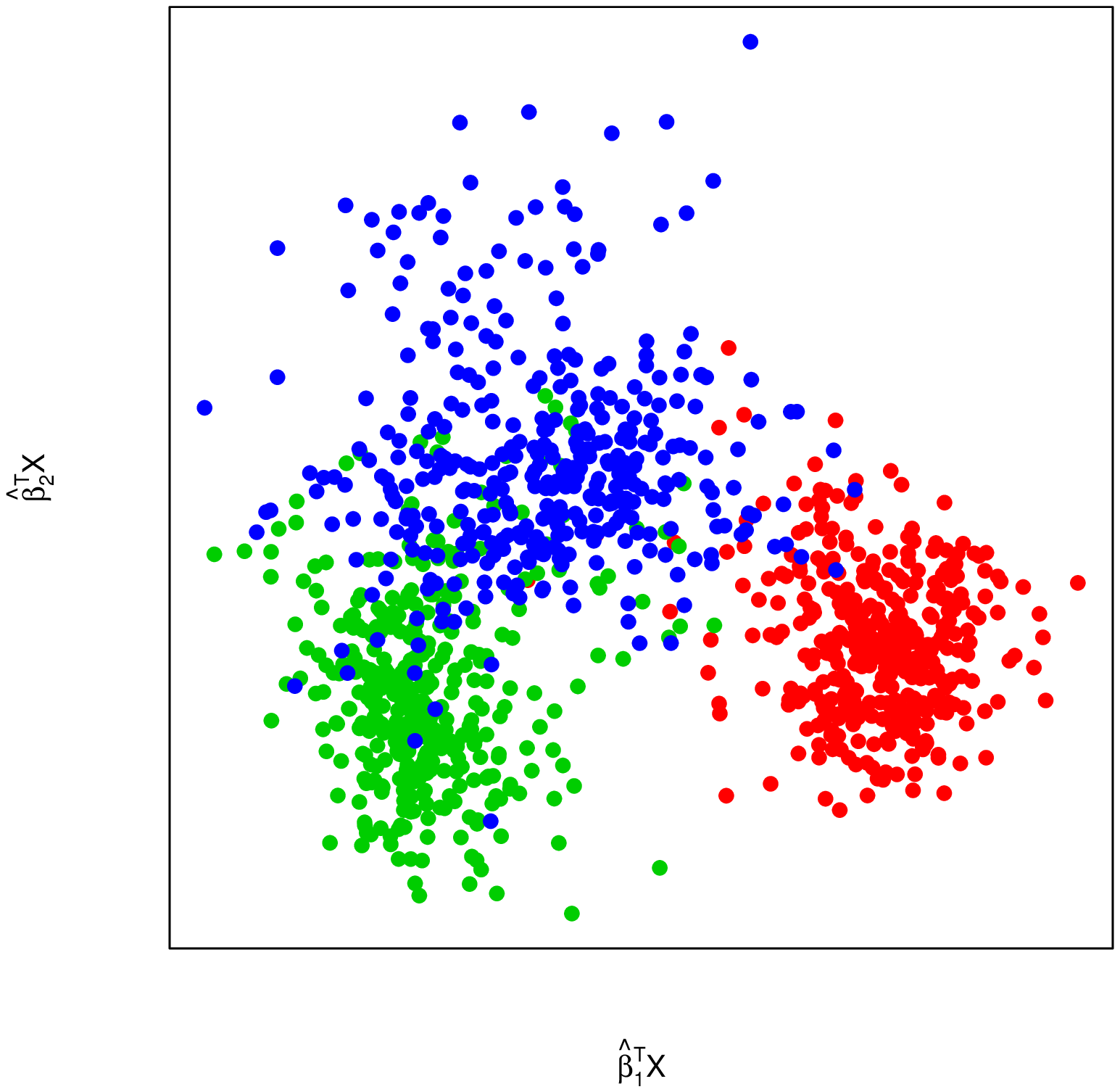}
\end{minipage}
\begin{minipage}[c]{0.3\textwidth}
\centering
\includegraphics[height=2in,width=1.8in]{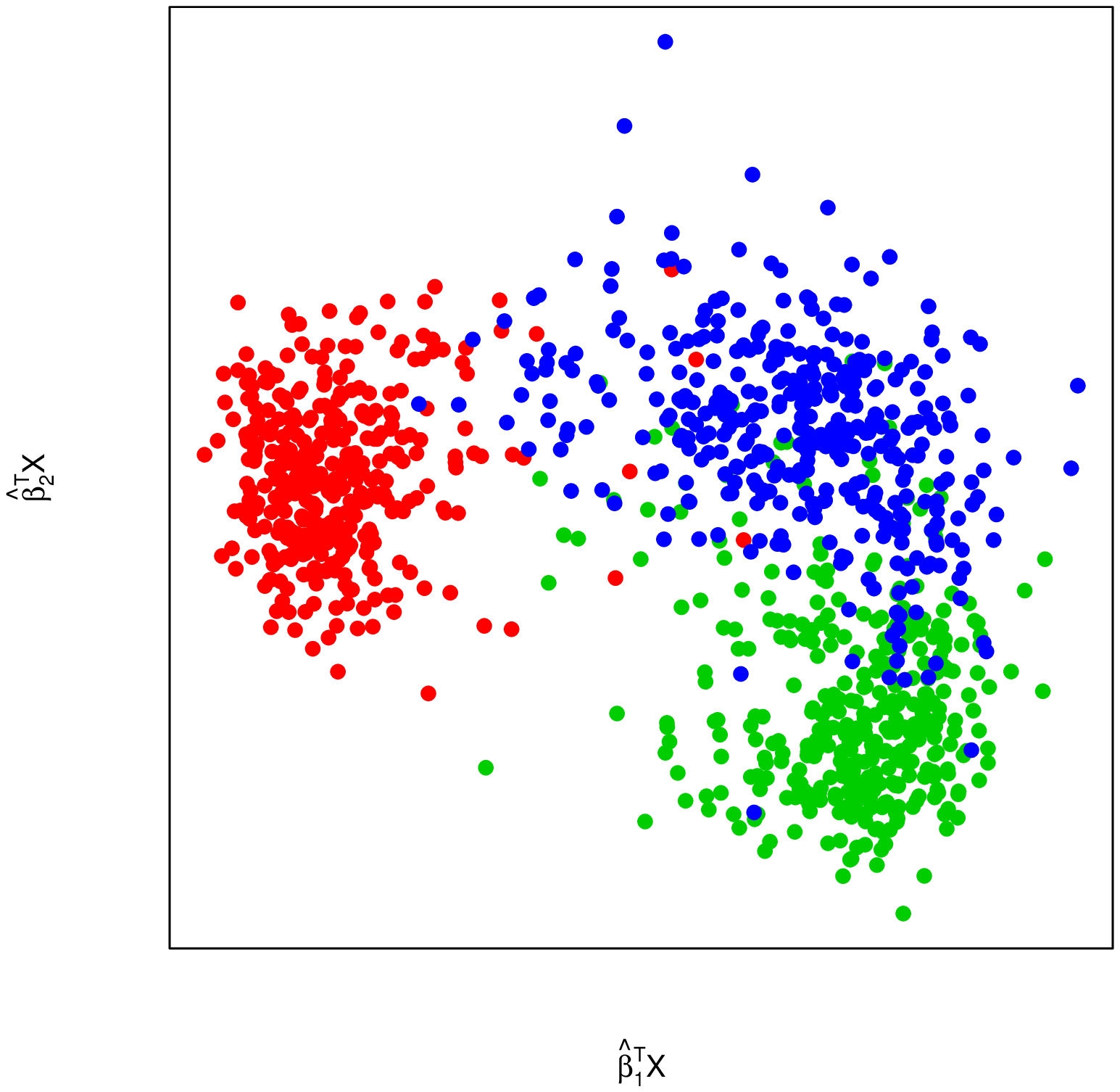}
\end{minipage}
\begin{minipage}[c]{0.3\textwidth}
\centering
\includegraphics[height=2in,width=1.8in]{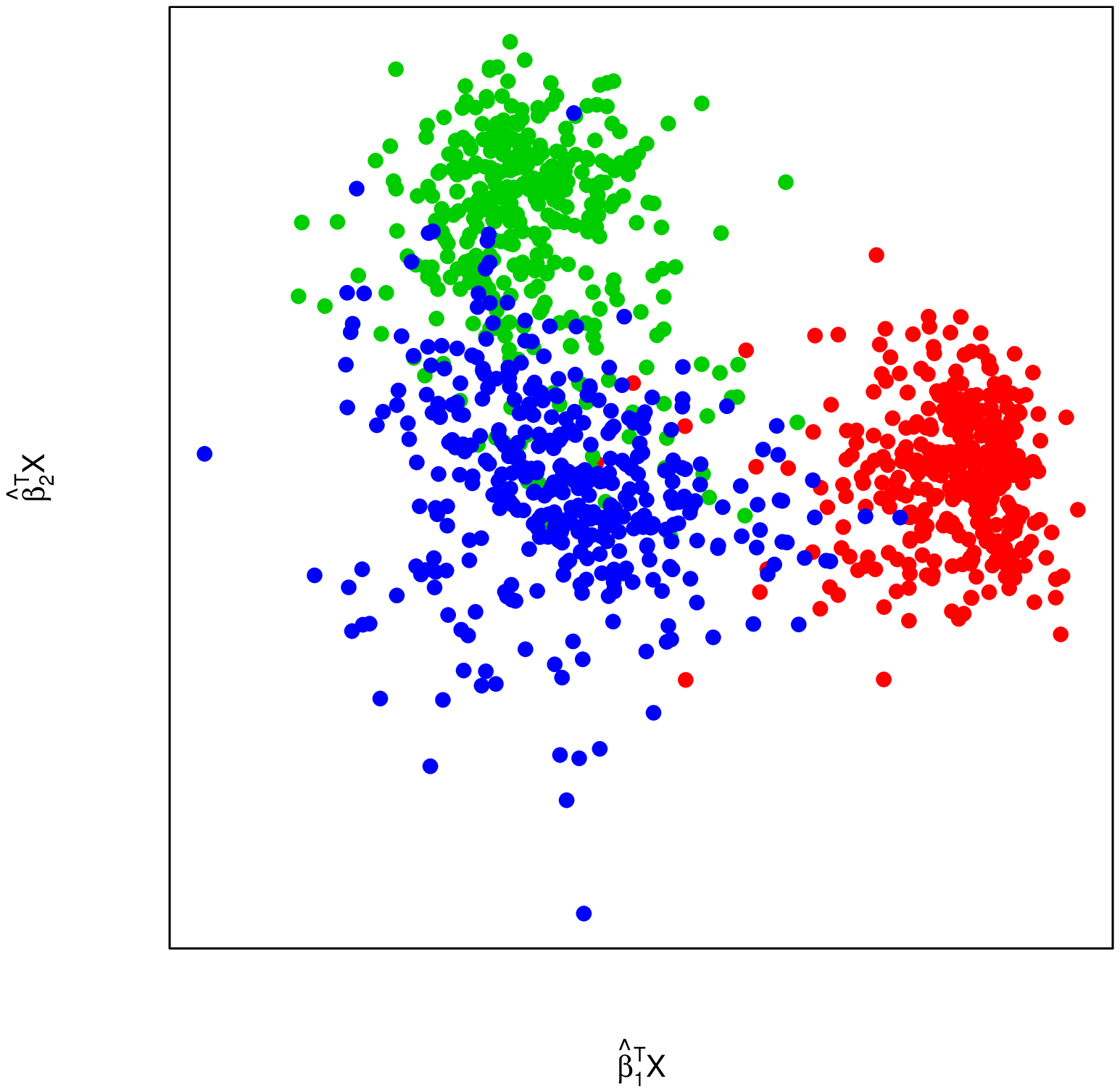}
\end{minipage}
}
\centering
\subfigure{
\centering
\begin{minipage}[c]{0.3\textwidth}
\centering
\includegraphics[height=2in,width=1.8in]{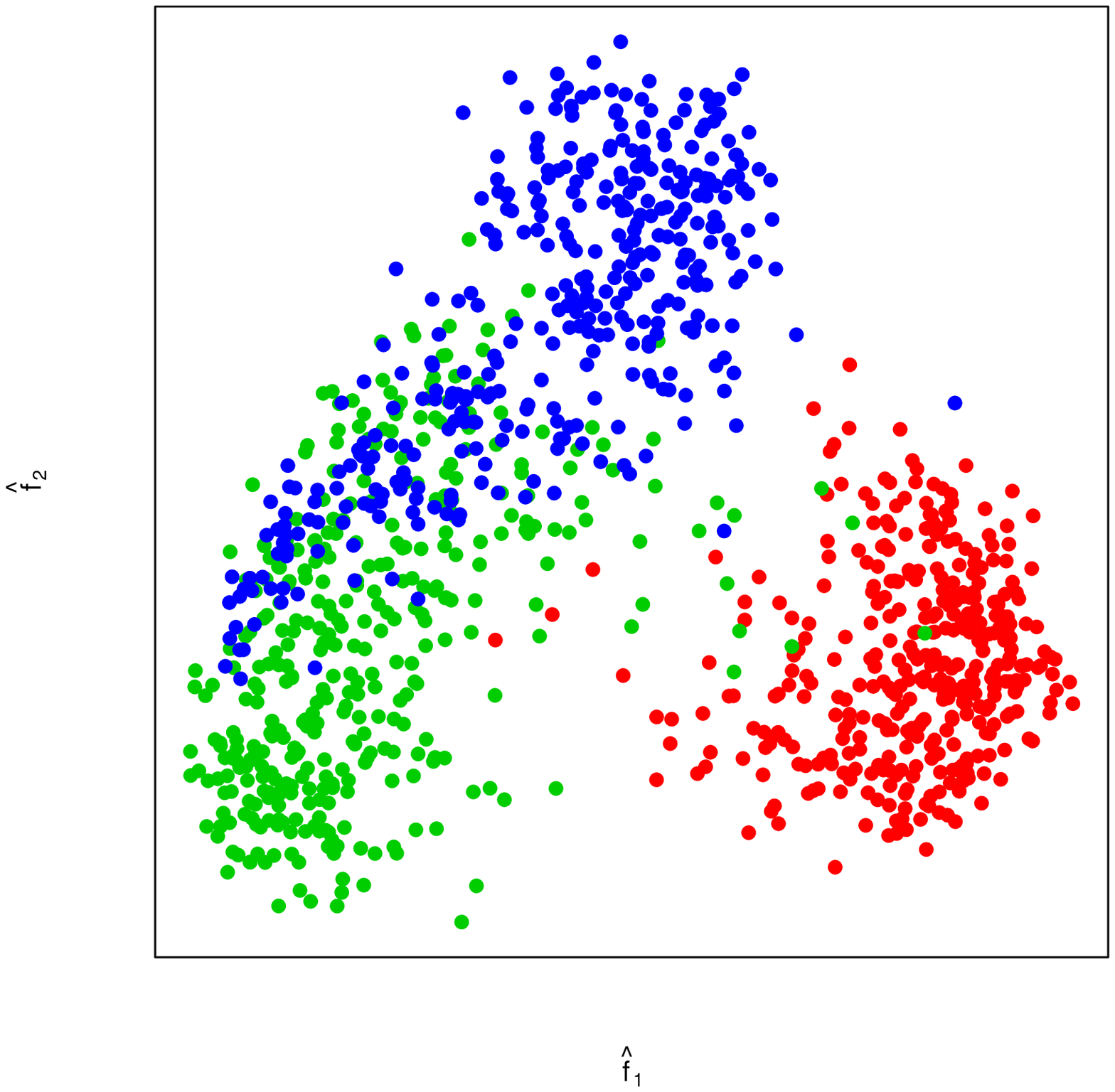}
\end{minipage}
\begin{minipage}[c]{0.3\textwidth}
\centering
\includegraphics[height=2in,width=1.8in]{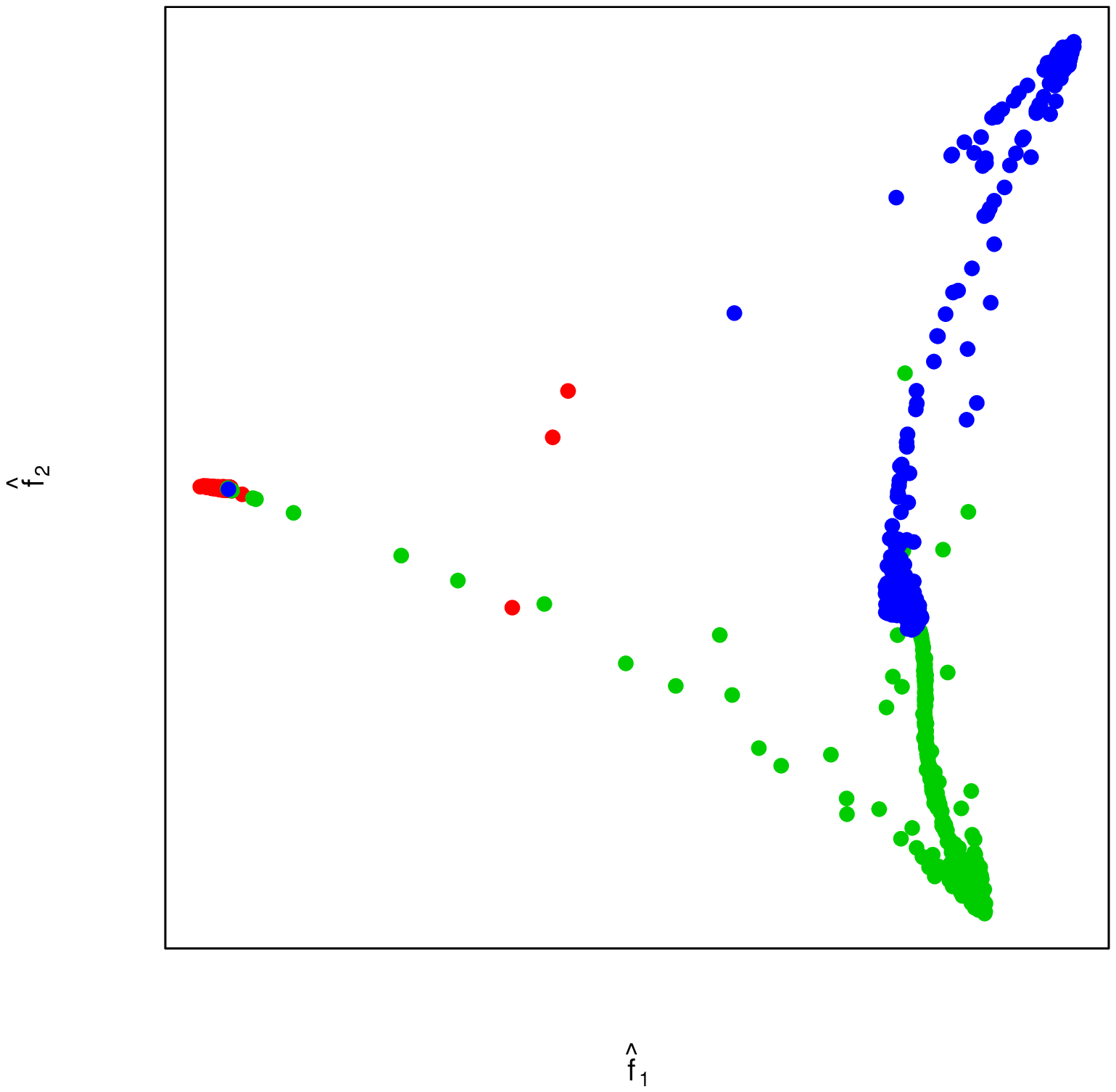}
\end{minipage}
\begin{minipage}[c]{0.3\textwidth}
\centering
\includegraphics[height=2in,width=1.8in]{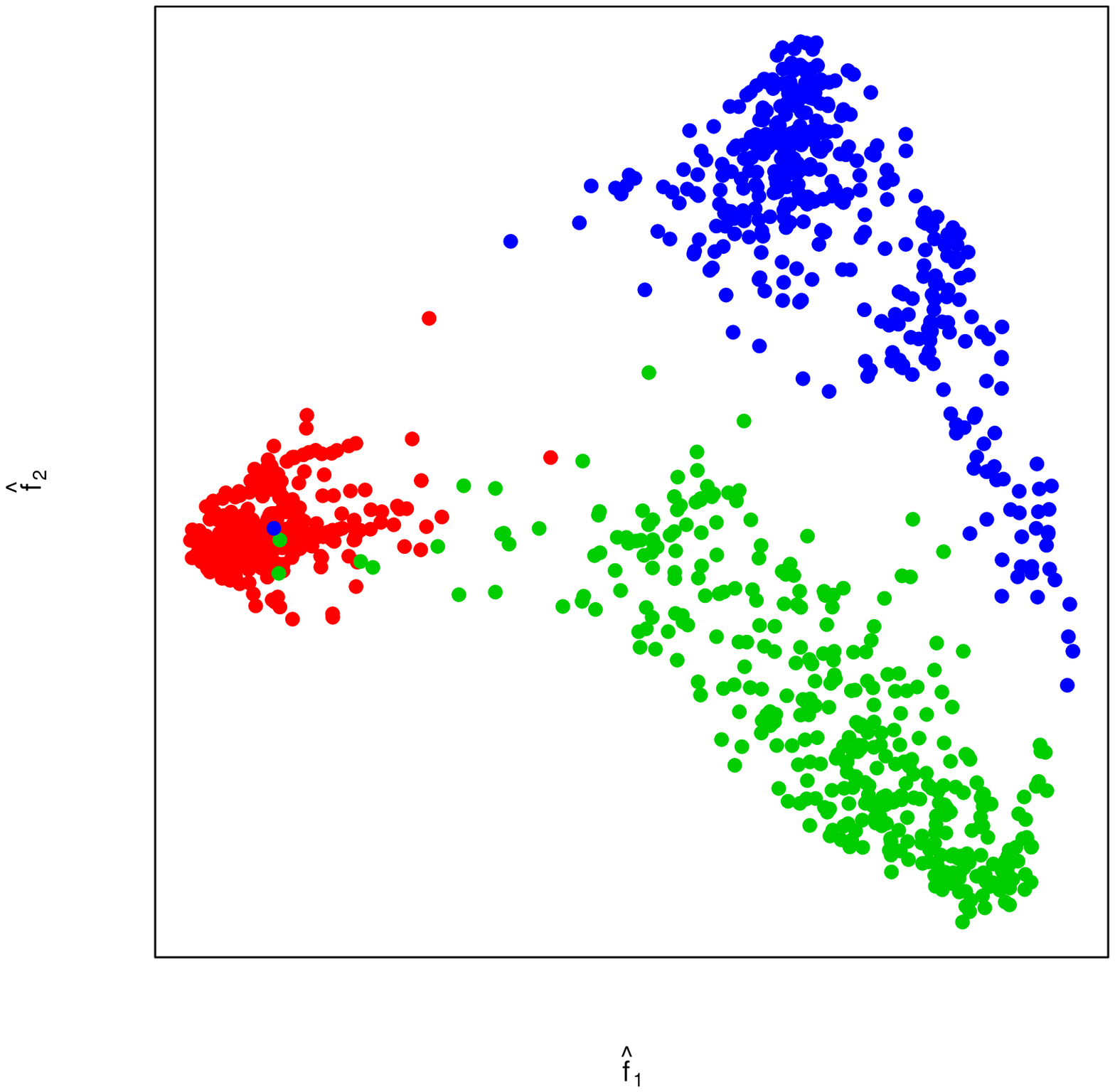}
\end{minipage}
}
\caption{The first row consists of the scatter plots for the training data via projective resampling based Slice Inverse Regression(SIR), Slice Average Variance Estimation(SAVE), Directional Regression(DR), respectively. The second row consists of the scatter plots based on our linear proposal with Euclidean distance, Locally Linear Embedding (LLE) and Isomap, respectively. The third row consists of the scatter plots based on our nonlinear proposal with Euclidean distance, LLE and Isomap, respectively. (red: $0$; green: $8$;blue: $9$.)}
\end{figure}

\begin{figure}
\centering
\setlength{\abovecaptionskip}{0.cm}
\subfigure{
\begin{minipage}[c]{0.3\textwidth}
\centering
\includegraphics[height=2in,width=1.8in]{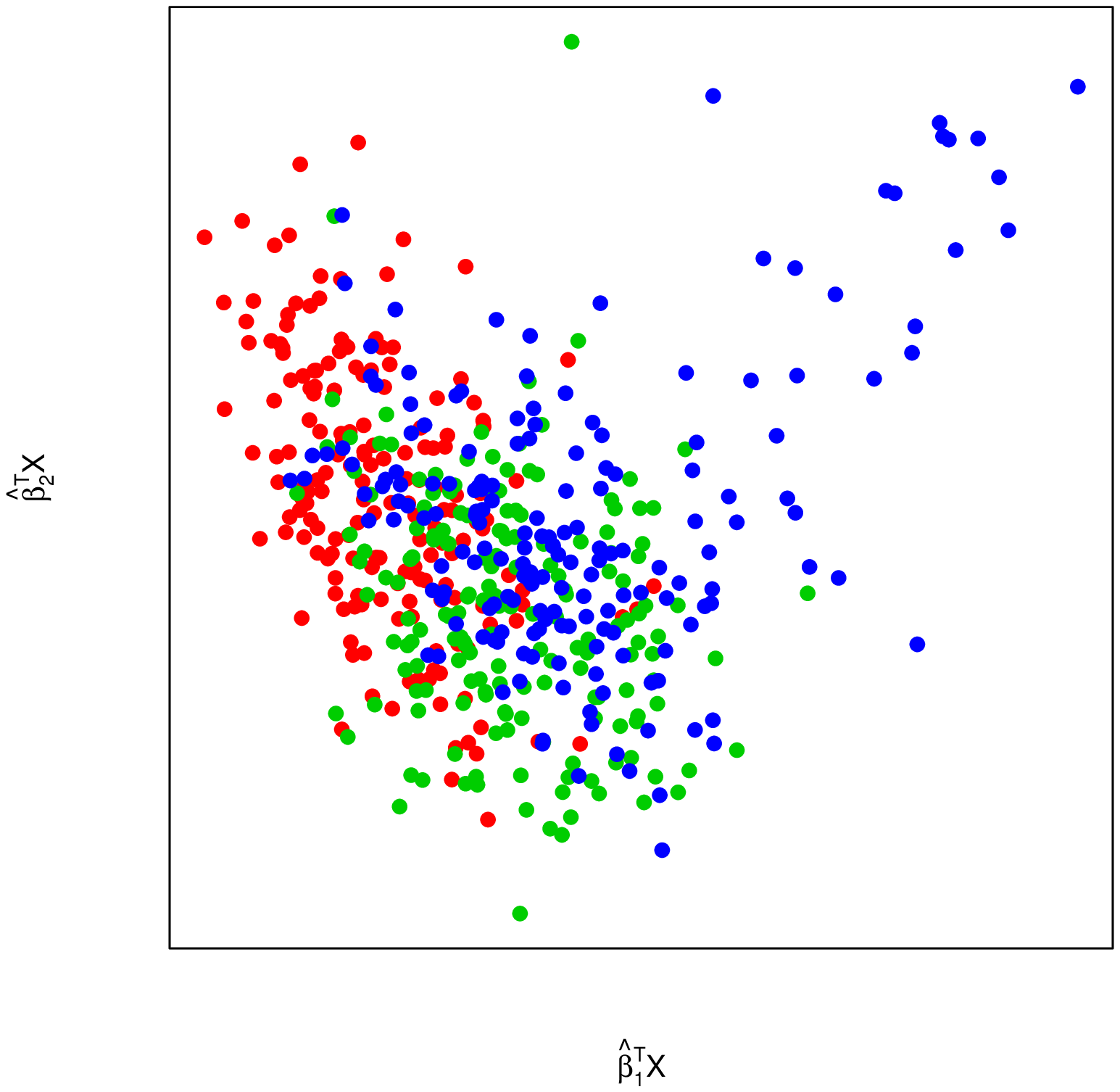}
\end{minipage}
\begin{minipage}[c]{0.3\textwidth}
\centering
\includegraphics[height=2in,width=1.8in]{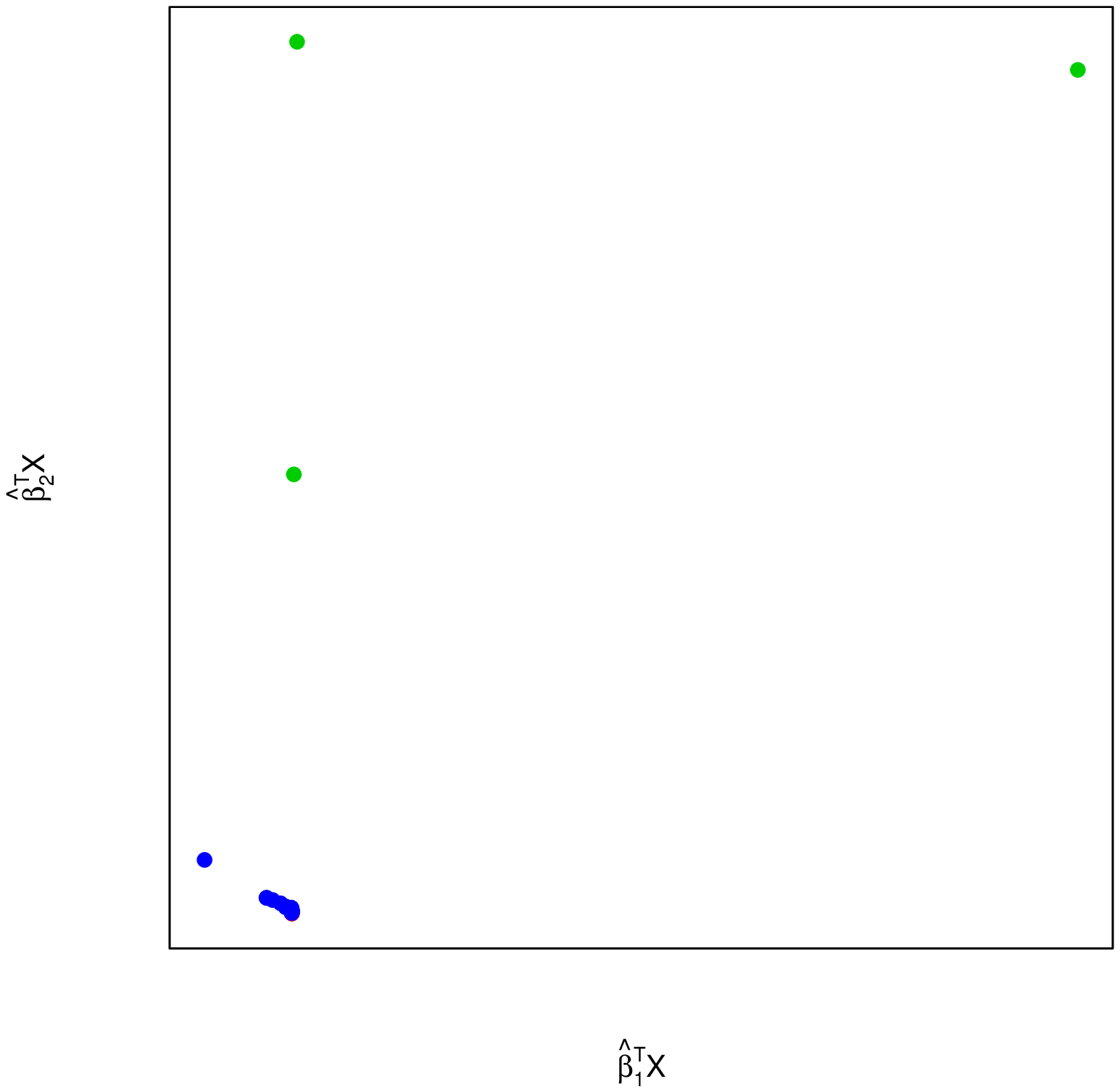}
\end{minipage}
\begin{minipage}[c]{0.3\textwidth}
\centering
\includegraphics[height=2in,width=1.8in]{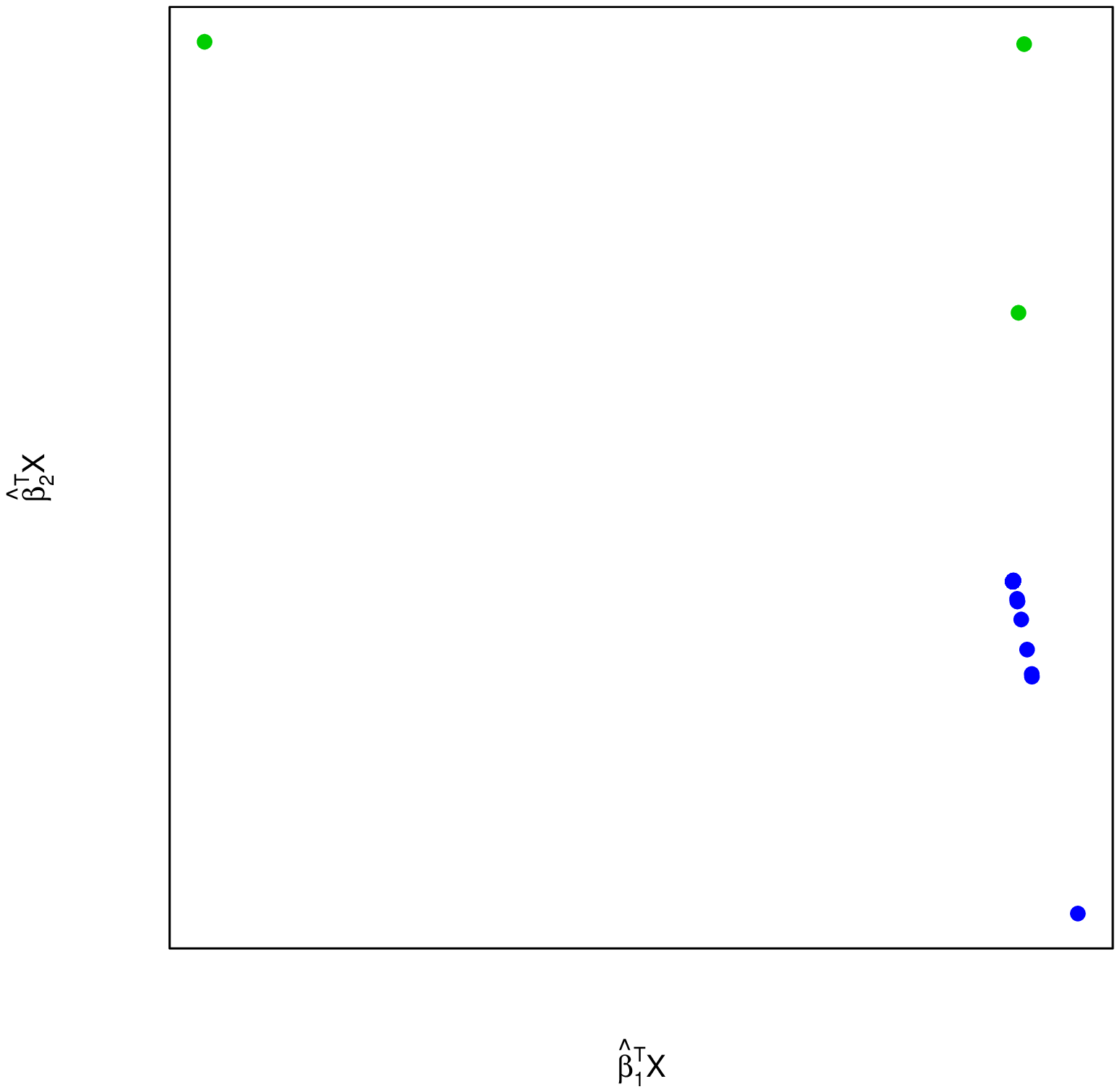}
\end{minipage}
}
\centering
\subfigure{
\begin{minipage}[c]{0.3\textwidth}
\centering
\includegraphics[height=2in,width=1.8in]{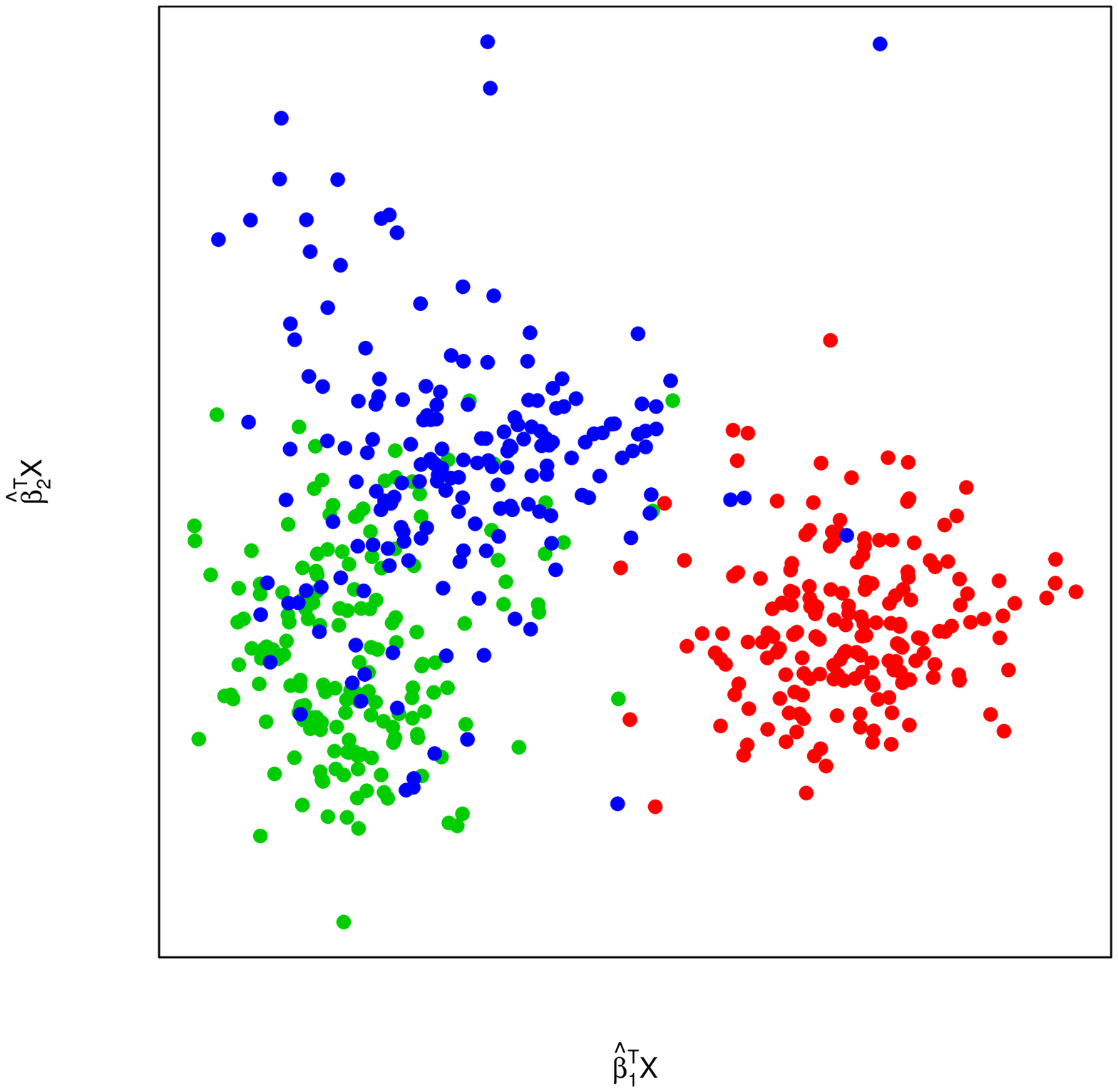}
\end{minipage}
\begin{minipage}[c]{0.3\textwidth}
\centering
\includegraphics[height=2in,width=1.8in]{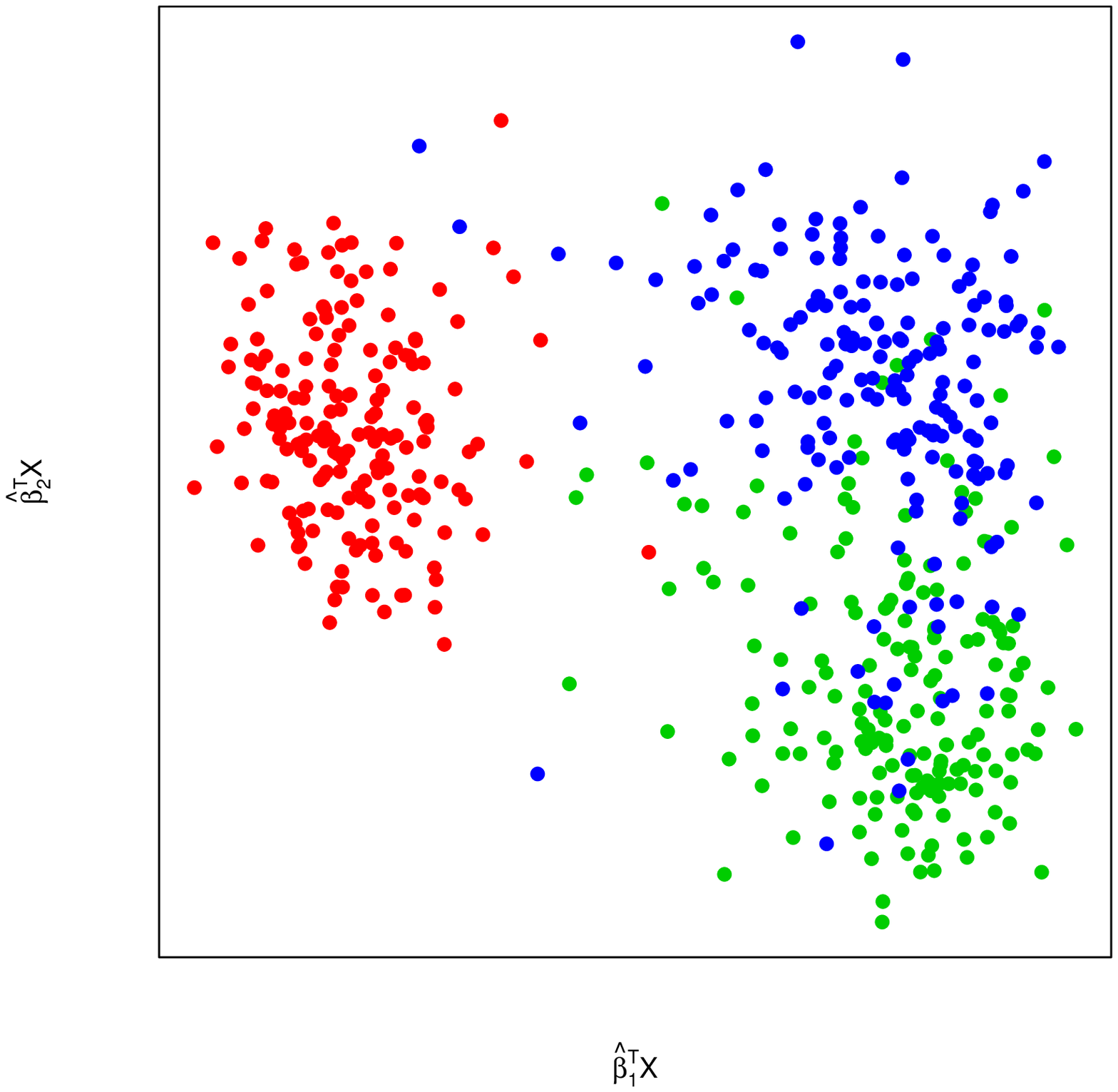}
\end{minipage}
\begin{minipage}[c]{0.3\textwidth}
\centering
\includegraphics[height=2in,width=1.8in]{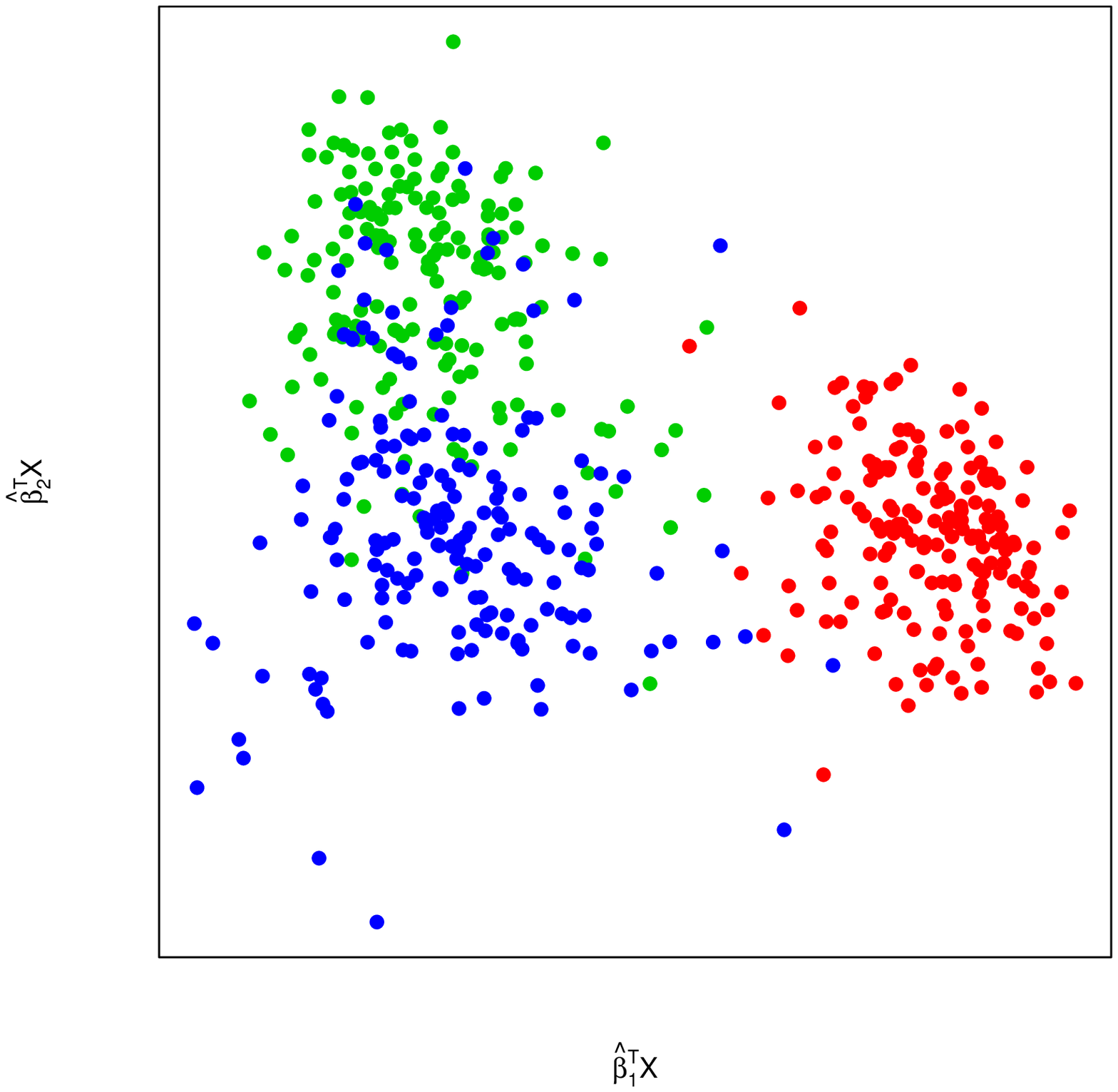}
\end{minipage}
}
\centering
\subfigure{
\begin{minipage}[c]{0.3\textwidth}
\centering
\includegraphics[height=2in,width=1.8in]{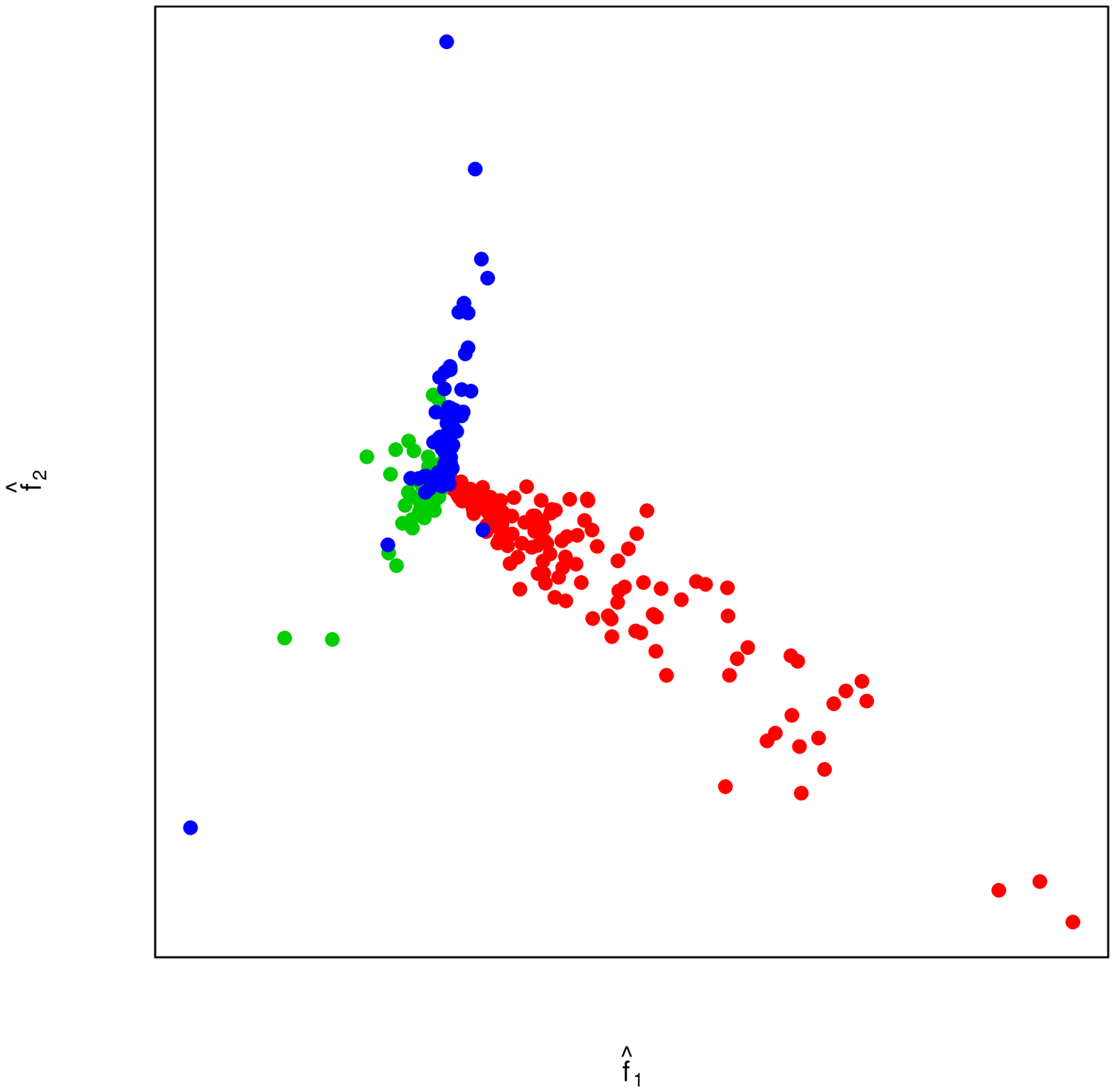}
\end{minipage}
\begin{minipage}[c]{0.3\textwidth}
\centering
\includegraphics[height=2in,width=1.8in]{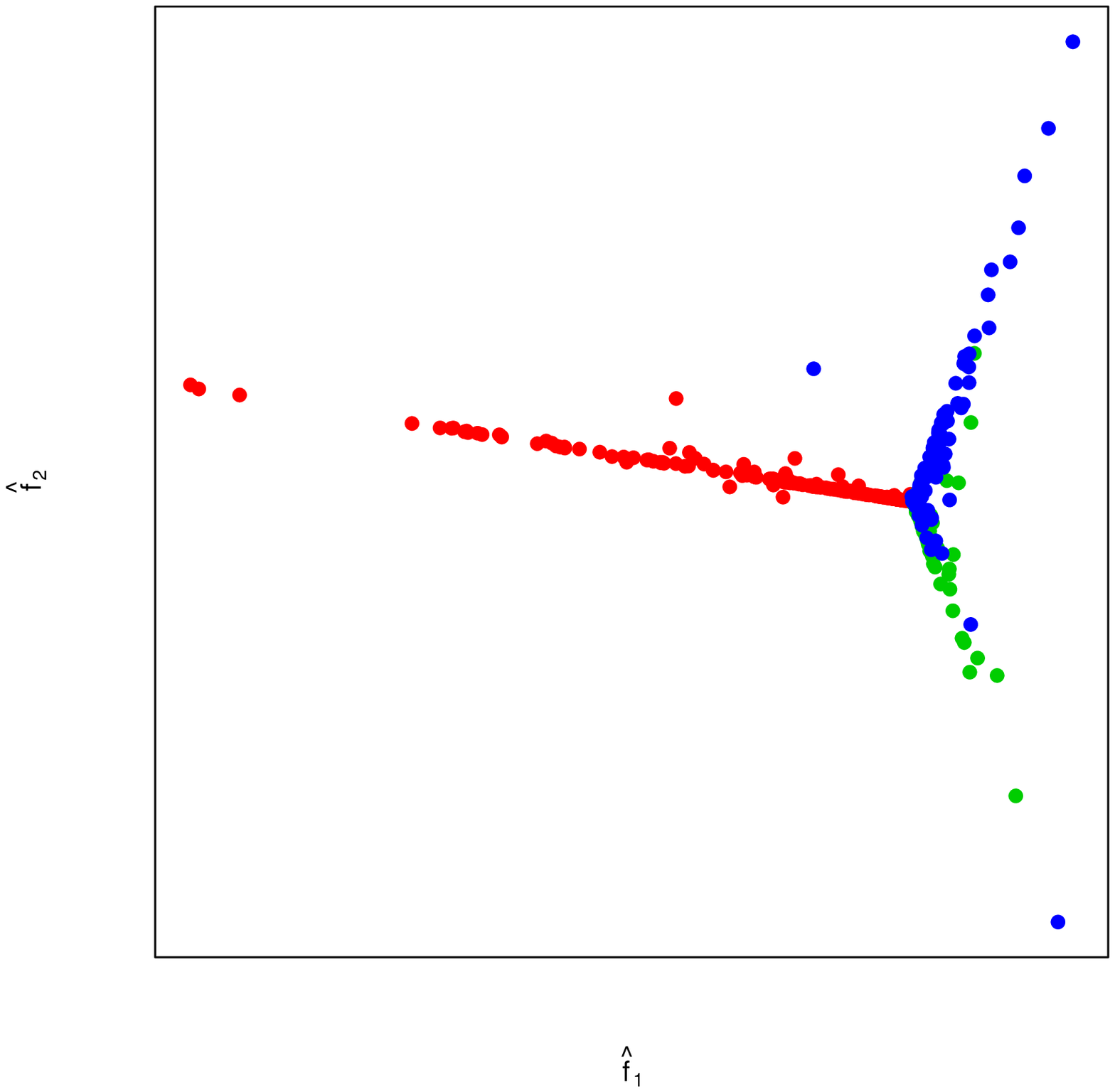}
\end{minipage}
\begin{minipage}[c]{0.3\textwidth}
\centering
\includegraphics[height=2in,width=1.8in]{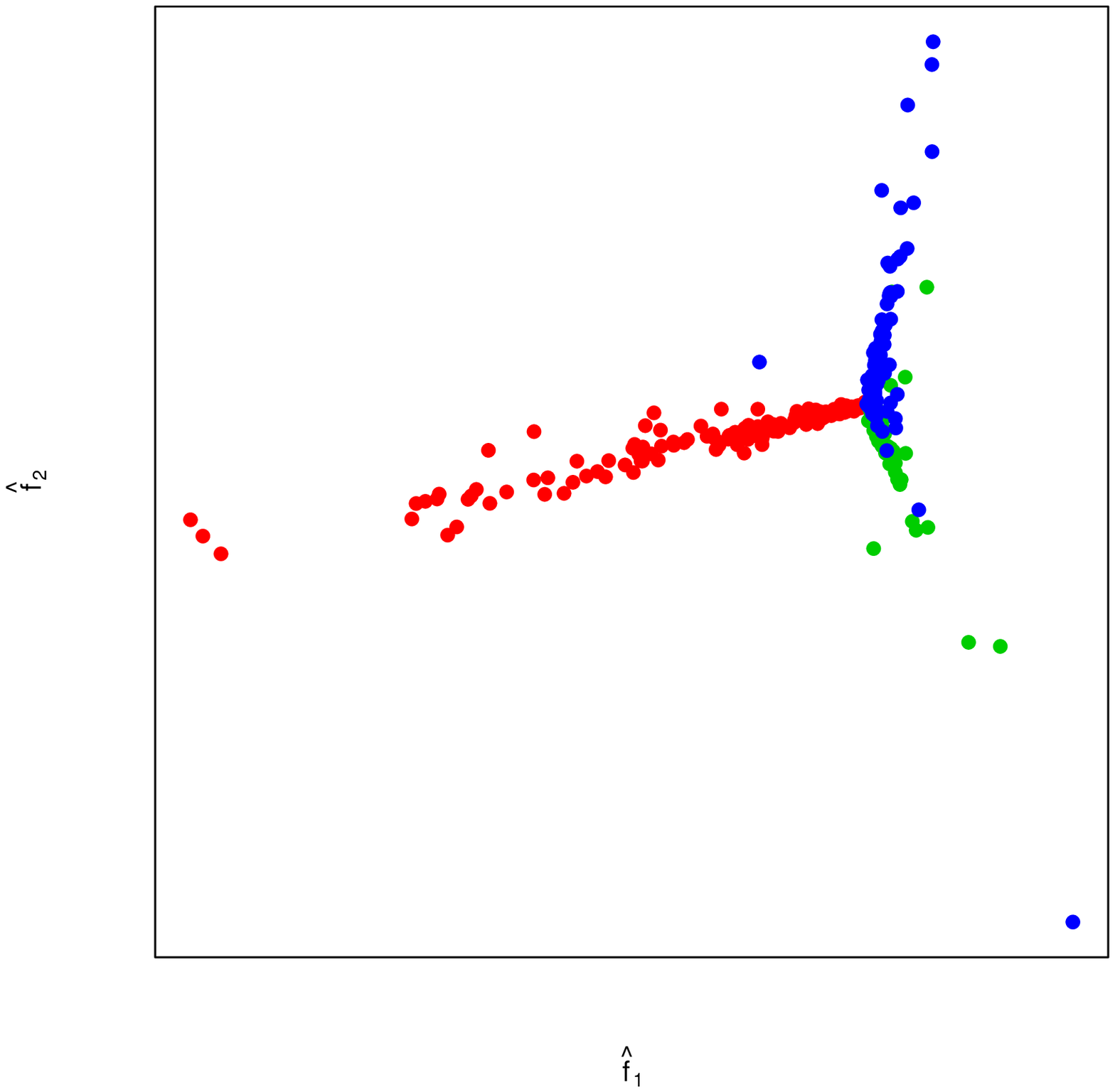}
\end{minipage}
}
\caption{The first row consists of the perspective plots for the first two sufficient predictors for the testing data by projective resampling based SIR, SAVE and DR, respectively. The second row consists of the perspective plots for the testing data based on our linear proposal with Euclidean distance, LLE and Isomap, respectively. The third row consists of the perspective plots for our nonlinear proposal with Euclidean distance, LLE and Isomap. (red: $0$; green: $8$;blue: $9$.)}
\end{figure}

To further investigate the performance of our proposals and demonstrate its use in real applications, we now extract $1670$ gray-scale images of three handwritten digit classes $\{0,8,9\}$, from the UCI Machine Learning Repository. This dataset contains a training group of size $1138$ and a testing group of size $532$. Each digit was represented by an $8\times 8$ pixel image. The $4\times 8$ upper part of each image was taken as the predictors $X$, and the $4\times 8$ bottom half was set as the responses $Y$.

We focus on sufficient dimension reduction of the $32$-dimensional
feature vectors $X$ for the training set, which serves as a preparatory step for further clustering or classification. Because the response $Y$ is also $32$-dimensional, we include the projective resampling approach (\cite{Li2008}) for comparisons, as it is the state of the art sufficient dimension reduction paradigm for multivariate response data. To be specific, we consider projective resampling approach in combination with three classical methods; sliced inverse regression, sliced average variance estimation and directional regression. And we adopt three different distance metrics for our proposals: the Euclidean distance, the distance metric learned by the Local Linear Embedding ({\cite{Roweis2000}}), the distance metric learned by the Isomap approach ({\cite{Tenenbaum2000}}).

For the training data, Figure 4 presents the scatter plots of the first two sufficient predictors estimated by projective resampling based three classical methods, as well as our proposed linear and nonlinear weighted inverse regression ensemble methods, with the cases for digits 0, 8 and 9 represented by red, green and blue dots respectively. Figure 4 shows that the linear weighted inverse regression ensemble method performs much better than the classical methods. We also observe that our nonlinear weighted inverse regression ensemble method based on the distance metric induced by the Isomap approach provides better separation both in location and variation, which should be useful for further classification.

Figure 5 presents the perspective plots for the testing data. Similar to our previous findings, our linear and nonlinear weighted inverse regression ensemble approaches again do a much better job in separating the three digit classes compared to the classical methods.  The upper parts of digits $8$ and $9$ are generally difficult to distinguish. However, our proposals provide valid and useful information for classification as seen from the scatter plots.

\section{Discussion}

When the predictor dimension is excessively large, we may consider sparse Fr\'echet dimension reduction with response as random objects and ultrahigh dimensional predictor. The proposed weighted inverse regression ensemble method can be further extended for model free variable selection and screening  (\cite{Yin2015, MSIR2016}) and minimax estimation of $\mathcal{S}_{Y|X}$ (\cite{MinimaxSIR}). The full potential of sparse Fr\'echet dimension reduction will be further explored in future research.

\bigskip

\clearpage
\appendix
\renewcommand{\theequation}{\thesection.\arabic{equation}}
\textbf{\Large{
\title{Supplement to ``Fr\'{e}chet Sufficient Dimension Reduction for Random Objects"}}}
\maketitle

\begin{abstract}
This Supplementary Material includes following topics: A. Additional results of simulation examples; B. Additional results for the application to the  handwritten digits data; C. Detailed proofs of the technical results.
\end{abstract}

\section{Additional Simulation Studies}

In addition to models I-III adopted in the main paper, we consider a new model here.

\vspace{0.2cm}
{Model IV.}
\[Y= (\cos(\varepsilon_1) \sin(f_1(X)) \sin(f_2(X)),\cos(\varepsilon_1)\sin(f_1(X))\cos(f_2(X)),\cos(\varepsilon_1)\cos(f_1(X)),\sin(\varepsilon_1)).\]

For Model IV, the response is a 4-dimensional vector and the fourth dimension can be viewed as a noise term which does not contain any valid information. Moreover, $Y\in \Omega$ where $\Omega$ is the unit sphere in $\mathbb{R}^4$. And the metric equipped with $\Omega$ is the geodesic distance $d(Y,Y')=\arccos(Y^T Y')$. Generate $\varepsilon_1$ from $N(0,0.1^2)$. We consider case (i) where $f_1(X)=\beta_1^T X$ and $f_2(X)=\beta_2^T X$ with $\beta_1=(0.5,0.5,0,\ldots,0)^T$ and $\beta_2=(0,\ldots,0,0.5,0.5)^T$ and cased (ii) where $f_1(X)=0.5(x_1^2+x_2^2)^{1/2}$ and $f_1(X)=0.5(x_{p-1}^2+x_p^2)^{1/2}$ for both linear and nonlinear Fr\'echet sufficient dimension reduction.

\vspace{0.2cm}
We design the following scenarios for the predictors for models I-IV.
\vspace{0.2cm}

{Scenario 1.} $X$ is generated from the multivariate normal distribution $N(\alpha,I_p)$, where $\alpha\sim U[0,1]^p$. The results for the four models, including linear Fr\'echet sufficient dimension reduction, nonlinear Fr\'echet sufficient dimension reduction and order determination, are presented in Table A.1-A.3.

\vspace{0.2cm}
{Scenario 2.} $X$ is generated from the multivariate normal distribution $N(\alpha,\Sigma)$, where $\alpha\sim U[0,1]^p$ and $\Sigma=\{(\sigma_{ij})_{p\times p}:\sigma_{ij}=0.2^{|i-j|}\}$. The results of the four models under scenario 2 are summarized in Table A.4-A.6.

\vspace{0.2cm}
{Scenario 3.} $x_i$ is generated from the poisson distribution $P_\lambda$ with $\lambda=1$ for $i=1,\ldots,p$. $x_i$ and $x_j$ are independent  of each other. The results of the four models under scenario 3 are presented in Table A.7 -A.9.

\vspace{0.2cm}
{Scenario 4.} $x_i$ is generated from the exponential distribution distribution $\text{Exp}(\lambda)$ with $\lambda=1$ for $i=1,\ldots,p$. $x_i$ and $x_j$ are independent  of each other. The results of the four models under scenario 4 are presented in Table A.10-A.12.

\vspace{0.2cm}
We see from these tables our proposal gives quite promising results for Fr\'chet sufficient dimension reduction and order determination. When the weighted inverse regression ensemble method fail to work with symmetric regression function, its nonlinear extension always make a good remedy. Our proposed methods along with the asymptotic theories are  robust to different model settings, except for
order determination with case (ii) of Model I under scenario 2.

\begin{table}[ht]\label{ta1}
	\def~{\hphantom{0}}
	\caption{\it The means of $r^2$ and $\rho^2$  for estimating $\mathcal{S}_{Y|X}$ among repetitions with scenario 1.  }{%
		\begin{tabular}{cccccccccccc}
			\\	
			\multicolumn{6}{c}{Model I (case i)}&\multicolumn{6}{c}{Model I (case ii)}\\[6pt]
			$(p,n)$    &	100      &   200	 &	  300	     & 	400	    &	$(p,n)$   &	 100     &   200	  &   300	&  400	    \\
			10         &	0.912	 &   0.938   &   0.955   	 &  0.967	&	    10    &	0.869	 &   0.923    &  0.954  &  0.961	\\
			           &	0.891	 &   0.917   &   0.938  	 &  0.952	&	          &	0.872	 &   0.917    &  0.947  &  0.955	\\
			20         &	0.790	 &   0.876   &   0.910	     &  0.927	&	    20	  &	0.771	 &   0.869    &  0.911  &  0.919	\\
			           &	0.787	 &   0.851   &   0.884	     &  0.905	&	          &	0.798	 &   0.862    &  0.904  &  0.909   \\
			30	       &	0.707    &   0.817   &   0.858       &  0.888   &	    30    &	0.688    &   0.830    &  0.854  &  0.894   \\
			           &	0.731	 &   0.799   &   0.827	     &  0.861	&	          &	0.752	 &   0.839    &  0.849  &  0.884  \\
			[8pt]																		
			\multicolumn{6}{c}{Model II(case i)}&\multicolumn{6}{c}{Model II(case ii)}\\[6pt]
			$(p,n)$    &	100      &   200	 &	  300	     & 	400	    &	$(p,n)$   &	 100     &   200	  &   300	 &  400	  \\
			10         &	0.915	 &   0.963   &   0.974   	 &  0.982	&	    10    &	0.471	 &   0.510    &   0.529  &  0.531	\\
			           &	0.915	 &   0.953   &   0.966  	 &  0.974	&	          &	0.375	 &   0.392    &   0.421  &  0.392	\\
			20         &	0.827	 &   0.918   &   0.946	     &  0.961	&	    20	  &	0.387	 &   0.439    &   0.451  &  0.467   \\
			           &	0.866	 &   0.914   &   0.936	     &  0.951	&	          &	0.372	 &   0.361    &   0.338  &  0.381   \\
			30	       &	0.766    &   0.874   &   0.912       &  0.937   &	    30    &	0.314   & 	 0.382    &   0.413  &  0.424   \\
			           &	0.850	 &   0.886   &   0.909	     &  0.929	&	          & 0.328	 &   0.337    &   0.339  &  0.326   \\
			[8pt]		
			\multicolumn{6}{c}{Model III (case i)}&\multicolumn{6}{c}{Model III (case ii)}\\[6pt]
			$(p,n)$    &	100      &   200	 &	  300	     & 	400	    &	$(p,n)$   &	 100     &   200	   &   300	  &  400	  \\
			10         &	0.891	 &   0.946   &   0.971   	 &  0.976	&	    10    &	0.538 	 &   0.577     &   0.586  &  0.584 	\\
			           &	0.895	 &   0.941   &   0.966  	 &  0.973	&	          &	0.476 	 &   0.508     &   0.512  &  0.514 	\\
			20         &	0.810	 &   0.898   &   0.924	     &  0.941	&	    20	  &	0.401 	 &   0.464     &   0.463  &  0.493    \\
			           &	0.843	 &   0.901   &   0.919	     &  0.934	&	          &	0.453 	 &   0.466     &   0.445  &  0.465    \\
			30	       &	0.735    &   0.856   &   0.900       &  0.922   &	    30    &	0.333    & 	 0.391     &   0.427  &  0.444   \\
			           &	0.813	 &   0.870   &   0.902	     &  0.919	&	          & 0.424 	 &   0.426     &   0.446  &  0.438    \\
            [8pt] 	
			\multicolumn{6}{c}{Model IV (case i)}&\multicolumn{6}{c}{Model IV (case ii)}\\[6pt]
			$(p,n)$    &	100      &   200	 &	  300	     & 	400	    &	$(p,n)$   &	 100     &   200	  &   300	  &  400	  \\
			10         &	0.920	 &   0.949   &   0.966   	 &  0.978	&	    10    &	0.591	 &   0.607    &   0.622   &  0.627	\\
			           &	0.921	 &   0.945   &   0.962 	     &  0.974	&	          &	0.430	 &   0.433    &   0.451   &  0.448	\\
			20         &	0.803	 &   0.877   &   0.934	     &  0.944	&	    20	  &	0.431	 &   0.484    &   0.501   &  0.520   \\
			           &	0.832	 &   0.879   &   0.930	     &  0.938	&	          &	0.387	 &   0.379    &   0.373   &  0.388   \\
			30	       &	0.733    &   0.840   &   0.888       &  0.911   &	    30    &	0.358    & 	 0.419    &   0.449   &  0.454   \\
			           &	0.806	 &   0.856   &   0.890       &  0.907	&	          & 0.365	 &   0.356    &   0.371   &  0.366   \\
	\end{tabular}}
	\caption*{
		The average $r^2$ and $\rho^2$ are listed in the first and second rows for each $p$.
	}
\end{table}


 \begin{table}[ht]\label{t3}
	\def~{\hphantom{0}}
	\caption{\it The means of $\rho^2$ for estimating $\mathcal{G}_{Y|X}$ among $100$ repetitions with scenario 1. }{%
		\begin{tabular}{ccccccccccccc}
			\\	
			&\multicolumn{4}{c}{Model II (case ii)}&\multicolumn{4}{c}{Model III (case ii)}&\multicolumn{4}{c}{Model IV (case ii)}\\[6pt]
			$(p,n)$ &  100  &  200  &  300  &  400  &  100  &  200  &  300  &  400  &  100  &  200  &  300  &  400\\
			  10    & 0.986	& 0.957 & 0.957 & 0.957 & 0.843 & 0.840 & 0.839 & 0.844 & 0.940 & 0.940 & 0.942 & 0.941 \\
			  20    & 0.954 & 0.958 & 0.956 & 0.956 & 0.843 & 0.835 & 0.837 & 0.839 & 0.941 & 0.942 & 0.942 & 0.943 \\
			  30    & 0.956 & 0.956 & 0.955 & 0.955 & 0.840 & 0.838 & 0.834 & 0.838 & 0.940 & 0.942 & 0.943 & 0.941 \\																\end{tabular}}
\end{table}

\begin{table}[h]\label{t4}
	\def~{\hphantom{0}}
	\caption{\it The number of correctly estimation for $d$ among $100$ repetitions with scenario 1. }{%
		\begin{tabular}{ccccccccccccccc}
			\\	
			\multicolumn{5}{c}{Model I (case i)}&\multicolumn{5}{c}{Model I (case ii)}&\multicolumn{5}{c}{Model II (case i)}\\[6pt]
			$(p,n)$    &	100      &   200	  &	  300	     & 	400	    &	$(p,n)$   &	 100   &   200	  &   300	    &  400	 &$(p,n)$    &	100      &   200	  &	  300	     & 	400   \\
			10         &	100  	 &   100      &   100   	 &  100	    &	    10    &	41	   &   59     &   63   	    &  70   &10         &	85  	 &   96       &   99   	     &  99	\\
			20         &	100	     &   100      &   100	     &  100	    &	    20	  &	22	   &   38     &   40	    &  53   &20         &	78  	 &   96       &   100   &  100 \\
			30	       &	100      &   100      &   100        &  100     &	    30    &	18     &   31     &   36        &  43   &30	       &	23  	 &   70       &   94    &  100  \\
			[6pt]																		
			\multicolumn{5}{c}{Model III (case i)}&\multicolumn{5}{c}{Model IV (case i)}&\\[6pt]
			$(p,n)$   &	 100   &   200	   &   300	    &  400	    &$(p,n)$    &	100    &   200	   &   300	    & 	400	 \\
                10    &	38	   &   45      &   48   	&  60       & 10        &	 74    & 	90     &   87       &  90       \\
                20    &	10	   &   21      &   29	    &  35       &20	        &	 49    & 	72     &   82       &  83   	\\
                30	  &	2      &   16      &   27       &  32       &30         &	 37    & 	65     &   70       &  67     	\\
\end{tabular}}
\end{table}

\begin{table}[ht]\label{ta5}
	\def~{\hphantom{0}}
	\caption{\it The means of $r^2$ and $\rho^2$  for estimating $\mathcal{S}_{Y|X}$ among $100$ repetitions with scenario 2.  }{%
		\begin{tabular}{cccccccccccc}
			\\	
			\multicolumn{6}{c}{Model I (case i)}&\multicolumn{6}{c}{Model I (case ii)}\\[6pt]
			$(p,n)$    &	100      &   200	 &	  300	     & 	400	    &	$(p,n)$   &	 100     &   200	  &   300	&  400	    \\
			10         &	0.854	 &   0.913   &   0.933   	 &  0.947	&	    10    &	0.840	 &   0.880    &  0.931  &  0.937	\\
			           &	0.862	 &   0.909   &   0.929  	 &  0.943	&	          &	0.872	 &   0.895    &  0.939  &  0.945	\\
			20         &	0.720	 &   0.823   &   0.848	     &  0.881	&	    20	  &	0.711	 &   0.781    &  0.846  &  0.881	\\
			           &	0.764	 &   0.830   &   0.848	     &  0.878	&	          &	0.791	 &   0.820    &  0.866  &  0.896   \\
			30	       &	0.617    &   0.766   &   0.795       &  0.837   &	    30    &	0.584    &   0.767    &  0.794  &  0.836   \\
			           &	0.689	 &   0.783   &   0.807	     &  0.839	&	          &	0.716	 &   0.816    &  0.826  &  0.856  \\
			[8pt]																		
			\multicolumn{6}{c}{Model II (case i)}&\multicolumn{6}{c}{Model II (case ii)}\\[6pt]
			$(p,n)$    &	100      &   200	 &	  300	     & 	400	    &	$(p,n)$   &	 100     &   200	  &   300	&  400	  \\
			10         &	0.853	 &   0.920   &   0.948   	 &  0.963	&	    10    &	0.249	 &   0.258    &   0.247 &  0.250	\\
			           &	0.888	 &   0.930   &   0.947  	 &  0.962	&	          &	0.131	 &   0.101    &   0.084 &  0.088	\\
			20         &	0.707	 &   0.853   &   0.900	     &  0.921	&	    20	  &	0.128	 &   0.141    &   0.121 &  0.158   \\
			           &	0.814	 &   0.881   &   0.908	     &  0.925	&	          &	0.097	 &   0.059    &   0.042 &  0.051   \\
			30	       &	0.603    &   0.753   &   0.842       &  0.883   &	    30    &	0.086    & 	 0.086    &   0.085 &  0.099   \\
			           &	0.778	 &   0.827   &   0.873   	 &  0.897	&	          & 0.100	 &   0.049    &   0.032 &  0.029   \\
			[8pt]	
			\multicolumn{6}{c}{Model III(case i)}&\multicolumn{6}{c}{Model III (case ii)}\\[6pt]
			$(p,n)$    &	100      &   200	  &	  300	    & 	400	    &	$(p,n)$   &	 100     &   200	  &   300	&  400	    \\
			10         &	0.864	 &   0.916    &  0.943      &  0.957	&	    10    &	0.365	 &   0.383    &  0.373  &  0.353\\
			           &	0.896	 &   0.930    &  0.950      &  0.961	&	          &	0.243	 &   0.192    &  0.173  &  0.154\\
			20         &	0.749	 &   0.830    &  0.898      &  0.904	&	    20    &	0.181	 &   0.189    &  0.184  &  0.182\\
			           &	0.819	 &   0.862    &  0.915      &  0.918    &	          &	0.160	 &   0.106    &  0.078  &  0.069\\
			30	       &	0.687    &   0.794    &  0.842      &  0.862    &	    30    &	0.114	 &   0.123    &  0.118  &  0.122\\
			           &	0.799	 &   0.840    &  0.873      &  0.886    &	          &	0.144	 &   0.086    &  0.063  &  0.050\\					
			[8pt]																		
			\multicolumn{6}{c}{Model IV (case i)}&\multicolumn{6}{c}{Model IV (case ii)}\\[6pt]
			$(p,n)$    &	100      &   200	 &	  300	     & 	400	    &	$(p,n)$   &	 100     &   200	  &   300	&  400	  \\
			10         &	0.844	 &   0.915   &   0.946   	 &  0.959	&	    10    &	0.595	 &   0.613    &   0.631 &  0.644	\\
			           &	0.877	 &   0.930   &   0.953  	 &  0.964	&	          &	0.504	 &   0.478    &   0.510 &  0.526	\\
			20         &	0.731	 &   0.843   &   0.881	     &  0.916	&	    20	  &	0.424	 &   0.472    &   0.505 &  0.523   \\
			           &	0.808	 &   0.873   &   0.901	     &  0.927	&	          &	0.439	 &   0.435    &   0.444 &  0.473  \\
			30	       &	0.660    &   0.803   &   0.828       &  0.877   &	    30    &	0.339    & 	 0.412    &   0.445 &  0.462   \\
			           &	0.783	 &   0.853   &   0.864	     &  0.898	&	          & 0.409	 &   0.415    &   0.411 &  0.443   \\
\end{tabular}}
\end{table}

 \begin{table}[ht]\label{t6}
	\def~{\hphantom{0}}
	\caption{\it The means of $\rho^2$ for estimating $\mathcal{G}_{Y|X}$ among $100$ repetitions with scenario 2. }{%
		\begin{tabular}{ccccccccccccc}
			\\	
			&\multicolumn{4}{c}{Model II (case ii)}&\multicolumn{4}{c}{Model III (case ii)}&\multicolumn{4}{c}{Model IV (case ii)}\\[6pt]
			$(p,n)$    &	100      &   200	 &	  300	     & 	400	       &	 100     &   200	  &   300    &  400	 &	100      &   200	 &	  300	     & 	400	 \\
			10         &	0.954	 &   0.951   &   0.951   	 &  0.951	   &	0.824	 &   0.820    &   0.825  &  0.826&	0.933	 &   0.939    &   0.938  &  0.941	  \\
			20         &	0.952	 &   0.951   &   0.952	     &  0.952	   &	0.817	 &   0.825    &   0.825	 &  0.823&	0.937	 &   0.939    &   0.941  &  0.939   \\
			30	       &	0.950    &   0.950   &   0.951       &  0.952      &	0.836    &   0.821    &   0.825  &  0.821&	0.935    & 	 0.939    &   0.940  &  0.939   \\
\end{tabular}}
\end{table}

\begin{table}[ht]\label{t7}
	\def~{\hphantom{0}}
	\caption{\it The number of correctly estimation for $d$ among $100$ repetitions with scenario 2.  }{%
		\begin{tabular}{ccccccccccccccc}
			\\	
			\multicolumn{5}{c}{Model I (case i)}&\multicolumn{5}{c}{Model I (case ii)}&\multicolumn{5}{c}{Model II (case i)}\\[6pt]
			$(p,n)$    &	100      &   200	  &	  300	     & 	400	    &	$(p,n)$   &	 100   &   200	  &   300	    &  400	&   $(p,n)$    &	100      &   200	  &	  300	     & 	400	 \\
			10         &	100  	 &   100      &   100   	 &  100	    &	    10    &	38	   &   38     &   41    	&  45   &	10         &	73  	 &   97       &   100  	     &  100  \\
			20         &	100	     &   100      &   100	     &  100	    &	    20	  &	12	   &   15     &   22	    &  35   &   20         &	45  	 &   93       &   98         &  100  \\
			30	       &	100      &   100      &   100        &  100     &	    30    &	10     &   13     &   17        &  18   &   30	       &	6  	     &   53       &   62         &  96    \\
			[3pt]																		
\multicolumn{5}{c}{Model III (case i)}& \multicolumn{5}{c}{Model IV (case i)}&\\[6pt]
			$(p,n)$   &	 100   &   200	   &   300	    &  400	&   $(p,n)$    &	100    &   200	  &	  300	 & 	400	 \\
			10        &	73	   &   80      &   81   	&  84   &    10        &	68	   &   81     &   85   	 &  87\\
			20        &	49	   &   60      &   66	    &  67   &    20	       &	46	   &   55     &   64     &  68   \\
			30	      &	43     &   53      &   58       &  59   &    30        &	31	   &   44     &   62   	 &  64   \\
\end{tabular}}
\end{table}

\begin{table}[ht]\label{ta8}
	\def~{\hphantom{0}}
	\caption{\it The means of $r^2$ and $\rho^2$  for estimating $\mathcal{S}_{Y|X}$ among $100$ repetitions with scenario 3. }{%
		\begin{tabular}{cccccccccccc}
			\\	
			\multicolumn{6}{c}{Model I (case i)}&\multicolumn{6}{c}{Model I (case ii)}\\[6pt]
			$(p,n)$    &	100      &   200	  &	  300	     & 	400	    &	$(p,n)$   &	 100     &   200	  &   300	&  400	    \\
			10         &	0.852	 &   0.903   &   0.918   	 &  0.927	&	    10    &	0.942	 &   0.952    &  0.961  &  0.972	\\
			           &	0.821	 &   0.874   &   0.889  	 &  0.899	&	          &	0.937	 &   0.942    &  0.953  &  0.965	\\
			20         &	0.756	 &   0.843   &   0.846	     &  0.874	&	    20	  &	0.869	 &   0.915    &  0.928  &  0.945	\\
			           &	0.734	 &   0.802   &   0.805	     &  0.834	&	          &	0.877	 &   0.908    &  0.922  &  0.935   \\
			30	       &	0.659    &   0.757   &   0.805       &  0.832   &	    30    &	0.846    &   0.880    &  0.898  &  0.910   \\
			           &	0.669	 &   0.720   &   0.758	     &  0.786	&	          &	0.870	 &   0.882    &  0.890  &  0.899  \\
			[8pt]																		
			\multicolumn{6}{c}{Model II (case i)}&\multicolumn{6}{c}{Model II (case ii)}\\[6pt]
			$(p,n)$    &	100      &   200	 &	  300	     & 	400	    &	$(p,n)$   &	 100     &   200	  &   300	&  400	  \\
			10         &	0.932	 &   0.965   &   0.974   	 &  0.981	&	    10    &	0.562	 &   0.562    &   0.582 &  0.570	\\
			           &	0.925	 &   0.957   &   0.966  	 &  0.975	&	          &	0.584	 &   0.568    &   0.571 &  0.552	\\
			20         &	0.852	 &   0.927   &   0.950	     &  0.967	&	    20	  &	0.504	 &   0.523    &   0.517 &  0.528   \\
			           &	0.877	 &   0.922   &   0.940	     &  0.958	&	          &	0.566	 &   0.572    &   0.566 &  0.548   \\
			30	       &	0.776    &   0.888   &   0.926       &  0.945   &	    30    &	0.447    & 	 0.502    &   0.513 &  0.513  \\
			           &	0.853	 &   0.892   &   0.919	     &  0.936	&	          & 0.565	 &   0.565    &   0.555 &  0.553   \\
			[8pt]	
			\multicolumn{6}{c}{Model III (case i)}&\multicolumn{6}{c}{Model III (case ii)}\\[6pt]
			$(p,n)$    &	100      &   200	  &	  300	   & 	400	    &	$(p,n)$   &	 100     &   200	  &   300	&  400	  \\
			10         &	0.981	 &   0.990    &   0.995    &  0.996	    &	    10    &	0.568	 &   0.577    &   0.578 &  0.581	 \\
			           &	0.981	 &   0.989    &   0.993    &  0.995	    &	          &	0.668	 &   0.653    &   0.646 &  0.649	\\
			20         &	0.954	 &   0.979    &   0.986    &  0.991     &	    20    &	0.497	 &   0.522    &   0.520 &  0.525	\\
			           &	0.962	 &   0.979    &   0.985    &  0.989     &	          &	0.663	 &   0.638    &   0.637 &  0.653	\\
			30	       &	0.928   & 	 0.969    &   0.979    &  0.984     &	    30    &	0.476	 &   0.499    &   0.512 &  0.507	\\
			           &    0.951	 &   0.971    &   0.978    &  0.983     &	          &	0.657	 &   0.647    &   0.648 &  0.639	\\
			[8pt]																		
			\multicolumn{6}{c}{Model IV (case i)}&\multicolumn{6}{c}{Model IV (case ii)}\\[6pt]
			$(p,n)$    &	100      &   200	 &	  300	     & 	400	    &	$(p,n)$   &	 100     &   200	  &   300	&  400	  \\
			10         &	0.984	 &   0.992   &   0.995   	 &  0.996	&	    10    &	0.642	 &   0.657    &   0.659 &  0.654	\\
			           &	0.984	 &   0.991   &   0.994  	 &  0.996	&	          &	0.619	 &   0.605    &   0.605 &  0.594	\\
			20         &	0.959	 &   0.982   &   0.987	     &  0.991	&	    20	  &	0.562	 &   0.574    &   0.571 &  0.569   \\
			           &	0.966	 &   0.981   &   0.986	     &  0.990	&	          &	0.611	 &   0.598    &   0.581 &  0.569   \\
			30	       &	0.928    &   0.969   &   0.981       &  0.985   &	    30    &	0.517    & 	 0.532    &   0.542 &  0.536   \\
			           &	0.951	 &   0.971   &   0.980	     &  0.984	&	          & 0.596	 &   0.571    &   0.571 &  0.550   \\
\end{tabular}}
\end{table}

 \begin{table}[ht]\label{t9}
	\def~{\hphantom{0}}
	\caption{\it The means of $\rho^2$ for estimating $\mathcal{G}_{Y|X}$ among $100$ repetitions with scenario 3.  }{%
		\begin{tabular}{ccccccccccccc}	
			&\multicolumn{4}{c}{Model II (case ii)}&\multicolumn{4}{c}{Model III (case ii)}&\multicolumn{4}{c}{Model IV (case ii)}\\[6pt]
			$(p,n)$    &	100      &   200	 &	  300	     & 	400	    &    100     &   200	  &   300   &  400	&	100      &   200	  &	  300	     & 	400  \\
			10         &	0.886	 &   0.888   &   0.889   	 &  0.8911	&	0.773	 &   0.780    &   0.780 &  0.776&	0.913	 &   0.915    &   0.918 &  0.914\\
			20         &	0.881	 &   0.885   &   0.893	     &  0.8848	&	0.770	 &   0.774    &   0.777 &  0.774&	0.917	 &   0.915    &   0.918 &  0.916   \\
			30	       &	0.901    &   0.892   &   0.886       &  0.8914  &	0.777    &   0.773    &   0.774 &  0.777&	0.915    & 	 0.918    &   0.918 &  0.918   \\
	\end{tabular}}
\end{table}

\begin{table}[ht]\label{t10}
	\def~{\hphantom{0}}
	\caption{\it The number of correctly estimation for $d$ among $100$ repetitions with scenario 3.  }{%
		\begin{tabular}{ccccccccccccccc}
			\\	
			\multicolumn{5}{c}{Model I (case i)}&\multicolumn{5}{c}{Model I (case ii)}&\multicolumn{5}{c}{Model II (case i)}\\[6pt]
			$(p,n)$    &	100      &   200	  &	  300	     & 	400	    &	$(p,n)$   &	 100   &   200	  &   300	    &  400	& $(p,n)$    &	100      &   200	  &	  300	     & 	400	   \\
			10         &	100  	 &   100      &   100   	 &  100	    &	    10    &	100	   &   100    &   100   	&  100  &10         &	90  	 &   99       &   98   	     &  99  	\\
			20         &	100	     &   100      &   100	     &  100	    &	    20	  &	100	   &   100    &   100	    &  100  & 20         &	76  	 &   95       &   99	     &  100 \\
			30	       &	100      &   100      &   100        &  100     &	    30    &	97     &   98     &   100       &  100  & 30	       &	28  	 &   81       &   96         &  100  \\
			[8pt]																		
\multicolumn{5}{c}{Model III (case i)}&\multicolumn{5}{c}{Model IV (case i)}&\\[6pt]
				$(p,n)$   &	 100   &   200	&   300	    &  400	 &   $(p,n)$    &	100    &   200	  &	  300	    & 	400	\\
			        10    &	98	   &   99   &   100   	&  100   &     10       &	100	   &   100    &   100   	&  100   \\
			        20    &	98	   &   97   &   98	    &  100	 &     20	    &	100	   &   100    &   100   	&  100   \\
			        30	  &	96     &   96   &   99      &  99    &     30       &	100	   &   100    &   100   	&  100   \\
\end{tabular}}
\end{table}

\begin{table}[ht]\label{ta11}
	\def~{\hphantom{0}}
	\caption{\it The means of $r^2$ and $\rho^2$  for estimating $\mathcal{S}_{Y|X}$ among $100$ repetitions with scenario 4. }{%
		\begin{tabular}{cccccccccccc}
			\\	
			\multicolumn{6}{c}{Model I (case i)}&\multicolumn{6}{c}{Model I (case ii)}\\[6pt]
			$(p,n)$    &	100      &   200	 &	  300	     & 	400	    &	$(p,n)$   &	 100     &   200	  &   300	&  400	    \\
			10         &	0.792	 &   0.830   &   0.865   	 &  0.898	&	    10    &	0.908	 &   0.932    &  0.931  &  0.932	\\
			           &	0.775	 &   0.802   &   0.835  	 &  0.869	&	          &	0.909	 &   0.929    &  0.925  &  0.925	\\
			20         &	0.682	 &   0.755   &   0.758	     &  0.787	&	    20	  &	0.839	 &   0.878    &  0.884  &  0.890	\\
			           &	0.670	 &   0.713   &   0.710	     &  0.743	&	          &	0.859	 &   0.876    &  0.876  &  0.879   \\
			30	       &	0.608    &   0.687   &   0.706       &  0.739   &	    30    &	0.809    &   0.838    &  0.845  &  0.874   \\
			           &	0.618	 &   0.654   &   0.660	     &  0.688	&	          &	0.851	 &   0.842    &  0.836  &  0.863  \\
			[8pt]																		
			\multicolumn{6}{c}{Model II (case i)}&\multicolumn{6}{c}{Model II (case ii)}\\[6pt]
			$(p,n)$    &	100      &   200	 &	  300	     & 	400	    &	$(p,n)$   &	 100     &   200	  &   300	 &  400	  \\
			10         &	0.907	 &   0.947   &   0.966   	 &  0.975	&	    10    &	0.580	 &   0.589    &   0.585  &  0.585	\\
			           &	0.913	 &   0.942   &   0.957  	 &  0.967	&	          &	0.640	 &   0.616    &   0.610  &  0.601	\\
			20         &	0.808	 &   0.906   &   0.937	     &  0.955	&	    20	  &	0.505	 &   0.531    &   0.531  &  0.536   \\
			           &	0.853	 &   0.904   &   0.929	     &  0.944	&	          &	0.604	 &   0.613    &   0.612  &  0.598   \\
			30	       &	0.725    &   0.866   &   0.908       &  0.935   &	    30    &	0.459    & 	 0.500    &   0.510  &  0.524   \\
			           &	0.825	 &   0.879   &   0.904	     &  0.925	&	          & 0.581	 &   0.598    &   0.600  &  0.605   \\
			[8pt]	
			\multicolumn{6}{c}{Model III (case i)}&\multicolumn{6}{c}{Model III (case ii)}\\[6pt]
			$(p,n)$    &	100      &   200	  &	  300	     & 	400	    &	$(p,n)$   &	 100     &   200	  &   300	&  400	  \\
			10         &	0.970	 &   0.984   &   0.989   	 &  0.992	&	    10    &	0.620	 &   0.634    &   0.652 &  0.660	\\
			           &	0.974	 &   0.984   &   0.988  	 &  0.991	&	          &	0.710	 &   0.713    &   0.716 &  0.713	\\
			20         &	0.935	 &   0.969   &   0.981	     &  0.984	&	    20	  &	0.535	 &   0.551    &   0.561 &  0.567   \\
			           &	0.952	 &   0.971   &   0.980	     &  0.983	&	          &	0.711	 &   0.709    &   0.694 &  0.700   \\
			30	       &	0.910    &   0.954   &   0.972       &  0.978   &	    30    &	0.501    & 	 0.528    &   0.536 &  0.532   \\
			           &	0.946	 &   0.960   &   0.973	     &  0.977	&	          & 0.713	 &   0.707    &   0.693 &  0.686   \\
			[8pt]																		
			\multicolumn{6}{c}{Model IV (case i)}&\multicolumn{6}{c}{Model IV (case ii)}\\[6pt]
			$(p,n)$    &	100      &   200	 &	  300	     & 	400	    &	$(p,n)$   &	 100     &   200	  &   300	&  400	  \\
			10         &	0.99	 &   0.982   &   0.990   	 &  0.992	&	    10    &	0.687	 &   0.702    &   0.697 &  0.700	\\
			           &	0.974	 &   0.982   &   0.989  	 &  0.991	&	          &	0.675	 &   0.656    &   0.653 &  0.652	\\
			20         &	0.931	 &   0.972   &   0.981	     &  0.984	&	    20	  &	0.587	 &   0.606    &   0.599 &  0.612   \\
			           &	0.950	 &   0.974   &   0.981	     &  0.983	&	          &	0.665	 &   0.639    &   0.622 &  0.629   \\
			30	       &	0.909    &   0.956   &   0.971       &  0.979   &	    30    &	0.537    & 	 0.554    &   0.559 &  0.558   \\
			           &	0.945	 &   0.962   &   0.972	     &  0.978	&	          & 0.662	 &   0.634    &   0.622 &  0.607  \\
\end{tabular}}
\end{table}

 \begin{table}[ht]\label{t12}
	\def~{\hphantom{0}}
	\caption{\it The means of $\rho^2$ for estimating $\mathcal{G}_{Y|X}$ among $100$ repetitions with scenario 4.  }{%
		\begin{tabular}{ccccccccccccc}
			\\	
			&\multicolumn{4}{c}{Model II (case ii)}&\multicolumn{4}{c}{Model III (case ii)}&\multicolumn{4}{c}{Model IV (case ii)}\\[6pt]
			$(p,n)$    &	100      &   200	 &	  300	     & 	400	    &	 100     &   200	  &   300   &  400	&	100      &   200	 &	  300	     & 	400  \\
			10         &	0.826	 &   0.835   &   0.826   	 &  0.828	&	0.775	 &   0.771    &   0.779 &  0.774&	0.893	 &   0.897    &   0.899 &  0.902\\
			20         &	0.825	 &   0.828   &   0.826	     &  0.831	&	0.892	 &   0.897    &   0.898 &  0.898&	0.777	 &   0.779    &   0.779	     &  0.777   \\
			30	       &	0.829    &   0.822   &   0.828       &  0.822   &	0.775    &   0.773    &   0.781 &  0.778&	0.893    & 	 0.897    &   0.899 &  0.899\\												
	\end{tabular}}
\end{table}

\begin{table}[ht]\label{t2}
	\def~{\hphantom{0}}
	\caption{\it The number of correctly estimation for $d$ among $100$ repetitions with scenario 4. }{%
		\begin{tabular}{ccccccccccccccc}
			\\	
			\multicolumn{5}{c}{Model I (case i)}&\multicolumn{5}{c}{Model I (case ii)}&\multicolumn{5}{c}{Model II (case i)}\\[6pt]
			$(p,n)$    &	100      &   200	  &	  300	     & 	400	    &	$(p,n)$   &	 100   &   200	  &   300	    &  400	&  $(p,n)$    &	100      &   200	  &	  300	     & 	400  \\
			10         &	100  	 &   100      &   100   	 &  100	    &	    10    &	98	   &   99     &   100   	&  100  &	10         &	84  	 &   100      &   99  &100  	\\
			20         &	100	     &   100      &   100	     &  100	    &	    20	  &	98	   &   98     &   100	    &  100  &   20         &	80  	 &   99       &   100	&100    \\
			30	       &	100      &   100      &   100        &  100     &	    30    &	94     &   99     &   100       &  100  &   30	       &	29  	 &   92       &   99 &100\\
			[8pt]																		
\multicolumn{5}{c}{Model III (case i)}&\multicolumn{5}{c}{Model IV (case i)}&\\[6pt]
			$(p,n)$   &	 100   &   200	 &   300	 &  400	  &  $(p,n)$   &	100    &   200	  &	  300	    & 	400  \\
				10    &	100	   &   99    &   100   	 &  99    &    10      &	100	   &   100    &   100   	&  100\\
				20    &	96	   &   100   &   100	 &  100	  &    20	   &	99	   &   100    &   100   	&  100 \\
				30	  &	94     &   94    &   95      &  98    &    30      &	97	   &   100    &   100   	&  100  \\
\end{tabular}}
\end{table}

\clearpage
\section{Additional Results for the Handwritten Digits Data}
\subsection{Application to Handwritten Digit Classes $\{1,4,7\}$}

To further investigate the performance of our proposals and demonstrate its use in real applications, we now extract $1705$ gray-scale images of three handwritten digit classes $\{1,4,7\}$ in Figure 1, from the UCI Machine Learning Repository. This dataset contains a training group of size $1163$ and a testing group of size $542$. Each digit was represented by an $8\times 8$ pixel image. The $4\times 8$ bottom part of each image was taken as the predictors $X$, and the $4\times 8$ upper half was set as the responses $Y$.
\begin{figure}[htbp]
    \centering
    \subfigure{
         \includegraphics[height=1.3in,width=5in]{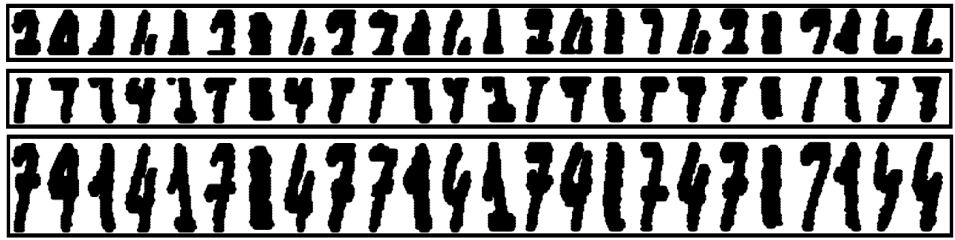}
    }
    \caption{The first row consists of the responses $Y$ which are the upper halves of the image digits $\{1,4,7\}$; The second row consists of the predictors $X$ which are the bottom halves of the image digits $\{1,4,7\}$; The third row consists of the whole image digits $\{1,4,7\}$.}
\end{figure}

\begin{figure}[ht]
\centering
\setlength{\abovecaptionskip}{0.cm}
\subfigure{
\begin{minipage}[c]{0.3\textwidth}
\centering
\includegraphics[height=2in,width=1.8in]{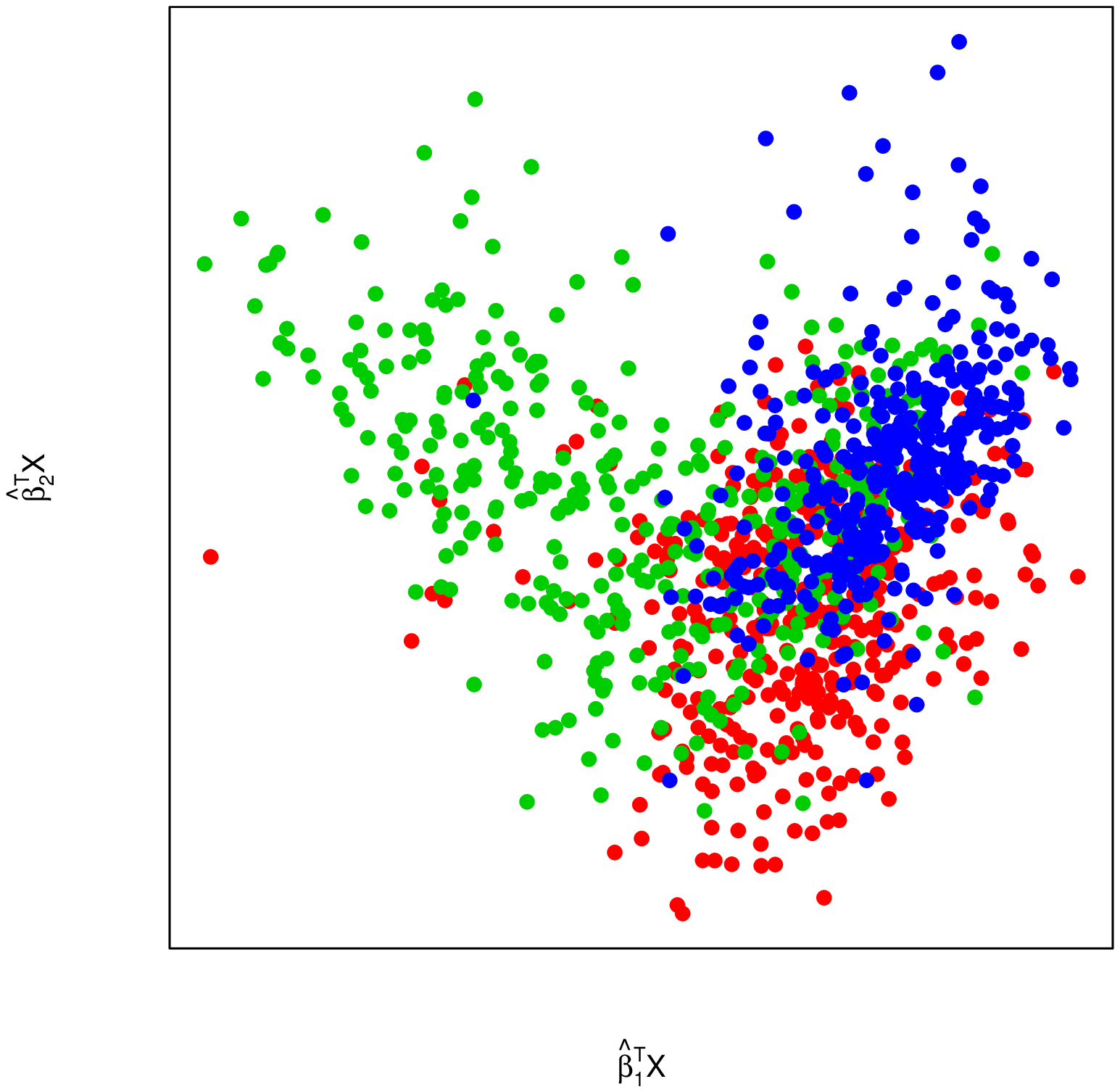}
\end{minipage}
\begin{minipage}[c]{0.3\textwidth}
\centering
\includegraphics[height=2in,width=1.8in]{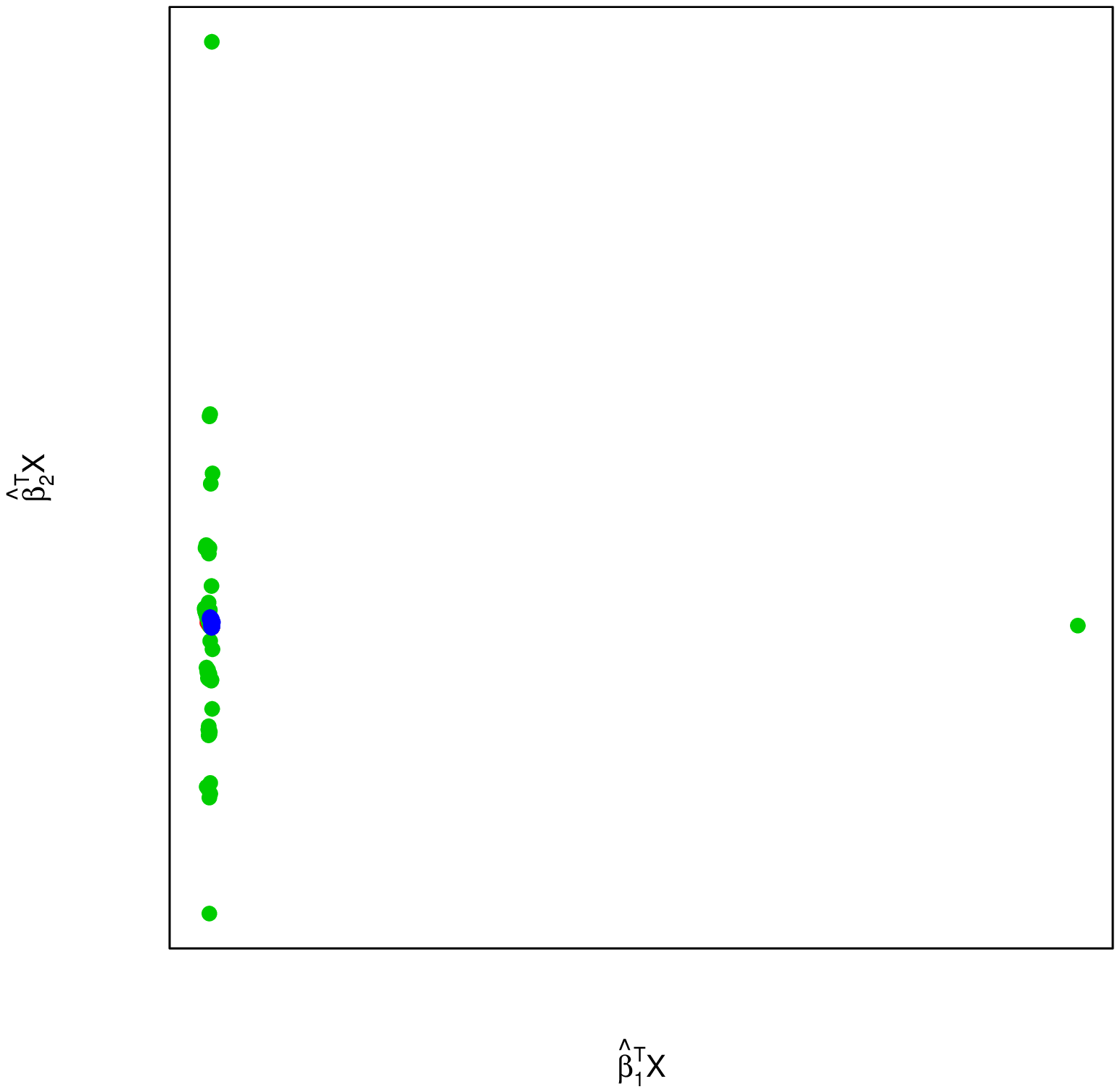}
\end{minipage}
\begin{minipage}[c]{0.3\textwidth}
\centering
\includegraphics[height=2in,width=1.8in]{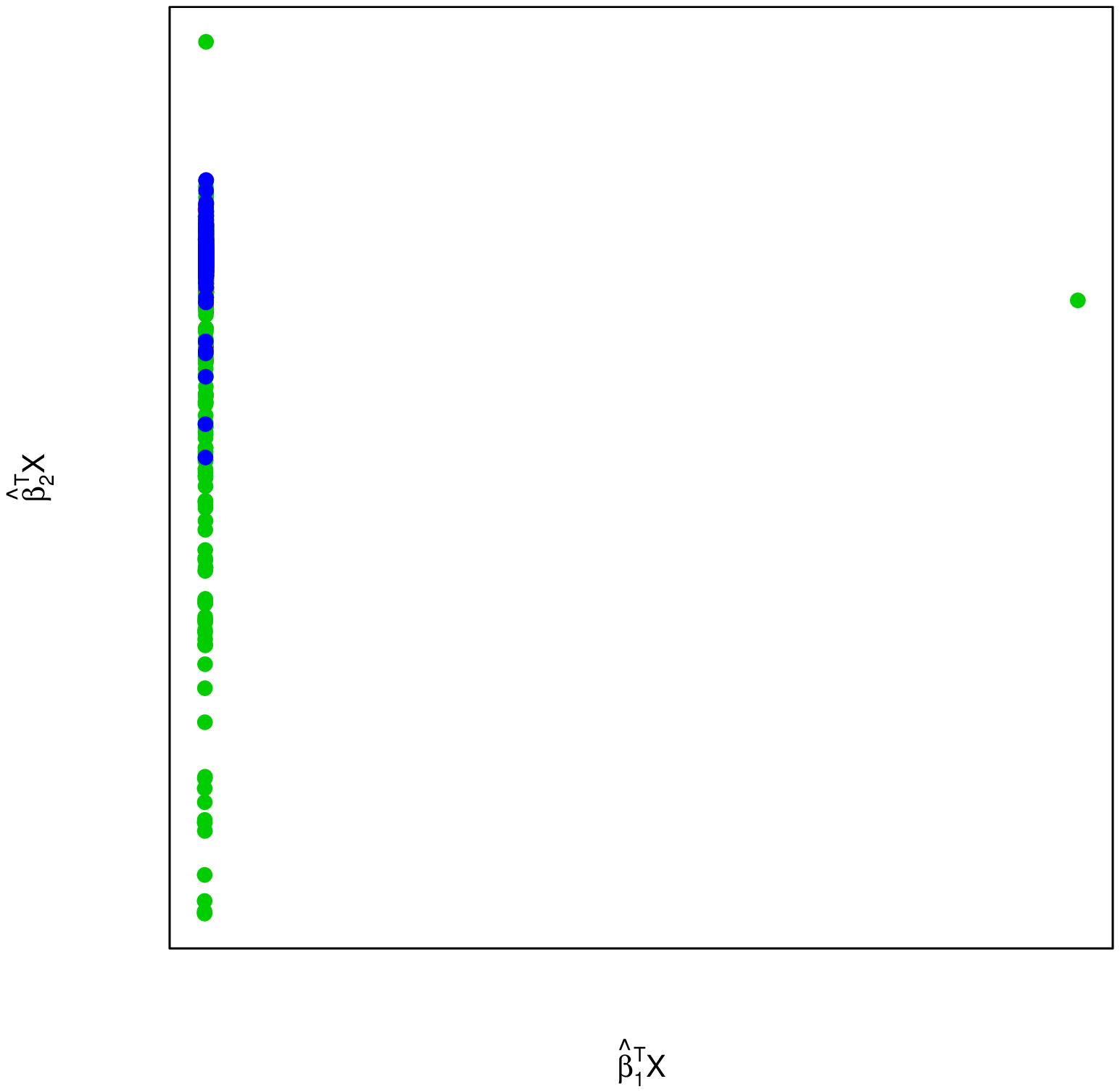}
\end{minipage}
}
\centering
\subfigure{
\begin{minipage}[c]{0.3\textwidth}
\centering
\includegraphics[height=2in,width=1.8in]{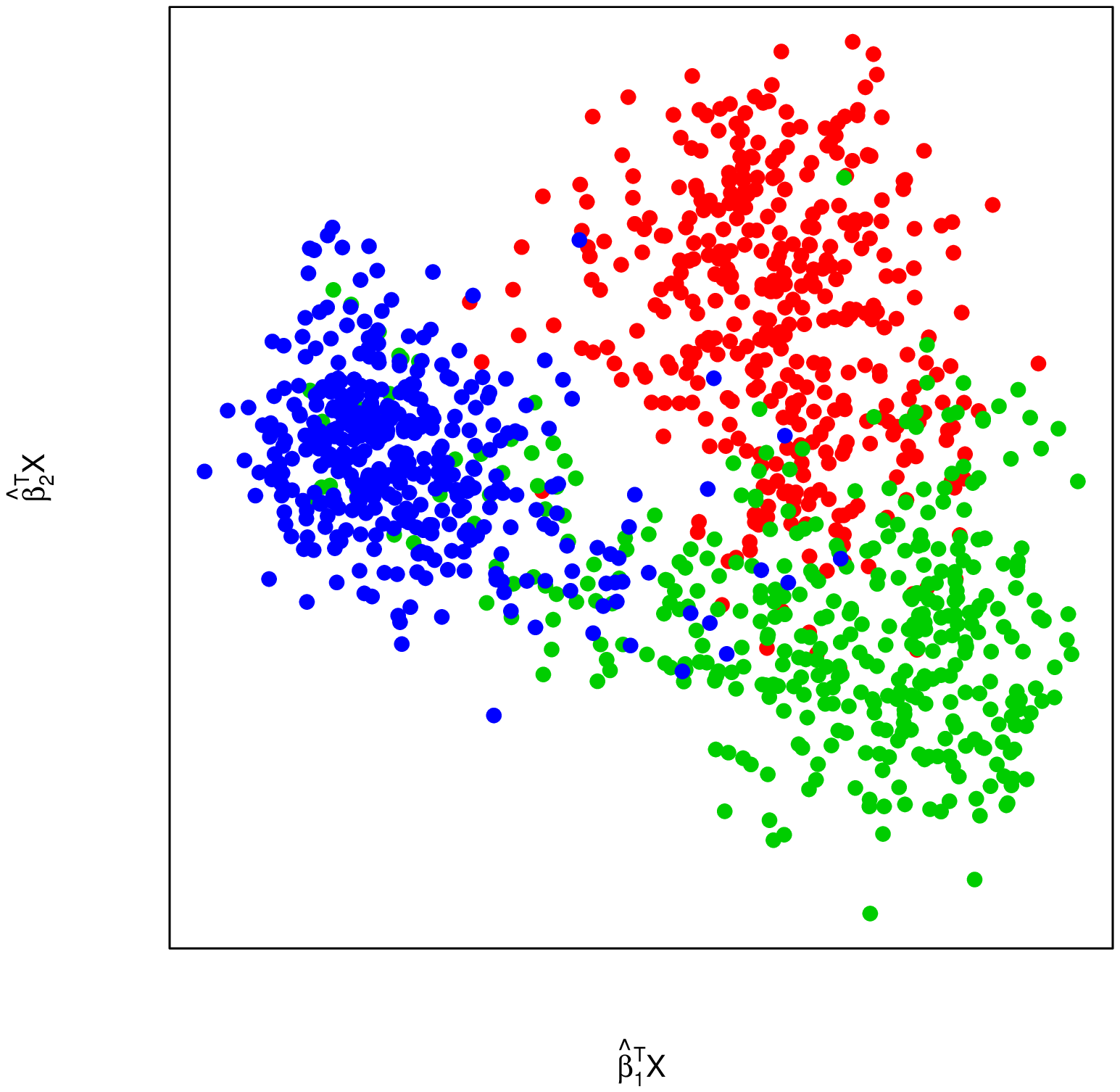}
\end{minipage}
\begin{minipage}[c]{0.3\textwidth}
\centering
\includegraphics[height=2in,width=1.8in]{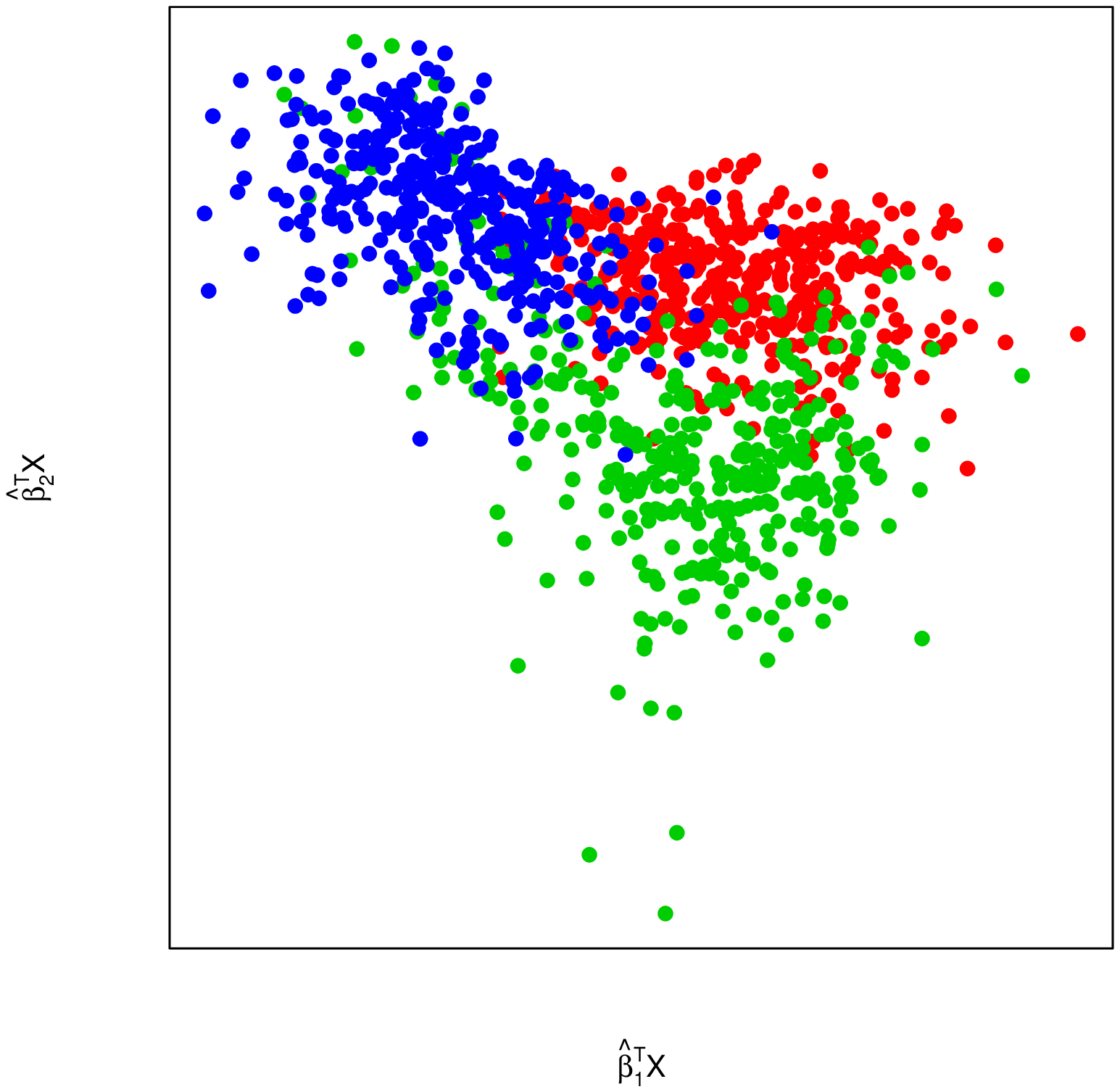}
\end{minipage}
\begin{minipage}[c]{0.3\textwidth}
\centering
\includegraphics[height=2in,width=1.8in]{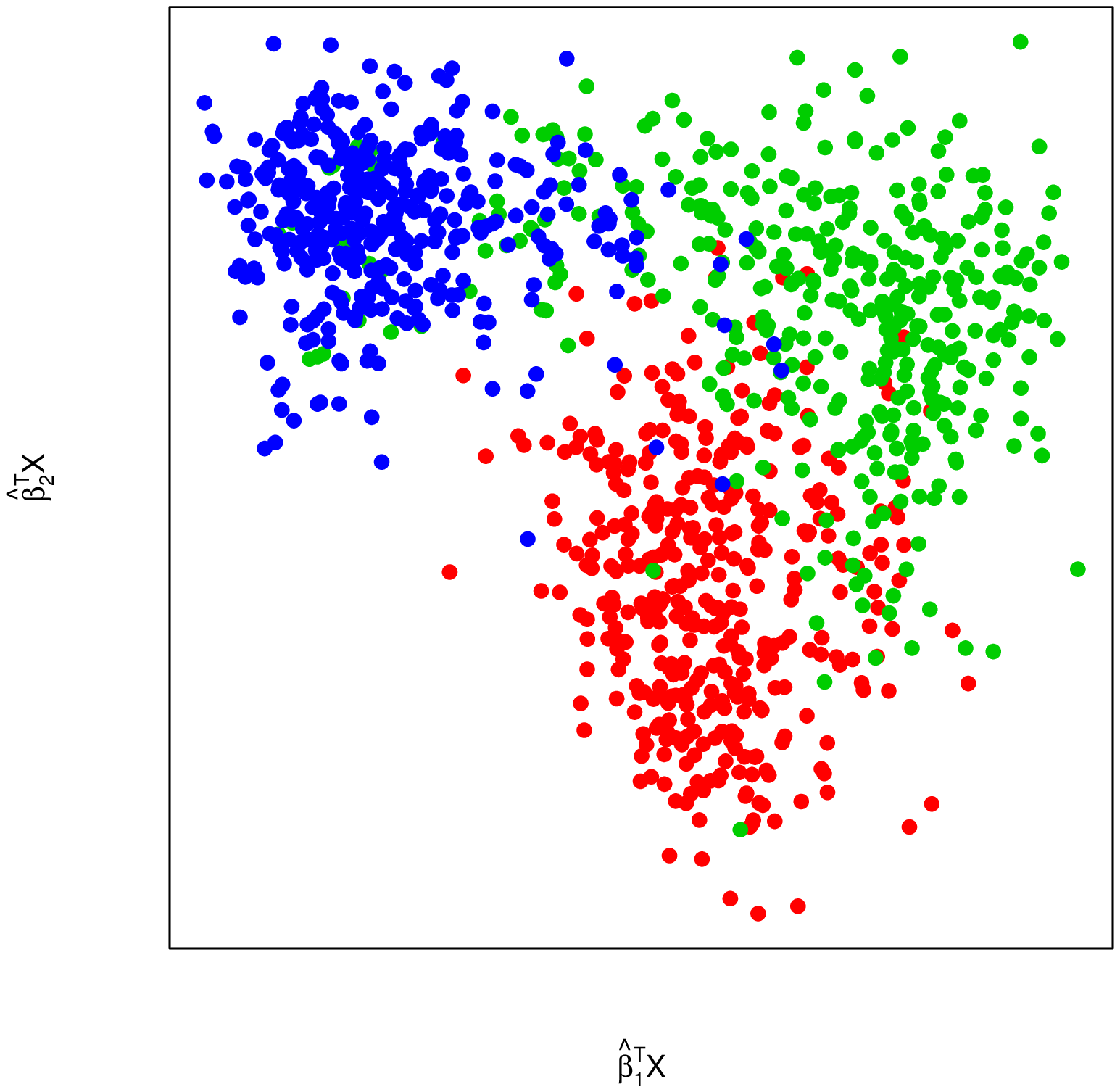}
\end{minipage}
}
\centering
\subfigure{
\begin{minipage}[c]{0.3\textwidth}
\centering
\includegraphics[height=2in,width=1.8in]{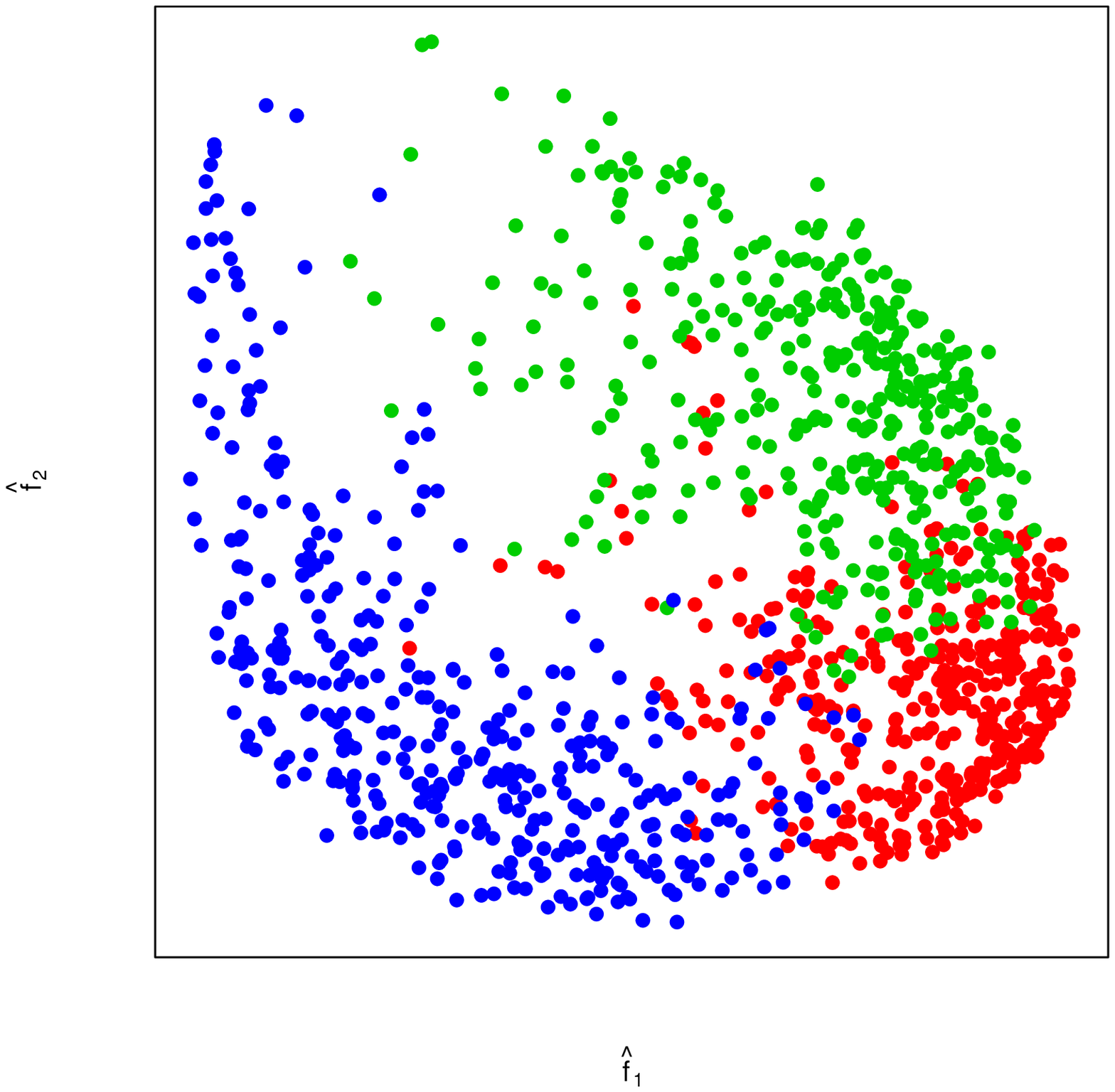}
\end{minipage}
\begin{minipage}[c]{0.3\textwidth}
\centering
\includegraphics[height=2in,width=1.8in]{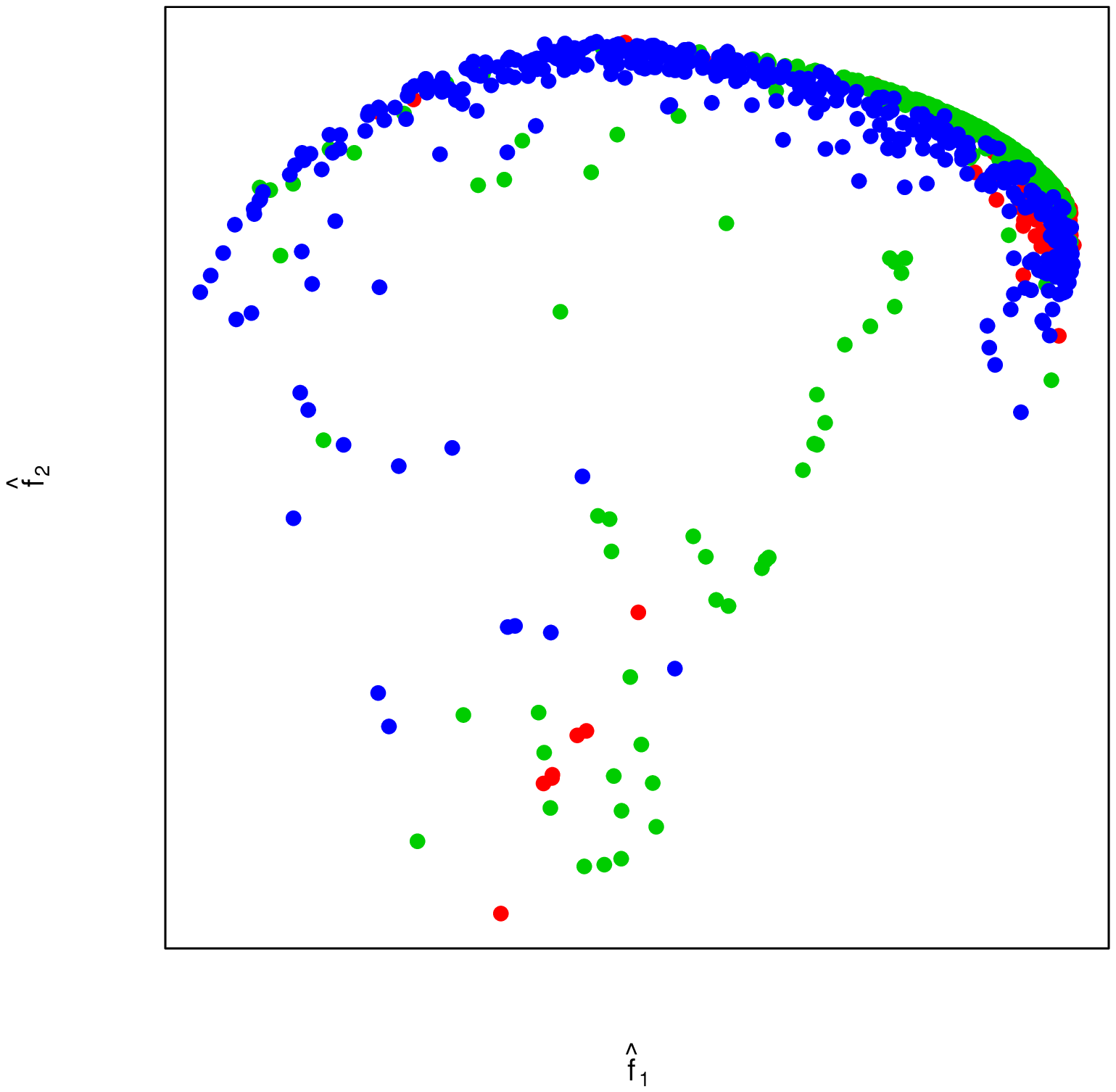}
\end{minipage}
\begin{minipage}[c]{0.3\textwidth}
\centering
\includegraphics[height=2in,width=1.8in]{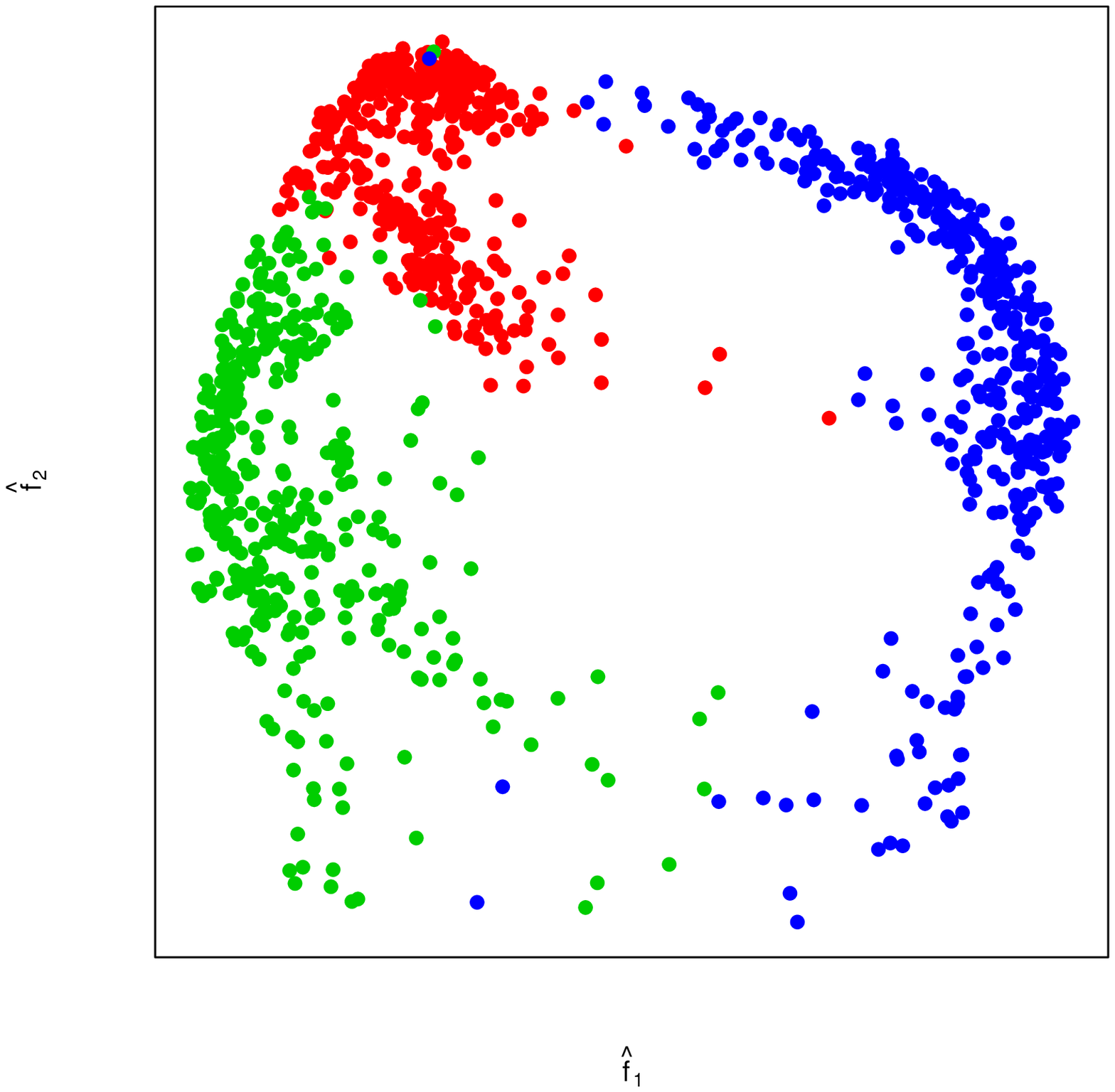}
\end{minipage}
}
\caption{The first row consists of the scatter plots for the training data via projective resampling based SIR, SAVE and DR, respectively. The second row consists of the scatter plots based on our linear proposal with Euclidean distance, LLE and Isomap, respectively. The third row consists of the scatter plots based on our nonlinear proposal with Euclidean distance, LLE and Isomap, respectively. (red: $1$; green: $4$;blue: $7$.)}
\end{figure}


\begin{figure}[ht]
\centering
\setlength{\abovecaptionskip}{0.cm}
\subfigure{
\begin{minipage}[c]{0.3\textwidth}
\centering
\includegraphics[height=2in,width=1.8in]{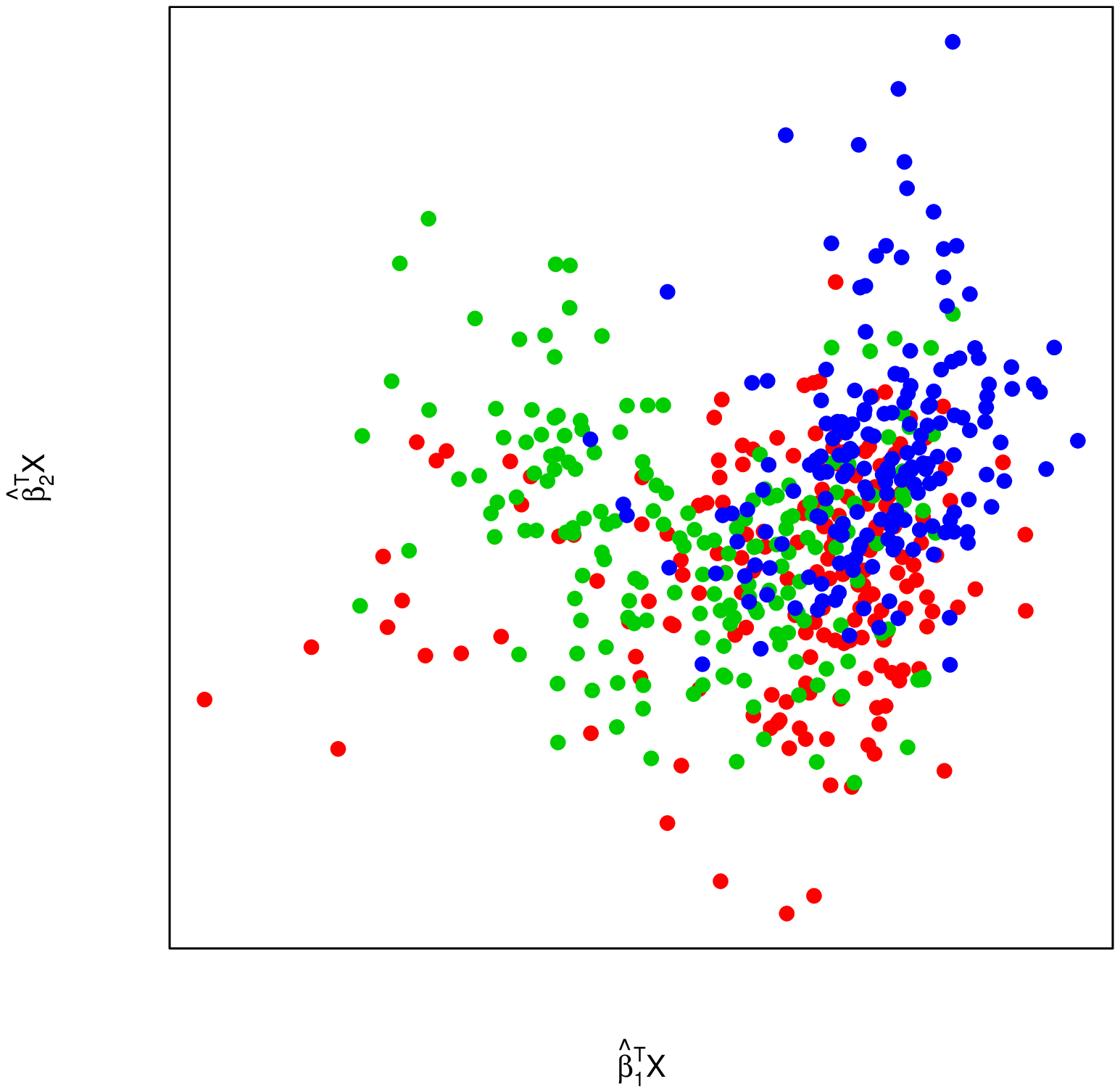}
\end{minipage}
\begin{minipage}[c]{0.3\textwidth}
\centering
\includegraphics[height=2in,width=1.8in]{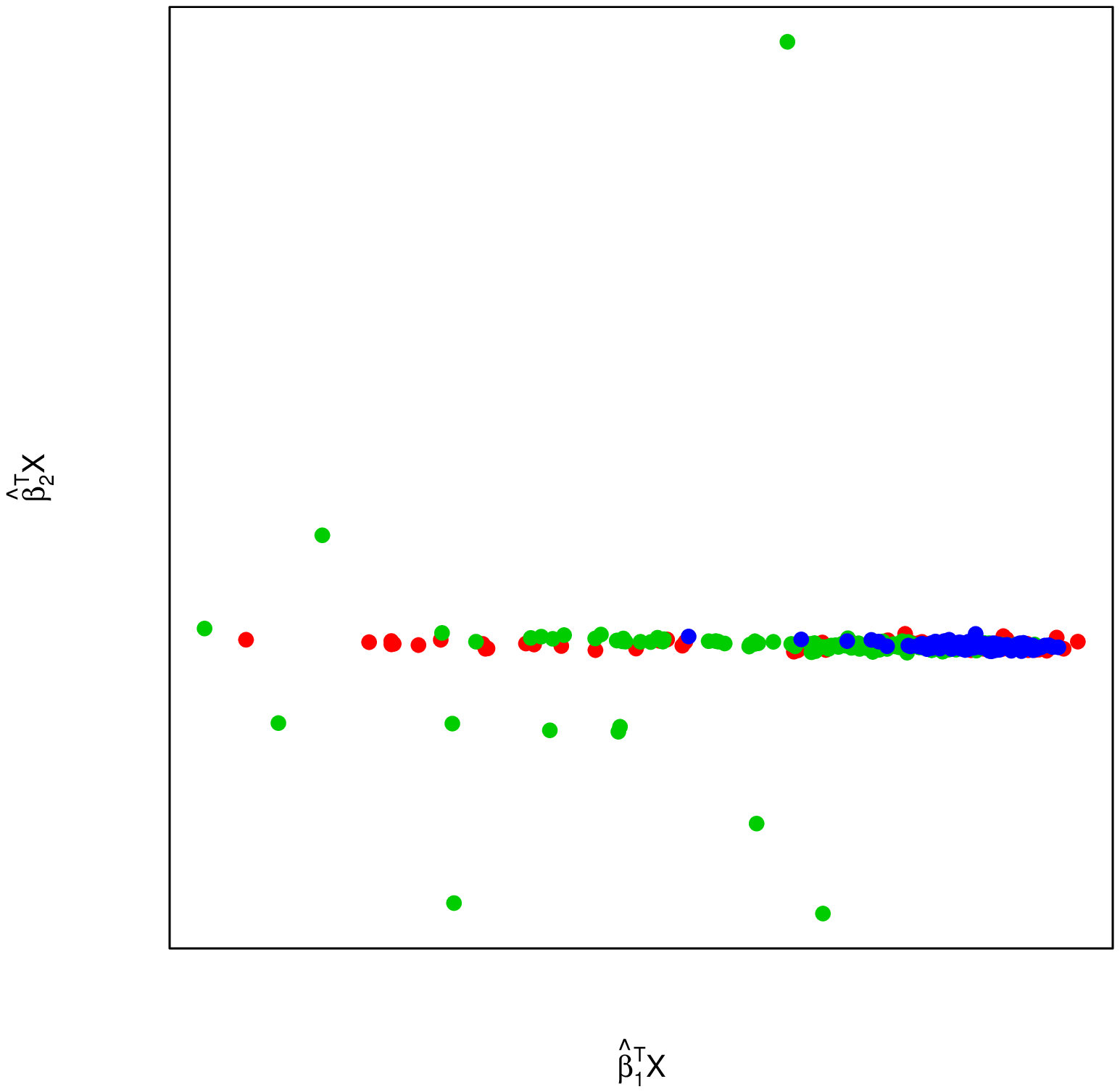}
\end{minipage}
\begin{minipage}[c]{0.3\textwidth}
\centering
\includegraphics[height=2in,width=1.8in]{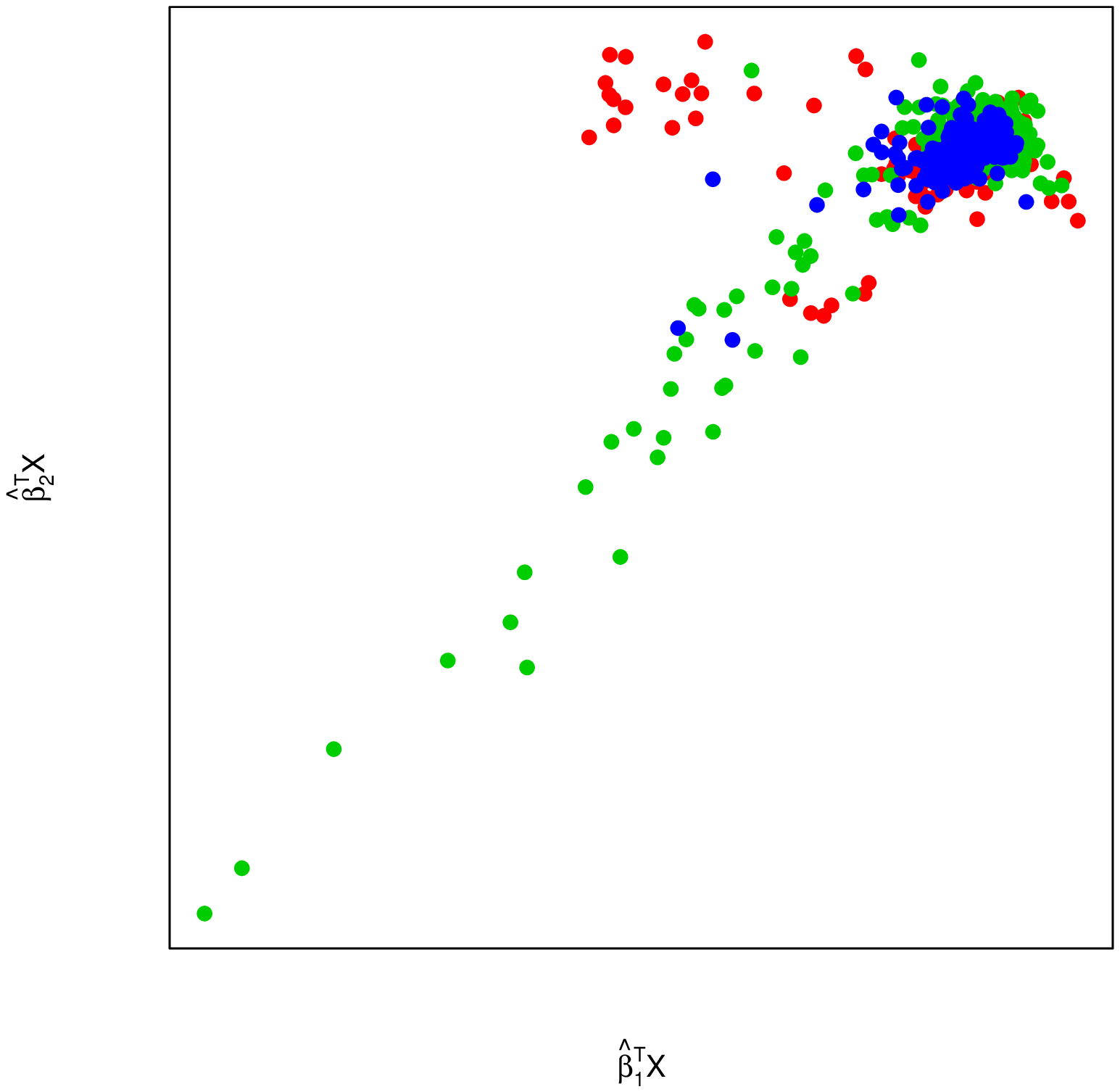}
\end{minipage}
}
\centering
\subfigure{
\begin{minipage}[c]{0.3\textwidth}
\centering
\includegraphics[height=2in,width=1.8in]{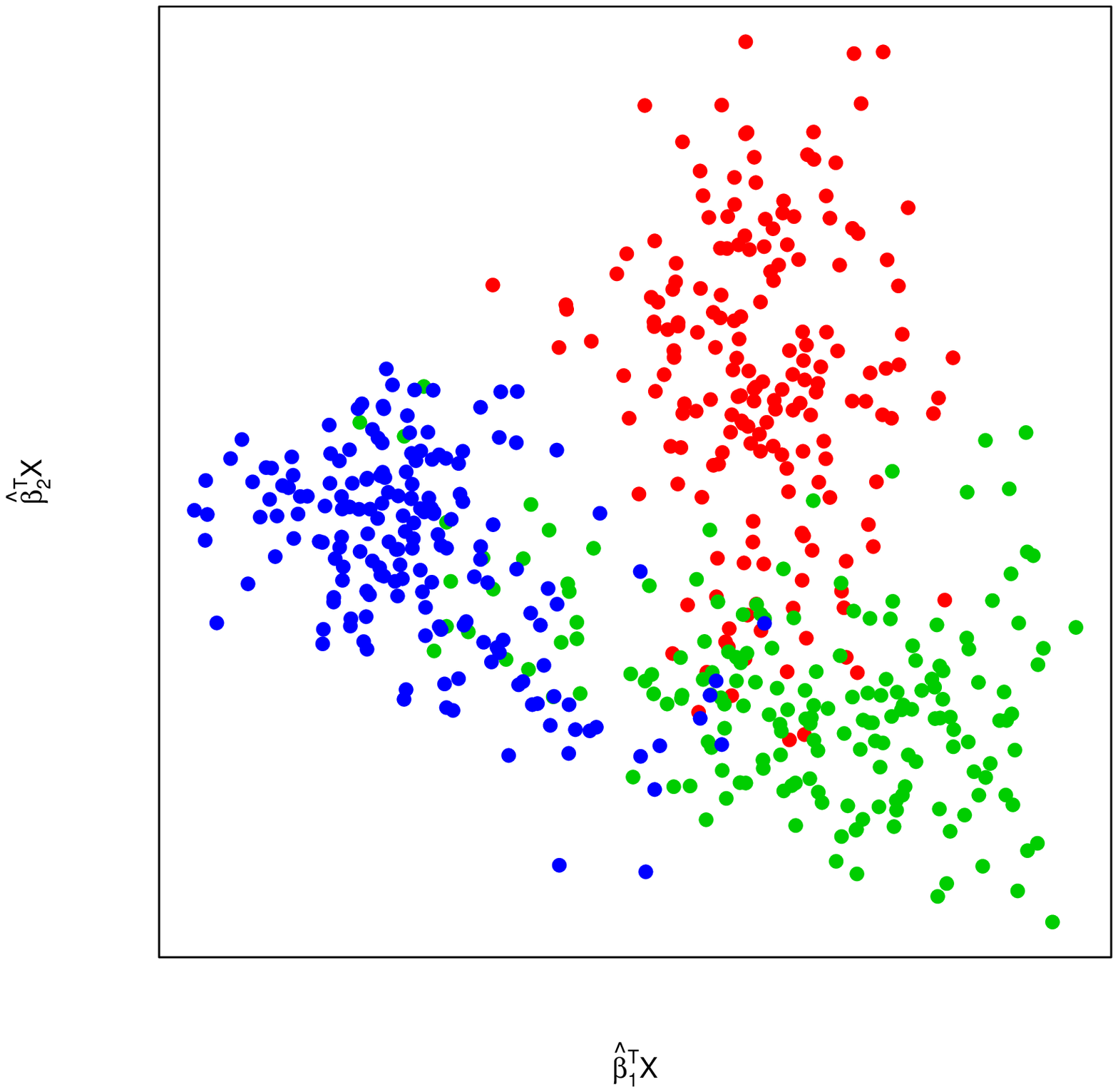}
\end{minipage}
\begin{minipage}[c]{0.3\textwidth}
\centering
\includegraphics[height=2in,width=1.8in]{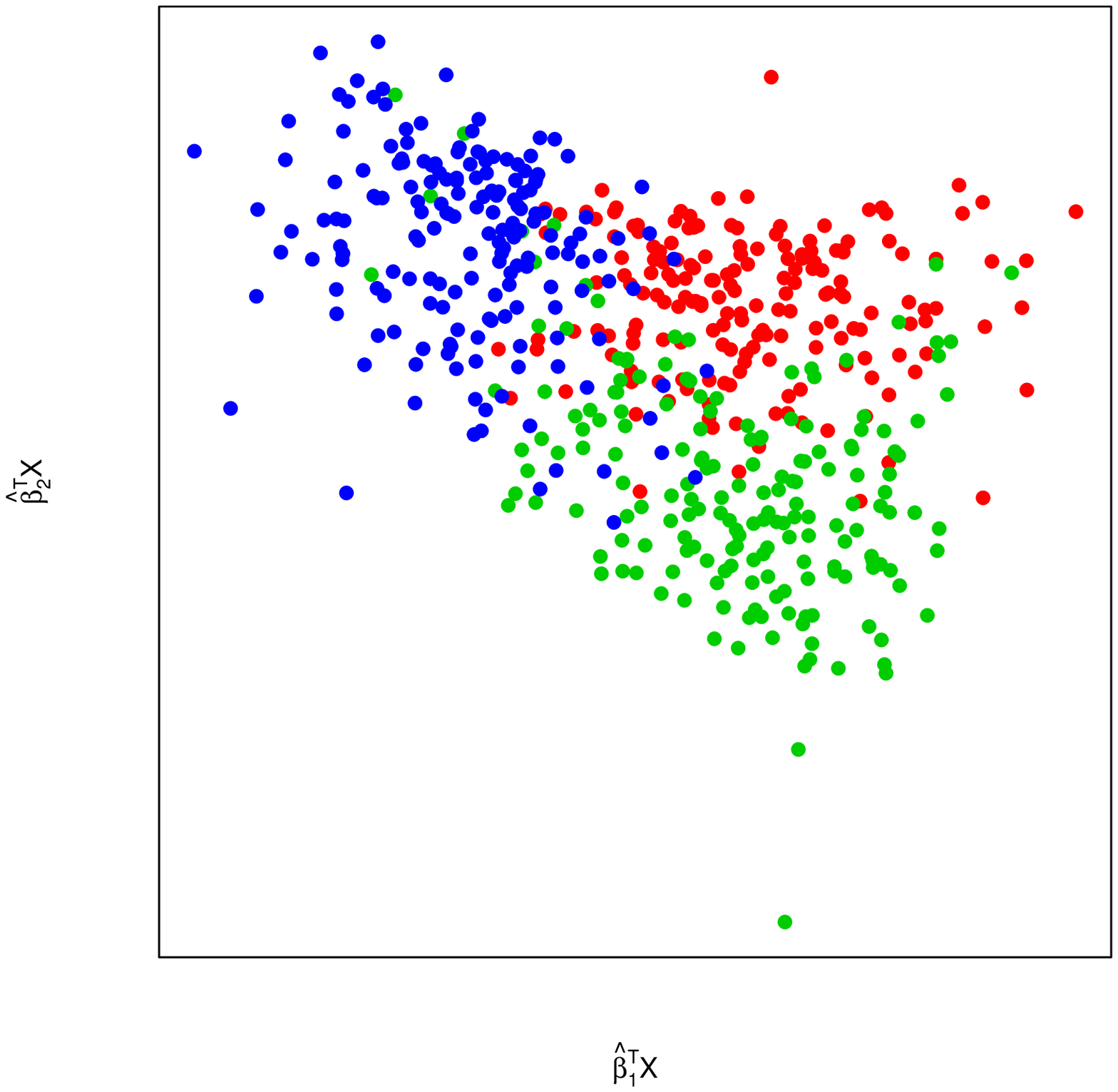}
\end{minipage}
\begin{minipage}[c]{0.3\textwidth}
\centering
\includegraphics[height=2in,width=1.8in]{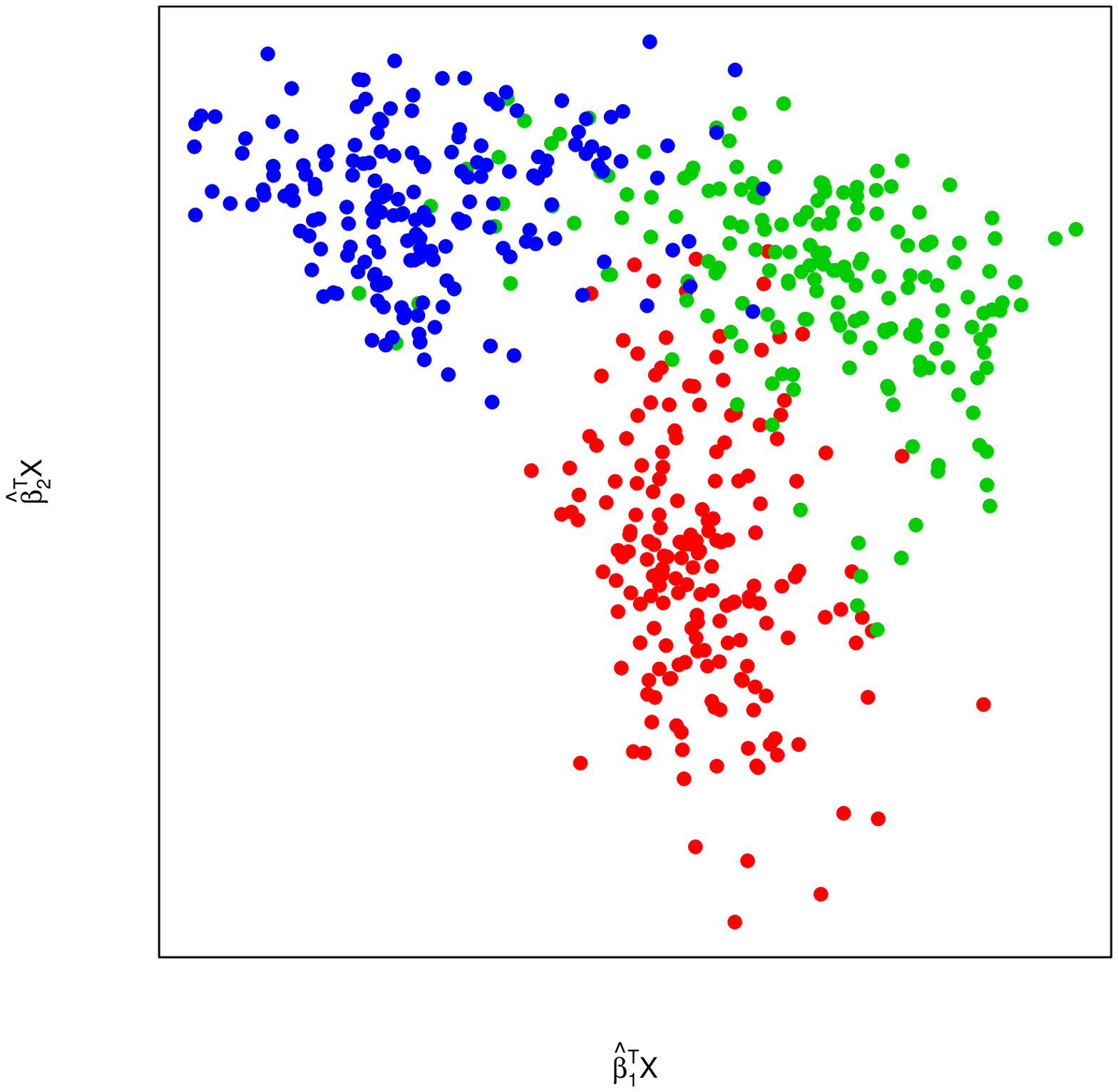}
\end{minipage}
}
\centering
\subfigure{
\begin{minipage}[c]{0.3\textwidth}
\centering
\includegraphics[height=2in,width=1.8in]{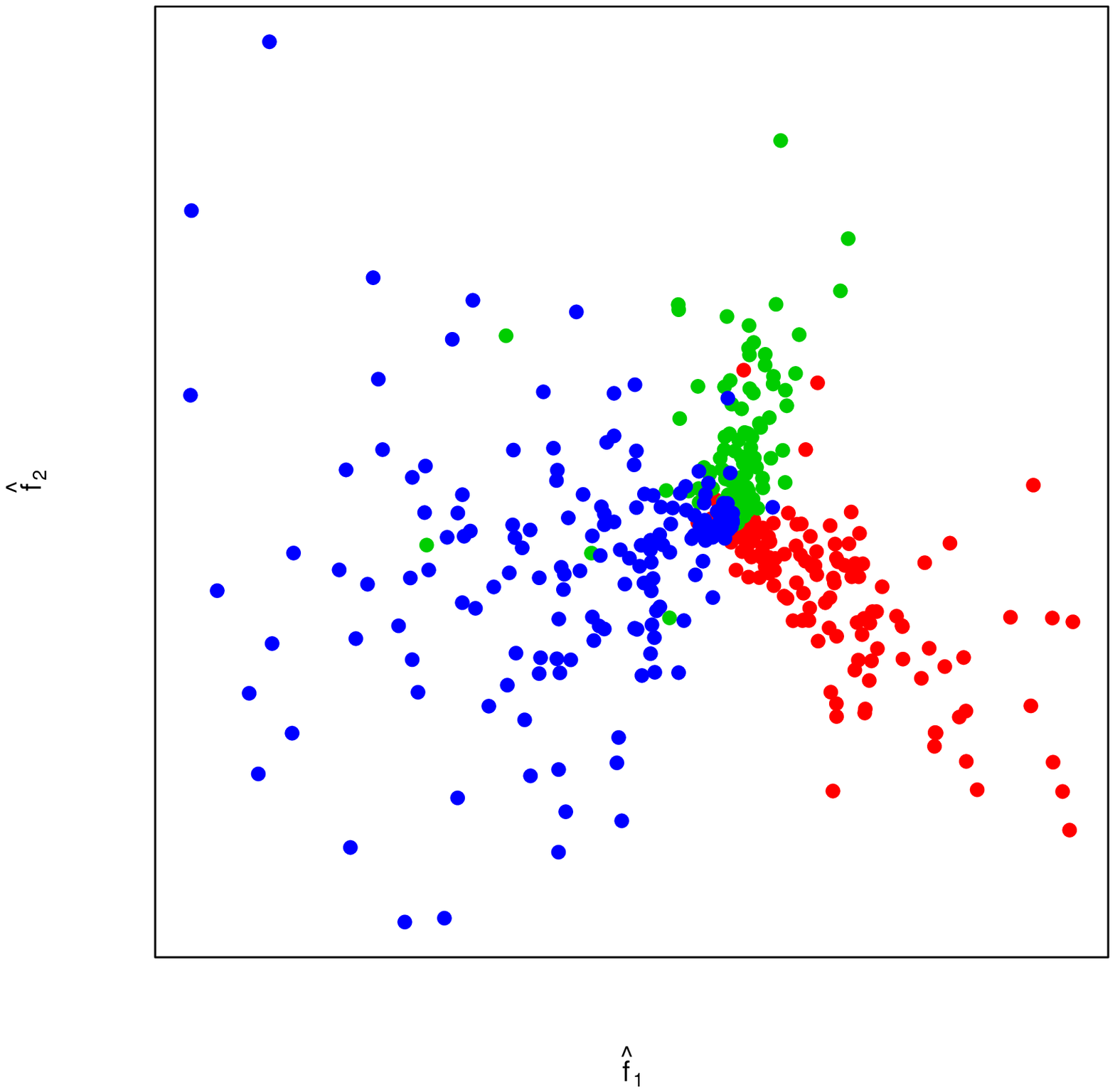}
\end{minipage}
\begin{minipage}[c]{0.3\textwidth}
\centering
\includegraphics[height=2in,width=1.8in]{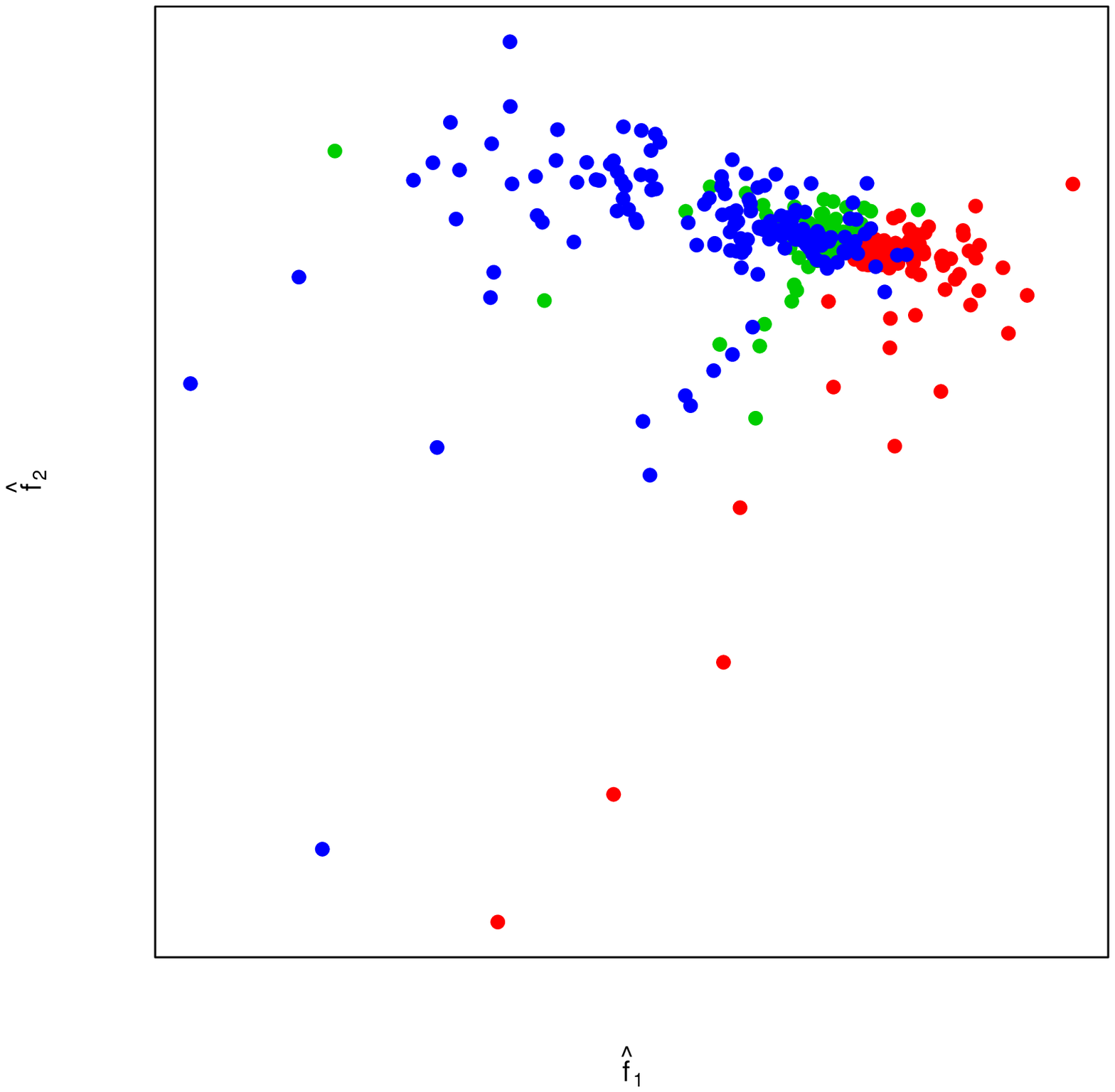}
\end{minipage}
\begin{minipage}[c]{0.3\textwidth}
\centering
\includegraphics[height=2in,width=1.8in]{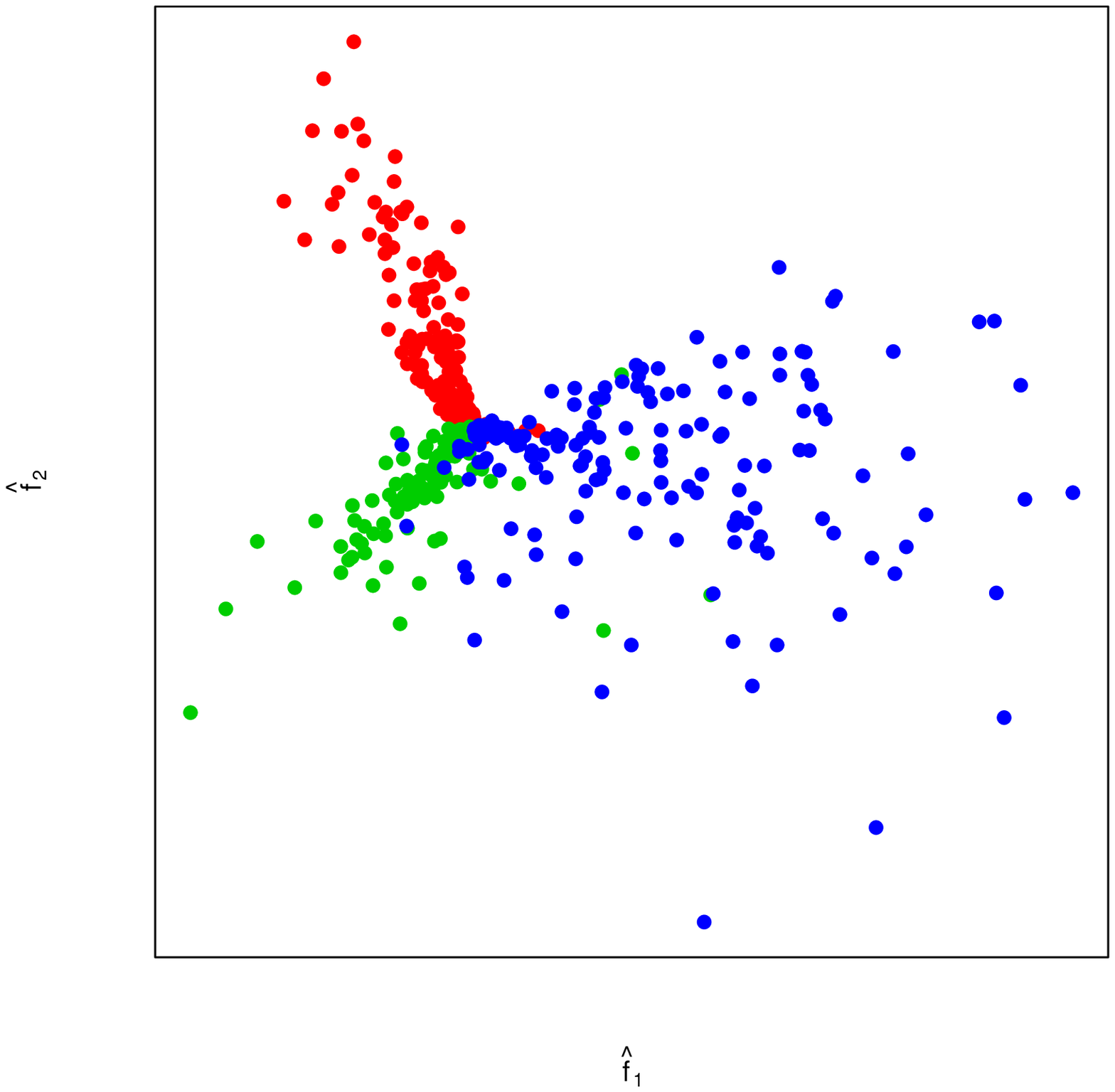}
\end{minipage}
}
\caption{The first row consists of the perspective plots for the first two sufficient predictors for the testing data by SIR, SAVE and DR, respectively. The second row consists of the perspective plots for the testing data based on our linear proposal with Euclidean distance, LLE and Isomap, respectively. The third row consists of the perspective plots for our nonlinear proposal with Euclidean distance, LLE and Isomap, respectively. (red: $1$; green: $4$;blue: $7$.)}
\end{figure}

For image digits $\{1,4,7\}$, we also include projective resampling approach in combination with three classical methods, sliced inverse regression, sliced average variance estimation and directional regression for comparisons. And we adopt three different distance metrics for our proposals: the Euclidean distance, the distance metric learned by the Local Linear Embedding (\cite{Roweis2000}), the distance metric learned by the Isomap approach
(\cite{Tenenbaum2000}). Similar to the conclusion drawn from the application to image digits $\{0,8,9\}$, we again find that our proposals provide valid and useful information for classification as seen from Figure 2 and 3, especially for the nonlinear approach in combination with Isomap.

\clearpage
\subsection{Structural Dimension Determination}
\begin{figure}[htbp]
    \setlength{\abovecaptionskip}{0.cm}
    \centering
    \caption{The vertical axis in the panel (a) and (b) represents a combination of the measures about eigenvalues and eigenvectors, $g_n(k)$, for digits groups: \{0,8,9\} and \{1,4,7\}, respectively.}
    \subfigure[The ladle plot]{
        \includegraphics[width=0.4\textwidth]{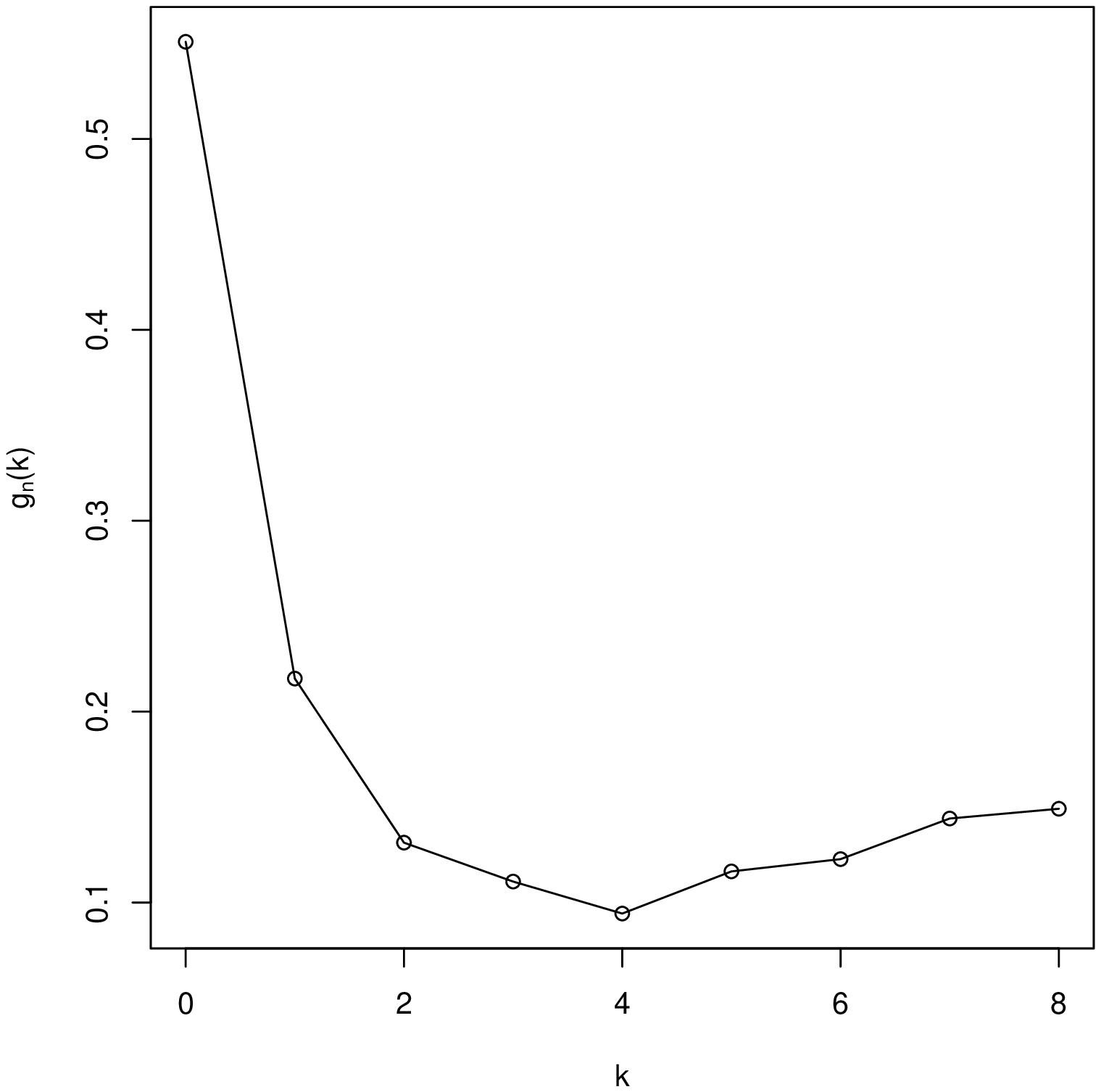}
    }
    \subfigure[The ladle plot]{
        \includegraphics[width=0.4\textwidth]{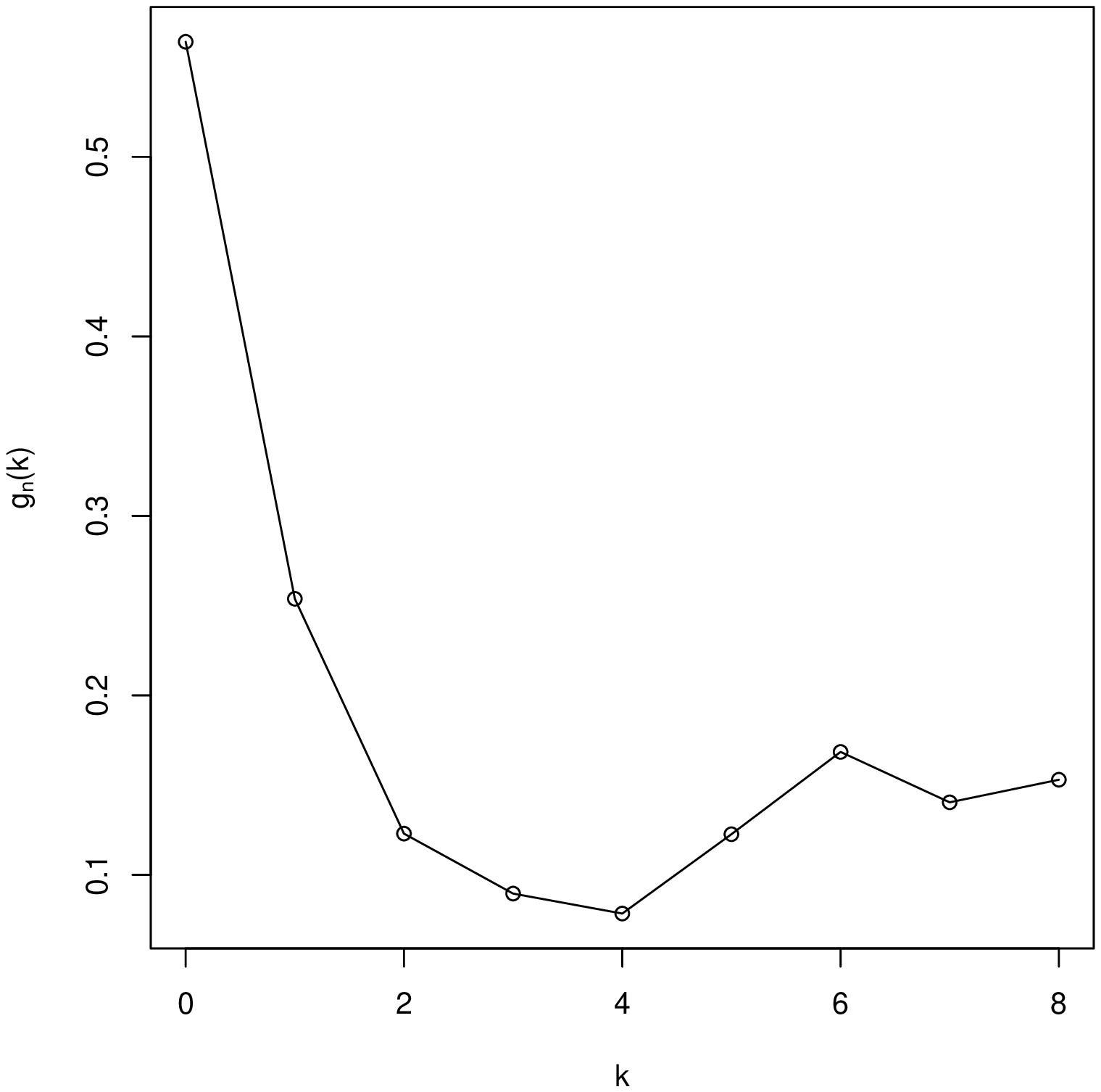}
    }
\end{figure}

For the handwritten digits data, we apply the ladle estimator with the distance metric learned by the Isomap method. Figure 4 displays the ladle plot for digits $\{0,8,9\}$ and $\{1,4,7\}$ respectively. We find that in both cases the ladle estimator yields $\hat{d} = 3$ or $4$. And from the scatter plots accumulated, we know that the first two sufficient predictors already provide useful information for image digits separation.

\clearpage

\section{Proofs of Theoretical Results}\label{SM}
\subsection{Proof of Proposition~1}
\begin{proposition}\label{property M1}  $\Lambda$ is positive semidefinite. Assume the linearity condition holds true, then
	$$\textup{Span}\left\{\Sigma^{-1}\Lambda \right\}\subseteq \mathcal{S}_{Y|X}.$$
\end{proposition}

\begin{proof}
 To prove the proposition, we introduction a fact that exists a separable $\mathbb{R}$-Hilbert space $\mathcal{H}$ and a mapping $\phi: \Omega\rightarrow \mathcal{H}$ such that $\forall Y,Y'\in \mathcal{H}$, $d(Y,Y')=\|\phi(Y)-\phi(Y')\|^2_{\mathcal{H}}$, as shown by \cite{Schoenberg1937,Schoenberg1938}.
 Let $\beta_{\phi}(\mu)=E\phi(Y)$. For $\forall a\in \mathbb{R}^p, a\neq (0,\ldots,0)^T$, we have
\begin{eqnarray*}
a^T\Lambda a
&=&-E\{\langle a^T(X-EX),a^T(X'-EX)\rangle d(Y,Y')\}\\
&=&-E\{\langle a^T(X-EX),a^T(X'-EX)\rangle \|\phi(Y)-\phi(Y')\|^2\}\\
&=&-E\{\langle a^T(X-EX),a^T(X'-EX)\rangle \langle \phi(Y)-\phi(Y'),\phi(Y)-\phi(Y')\rangle\}  \\
&=&-E\{\langle a^T(X-EX),a^T(X'-EX)\rangle \langle \phi(Y)-\beta_{\phi}(\mu)+\beta_{\phi}(\mu)-\phi(Y'),\\
&&\phi(Y)-\beta_{\phi}(\mu)+\beta_{\phi}(\mu)-\phi(Y')\rangle \} \\
&=&2E\{\langle a^T(X-EX),a^T(X'-EX)\rangle \langle \phi(Y)-\beta_{\phi}(\mu),\phi(Y')-\beta_{\phi}(\mu)\rangle\}  \\
&=&2\{E[a^T(X-EX)\otimes (\phi(Y)-\beta_{\phi}(\mu)]\}^2\geq 0
\end{eqnarray*}
Therefore, $\Lambda$ is a semidefined matrix.
By double expectation, we have
\begin{eqnarray*}
&&-\Sigma^{-1}E((X-EX)(X'-EX)^T d(Y,Y')) \\
&=&-\Sigma^{-1}E(E(X-E(X|Y))E(X'-E(X|Y'))^T d(Y,Y')) ,
\end{eqnarray*}
By the  property of SIR, we have $\Sigma^{-1}E(X-E(X|Y)) \in \mathcal{S}_{Y|X}$. The proof is completed.
\end{proof}

\subsection{Proof of Theorem~1}

{\theorem Assume the linearity condition and the singular values $\lambda_\ell$'s are distinct for $\ell=1,\ldots,d$. In addition, assume that $Ed^2(Y,Y') < \infty$ and $X$ has finite fourth moment, then
	\begin{eqnarray}
	n^{1/2}(\hat\beta_\ell-\beta_\ell)\overset{D}{\longrightarrow} N\left({0}_p, \Sigma_\ell\right),
	\end{eqnarray}
as $n\rightarrow \infty$, where $\Sigma_\ell= cov\{\Upsilon_\ell(X,Y)\} $.}

The following lemmas are needed before we prove Theorem 1. Let $E(X)=\mu,\overline{X}=n^{-1}\sum_{i=1}^{n}X_{i}$ and $\widehat{\Sigma}=n^{-1}\sum_{i=1}^{n}(X_{i}-\overline{X})(X_{i}-\overline{X})^T$. Lemma \ref{lemma M1} provides the asymptotic expansion of $\widehat{\Sigma}$. Its proof is obvious and thus omitted.

{\lemma\label{lemma M1} Assume $X$ has finite fourth moment. Then
\begin{align}\label{M 29}
\widehat{\Sigma}-\Sigma=\frac{1}{n}\sum\limits_{i=1}^{n}\Gamma(X_{i})+o_P(n^{-1/2}),
\end{align}
where $\Gamma(X_{i})=(X_{i}-\mu)(X_{i}-\mu)^T-\Sigma$.
}
\vspace{2em}

Let $\Lambda = -E\{(X-\mu)(X'-\mu)^T d(Y,Y')\}$ and
\begin{align}\label{M 30}
 \widehat{\Lambda} =-\frac{1}{n(n-1)}\sum_{1\leq i\neq j\leq n} (X_i-\overline{X})(X_j-\overline{X})^T d(Y_i,Y_j).
 \end{align}
Lemma \ref{lemma M2} provides the asymptotic expansion of $\widehat{\Lambda}$.
{\lemma\label{lemma M2} Assume $X$ has finite fourth moment. Then
\begin{align}\label{M 31}
\widehat{\Lambda}-\Lambda=\frac{1}{n}\sum\limits_{i=1}^{n}\Theta(X_{i},Y_{i})+o_P(n^{-1/2}),
\end{align}
where the exact form of $\Theta(X_{i},Y_{i})$ is provided in the proof.
}

\vspace{1em}

\begin{proof}
First we decompose $\widehat{\Lambda}$ in \eqref{M 30} as $\widehat{\Lambda}=\widehat{U}^{(1)}+\widehat{U}^{(2)}+\widehat{U}^{(3)}+\widehat{U}^{(4)}$, where
\begin{equation}\label{M 32}
\begin{aligned}
\widehat{U}^{(1)}&=-\frac{1}{n(n-1)}\sum_{i \neq j} (X_i-\mu)(X_j-\mu)^T d(Y_i,Y_j),\\
\widehat{U}^{(2)}&=\frac{1}{n(n-1)}\sum_{i\neq j} (\widehat{\mu}-\mu)(X_j-\mu)^T d(Y_i,Y_j), \\
\widehat{U}^{(3)}&=\frac{1}{n(n-1)}\sum_{i \neq j} (X_i-\mu)(\widehat{\mu}-\mu)^T d(Y_i,Y_j),\text{ and}\\
\widehat{U}^{(4)}&=-\frac{1}{n(n-1)}\sum_{i\neq j} (\widehat{\mu}-\mu)(\widehat{\mu}-\mu)^T d(Y_i,Y_j).
\end{aligned}
\end{equation}

\setlength{\parindent}{0pt}Let $\Lambda^{(1)}(X_{i},Y_{i},X_{j},Y_{j})=-(X_i-\mu)(X_j-\mu)^T d(Y_i,Y_j)$ and denote $\Lambda^{(1)}_{1}(x,y)=E\{\Lambda^{(1)}(X,Y,x,y)\}$. For the first term, we have
\begin{equation}\label{M 33}
\begin{aligned}
\widehat{U}^{(1)}-\Lambda&=\frac{1}{n(n-1)}\sum_{i\neq j}\{\Lambda^{(1)}(X_i,Y_i,X_j,Y_j)-\Lambda\}\\
&=\frac{1}{n}\sum\limits_{i=1}^{n}\{\Lambda^{(1)}_{1}(X_{i},Y_{i})-\Lambda\}+\frac{1}{n(n-1)}\sum_{i\neq j}A(X_{i},Y_{i})
\end{aligned}
\end{equation}
where
$$
A(X_{i},Y_{i},X_{j},Y_{j})=\{\Lambda^{(1)}(X_i,Y_i,X_j,Y_j)-\Lambda^{(1)}_{1}(X_{i},Y_{i})\}.
$$
By simple calculation,
\begin{eqnarray*}
&&E\|\frac{1}{n(n-1)}\sum_{i\neq j}A(X_{i},Y_{i})\|^2_{F}\\
&=&tr(E(\frac{1}{n(n-1)}\sum_{i\neq j}A(X_{i},Y_{i},X_{j},Y_{j}))(\frac{1}{n(n-1)}\sum_{k\neq t}A(X_{k},Y_{k},X_{t},Y_{t})))\\
&=&\frac{n(n-1)(n-2)(n-3)}{n^2(n-1)^2}tr(E(A(X,Y,X',Y')A(X'',Y'',X''',Y''')))\\
&&+\frac{n(n-1)(n-2)}{n^2(n-1)^2}tr(E(A(X,Y,X',Y')A(X,Y,X'',Y'')))\\
&&+\frac{n(n-1)}{n^2(n-1)^2}tr(E(A(X,Y,X',Y')A(X,Y,X',Y')))\\
&=&\frac{n(n-1)(n-2)}{n^2(n-1)^2}tr(E(E(A(X,Y,X',Y')|X,Y)E(A(X,Y,X'',Y'')|X,Y)))\\
&&+\frac{1}{n(n-1)}tr(E(A(X,Y,X',Y')A(X,Y,X',Y')))\\
&=&\frac{1}{n(n-1)}tr(E(A(X,Y,X',Y')A(X,Y,X',Y')))=O(n^{-2})
\end{eqnarray*}
we get $\frac{1}{n(n-1)}\sum_{i\neq j}A(X_{i},Y_{i})=O_p(n^{-1})$.

Let $\vartheta=E\{(X-\mu)d(Y,Y')\}$. Note that
\begin{align*}
  \frac{1}{n(n-1)}\sum\limits_{i\neq j}(X_{j}-\mu)d(Y_i,Y_j)\overset{P}{\longrightarrow}\vartheta.
\end{align*}
It follows that
\begin{align}\label{M 34}
\widehat{U}^{(2)}=\frac{1}{n}\sum_{i=1}^n (X_{i}-\mu)\vartheta^T+o_P(n^{-1/2}).
\end{align}
Similarly we have
\begin{align}\label{M 35}
\widehat{U}^{(3)}=\frac{1}{n}\sum_{i=1}^n \vartheta(X_{i}-\mu)^T+o_P(n^{-1/2}).
\end{align}
Note that $\widehat{U}^{(4)}=o_P(n^{-1/2})$. (\ref{M 33}), (\ref{M 34}) and (\ref{M 35}) together lead to \eqref{M 31}, where $\Theta(X_{i},Y_{i})=\Lambda^{(1)}_{1}(X_{i},Y_{i})-\Lambda+(X_{i}-\mu)\vartheta^T +\vartheta(X_{i}-\mu)^T$.

By algebra calculations, we have
\begin{eqnarray*}
\widehat{M}-M&=&\widehat{\Sigma}^{-1}\widehat{\Lambda}-\Sigma^{-1}\Lambda=(\widehat{\Sigma}^{-1}-\Sigma^{-1})\Lambda+\Sigma^{-1}(\widehat{\Lambda}-\Lambda)+O_p(n^{-1})\\
&=&-\Sigma^{-1}(\widehat{\Sigma}-\Sigma)\Sigma^{-1}\Lambda+\Sigma^{-1}(\widehat{\Lambda}-\Lambda)+o_p(n^{-1/2})\\
&=&-\frac{1}{n}\sum_{i=1}^n\Sigma^{-1}((X_i-\mu)(X_i-\mu)^T-\Sigma)\Sigma^{-1}\Lambda\\
&&+\frac{1}{n(n-1)}\sum_{i\neq j}\Sigma^{-1}((X_i-\mu)(X_j-\mu)^T d(Y_i,Y_j)-\Lambda)\\
&&+\Sigma^{-1}(\mu-\bar{X})\vartheta^T+\Sigma^{-1}\vartheta(\mu-\bar{X})^T+o_p(n^{-1/2})\\
&=&-\frac{1}{n}\sum_{i=1}^n\Sigma^{-1}((X_i-\mu)(X_i-\mu)^T-\Sigma)\Sigma^{-1}\Lambda\\
&&+\frac{1}{n}\sum_{i=1}^n\Sigma^{-1}(\Lambda^{(1)}_1(X_i,Y_i)-\Lambda)\\
&&+\Sigma^{-1}(\mu-\bar{X})\vartheta^T+\Sigma^{-1}\vartheta(\mu-\bar{X})^T+o_p(n^{-1/2})\\
&=&\frac{1}{n}\sum_{i=1}^nH(X_i,Y_i)+o_p(n^{-1/2})
\end{eqnarray*}
where
\begin{eqnarray}
H(X,Y)&=&-\Sigma^{-1}((X-\mu)(X-\mu)^T\\ \nonumber
&&-\Sigma)\Sigma^{-1}\Lambda+\Sigma^{-1}(\Lambda^{(1)}_1(X,Y)-\Lambda)\\ \nonumber
&&+\Sigma^{-1}(\mu-X)\vartheta^T+\Sigma^{-1}\vartheta(\mu-X)^T \\ \nonumber
\end{eqnarray}

Simple calculations lead to
\begin{eqnarray*}
\widehat{M}\widehat{M}^T-MM^T&=&\Sigma^{-1}(\widehat{\Lambda}-\Lambda)\Lambda^T\Sigma^{-1}
+(\widehat{\Sigma}^{-1}-\Sigma^{-1})\Lambda\Lambda^T\Sigma^{-1}\\ \nonumber
&&+\Sigma^{-1}\Lambda\Lambda^T(\widehat{\Sigma}^{-1}-\Sigma^{-1})
+\Sigma^{-1}\Lambda(\widehat{\Lambda}-\Lambda)^T\Sigma^{-1}
\end{eqnarray*}

Observe that $\lambda_\ell$ and $\beta_\ell$ satisfy the following singular value decomposition equation:
\begin{eqnarray*}
MM^T\beta_\ell=\lambda_\ell^2\beta_\ell, \text{ and}\quad \ell=1,\ldots,p,
\end{eqnarray*}
Hence,
\begin{eqnarray*}
\Sigma^{-1}\Lambda\Lambda^T\Sigma^{-1}\beta_\ell=\lambda_\ell^2\beta_\ell, \text{ and}\quad \ell=1,\ldots,p,
\end{eqnarray*}
where $\beta_\ell^T\beta_\ell=1$ and $\beta_\ell^T\beta_\jmath=0$ for $\ell\neq \jmath $. Similarly, in the sample level,
we have
\begin{eqnarray*}
\widehat{\Sigma}^{-1}\widehat{\Lambda}\widehat{\Lambda}^T\widehat{\Sigma}^{-1}\beta_\ell=\lambda_\ell^2\beta_\ell, \text{ and}\quad \ell=1,\ldots,p;
\end{eqnarray*}
where $\beta_\ell^T\beta_\ell=1$ and $\beta_\ell^T\beta_\jmath=0$ for $\ell\neq \jmath $. The singular value decomposition
form in the sample level implies that
\begin{eqnarray}\label{M 50}
&&\Sigma^{-1}(\widehat{\Lambda}-\Lambda)\Lambda^T\Sigma^{-1}\beta_\ell+(\widehat{\Sigma}^{-1}-\Sigma^{-1})\Lambda\Lambda^T\Sigma^{-1}\beta_\ell\\ \nonumber
&+&\Sigma^{-1}\Lambda\Lambda^T(\widehat{\Sigma}^{-1}-\Sigma^{-1})\beta_\ell
+\Sigma^{-1}\Lambda(\widehat{\Lambda}-\Lambda)^T\Sigma^{-1}\beta_\ell
+\Sigma^{-1}\Lambda\Lambda^T\Sigma^{-1}(\widehat{\beta}_\ell-\beta_\ell)\\ \nonumber
&=&\lambda_\ell(\widehat{\lambda}_\ell-\lambda_\ell)\beta_\ell
+(\widehat{\lambda}_\ell-\lambda_\ell)\lambda_\ell\beta_\ell+\lambda_{\ell}^2(\widehat{\beta}_\ell-\beta_\ell)+o_p(n^{-1/2})
\end{eqnarray}
for $\ell=1,\ldots,p$. Multiply both sides of \eqref{M 50} by $\beta^T_\ell$, we get from the left
\begin{eqnarray*}
&&\beta^T_\ell[\Sigma^{-1}(\widehat{\Lambda}-\Lambda) \Lambda^T\Sigma^{-1}+(\widehat{\Sigma}^{-1}-\Sigma^{-1})\Lambda\Lambda^T\Sigma^{-1}
+\Sigma^{-1}\Lambda\Lambda^T(\widehat{\Sigma}^{-1}-\Sigma^{-1})\\
&+&\Sigma^{-1}\Lambda(\widehat{\Lambda}-\Lambda)^T\Sigma^{-1}]\beta_\ell
=\lambda_\ell(\widehat{\lambda}_\ell-\lambda_\ell)+(\widehat{\lambda}_\ell-\lambda_\ell)\lambda_k+o_p(n^{-1/2})
\end{eqnarray*}
which further suggests that
\begin{eqnarray}\label{M 51}
\widehat{\lambda}_\ell&=&\lambda_\ell+\frac{\beta^T_\ell}{2\lambda_\ell}[\Sigma^{-1}(\widehat{\Lambda}-\Lambda)\Lambda^T\Sigma^{-1}
+(\widehat{\Sigma}^{-1}-\Sigma^{-1})\Lambda\Lambda^T\Sigma^{-1}\\ \nonumber
&+&\Sigma^{-1}\Lambda\Lambda^T(\widehat{\Sigma}^{-1}-\Sigma^{-1})
+\Sigma^{-1}\Lambda(\widehat{\Lambda}-\Lambda)^T\Sigma^{-1}]\beta_\ell+o_p(n^{-1/2})
\end{eqnarray}
By lemma A.2 of \cite{Cook2005}, we know that
\begin{eqnarray*}
\widehat{\Sigma}^{-1}-\Sigma^{-1}=-\Sigma^{-1}(\widehat{\Sigma}-\Sigma)\Sigma^{-1}+o_p(n^{-1/2})
\end{eqnarray*}
Hence, equation \eqref{M 51} becomes
\begin{eqnarray*}
\widehat{\lambda}_\ell&=&\lambda_\ell+\frac{\beta^T_\ell}{2\lambda_\ell}[\Sigma^{-1}(\widehat{\Lambda}-\Lambda)\Lambda^T\Sigma^{-1}
-\Sigma^{-1}(\widehat{\Sigma}-\Sigma)\Sigma^{-1}\Lambda\Lambda^T\Sigma^{-1}\\ \nonumber
&-&\Sigma^{-1}\Lambda\Lambda^T\Sigma^{-1}(\widehat{\Sigma}-\Sigma)\Sigma^{-1}
+\Sigma^{-1}\Lambda(\widehat{\Lambda}-\Lambda)^T\Sigma^{-1}]\beta_\ell+o_p(n^{-1/2})\\
&=&\lambda_\ell+\frac{1}{n}\sum_{i=1}^nC_{i,\lambda_\ell}+o_p(n^{-1/2})
\end{eqnarray*}
where
\begin{eqnarray*}
C_{i,\lambda_\ell}&=&\frac{\beta^T_\ell}{2\lambda_\ell}[\Sigma^{-1}\Theta(X_i,Y_i)\Lambda^T\Sigma^{-1}
-\Sigma^{-1}\Gamma(X_{i})\Sigma^{-1}\Lambda\Lambda^T\Sigma^{-1}\\ \nonumber
&-&\Sigma^{-1}\Lambda\Lambda^T\Sigma^{-1}\Gamma(X_{i})\Sigma^{-1}
+\Sigma^{-1}\Lambda\Theta(X_i,Y_i)\Sigma^{-1}]\beta_\ell+o_p(n^{-1/2})\\
\end{eqnarray*}
Now we return to the expansion of $\hat{\beta}_\ell$. Since $(\beta_1,\ldots,\beta_p)$ is a basis if $R^p$, there exists $c_{\ell j}$ for $j=1,\ldots,p$, such that $\widehat{\beta}_\ell-\beta_\ell=\sum_{j=1}^pc_{\ell j}\beta_j$ and $c_{\ell j}=O_p(n^{-1/2})$. We will derive the explicit form of $c_{\ell j}$ in the next step. Note that \eqref{M 50} can be rewritten as
\begin{eqnarray}\label{M 52}
&&(\Sigma^{-1}\Lambda\Lambda^T\Sigma^{-1}-\lambda_\ell^2)\sum_{j=1}^pc_{\ell j}\beta_j\\  \nonumber
&=&\lambda_\ell(\widehat{\lambda}_\ell-\lambda_\ell)\beta_\ell+(\widehat{\lambda}_\ell-\lambda_\ell)\lambda_\ell\beta_\ell
+[\Sigma^{-1}(\widehat{\Lambda}-\Lambda)\Lambda^T\Sigma^{-1}\\  \nonumber
&+&(\widehat{\Sigma}^{-1}-\Sigma^{-1})\Lambda\Lambda^T\Sigma^{-1}+\Sigma^{-1}\Lambda\Lambda^T(\widehat{\Sigma}^{-1}-\Sigma^{-1})
+\Sigma^{-1}\Lambda(\widehat{\Lambda}-\Lambda)^T\Sigma^{-1}]\beta_\ell\\  \nonumber
&=&\lambda_\ell(\widehat{\lambda}_\ell-\lambda_\ell)\beta_\ell+(\widehat{\lambda}_\ell-\lambda_\ell)\lambda_\ell\beta_\ell
+[\Sigma^{-1}(\widehat{\Lambda}-\Lambda)\Lambda^T\Sigma^{-1}\\ \nonumber
&-&\Sigma^{-1}(\widehat{\Sigma}-\Sigma)\Sigma^{-1}\Lambda\Lambda^T\Sigma^{-1}-\Sigma^{-1}\Lambda\Lambda^T\Sigma^{-1}(\widehat{\Sigma}-\Sigma)\Sigma^{-1}
+\Sigma^{-1}\Lambda(\widehat{\Lambda}-\Lambda)^T\Sigma^{-1}]\beta_\ell\\ \nonumber
&=&\lambda_\ell(\widehat{\lambda}_\ell-\lambda_\ell)\beta_\ell+(\widehat{\lambda}_\ell-\lambda_\ell)\lambda_\ell\beta_\ell
+\frac{1}{n}\sum_{1=1}^n\zeta_{\ell}(X_i,Y_i) \beta_\ell
\end{eqnarray}
where
\begin{eqnarray*}
\zeta_{\ell}(X_i,Y_i)&=&\Sigma^{-1}\Theta(X_i,Y_i)\Lambda\Sigma^{-1}
-\Sigma^{-1}\Gamma(X_{i})\Sigma^{-1}\Lambda\Lambda^T\Sigma^{-1}\\ \nonumber
&-&\Sigma^{-1}\Lambda\Lambda^T\Sigma^{-1}\Gamma(X_{i})\Sigma^{-1}
+\Sigma^{-1}\Lambda\Theta(X_i,Y_i)\Sigma^{-1}
\end{eqnarray*}
Multiply both sides of \eqref{M 52} by $\beta_j^T \ (j\neq \ell)$, we can get from the left
\begin{eqnarray}\label{M 53}
c_{\ell,j}=\frac{1}{n}\sum_{i=1}\frac{\beta_j^T\zeta_{\ell}(X_i,Y_i) \beta_\ell}{\lambda_j^2-\lambda_\ell^2}, \ j\neq \ell;
\end{eqnarray}
In addition, $\beta_\ell^T\beta_\ell=\widehat{\beta}_\ell^T\widehat{\beta}_\ell=1$ indicates that
$$
0={\sum_{j=1}^pc_{\ell j}\beta^T_j}\beta_\ell+\beta_\ell^T{\sum_{j=1}^pc_{\ell j}\beta_j},
$$
which further implies that $c_{\ell\ell}=0$. We define
\begin{equation}\label{sd}
\Sigma_\ell=\rm{cov}(\Upsilon_\ell(X,Y)),
\end{equation}
 where $p\times 1$ random vector
$
\Upsilon_\ell(X,Y)=\sum_{j=1,j\neq \ell}^{p}\frac{\beta_j\beta_j^T\zeta_{\ell}(X,Y)\beta_\ell}{\lambda_j^2-\lambda_\ell^2}.
$
Then plug \eqref{M 53} and \eqref{M 29} into \eqref{M 31}, and we get
\begin{eqnarray}
\widehat{\beta}_\ell=\beta_\ell+\frac{1}{n}\sum_{i=1}^n\Upsilon_\ell(X_i,Y_i)+o_p(n^{-1/2})
\end{eqnarray}
The conclusion is then straightforward via the central limit theorem.
\end{proof}

\subsection{Proof of Theorem~2}
\begin{theorem}\label{theo: ladle} Assume $Ed^2(Y,Y') < \infty$ and $X$ has finite fourth moment. And suppose Assumptions (1)--(2) hold, then
	\begin{align*}
	P_r\{\lim_{n\rightarrow\infty} P_r(\hat d =d |\mathcal{D}) {=1}\} =1,
	\end{align*}
	where $\mathcal{D}=\{(X_1,Y_1),(X_2,Y_2),\ldots\}$ is a sequence of independent copies of $(X,Y)$.
\end{theorem}

\begin{proof} The singular value of  $\widehat M$  are square root of the corresponding eigenvalue of matrix $\widehat M\widehat M^T$. Moreover,  the left singular vectors are the same as the eigenvectors of $\widehat M\widehat M^T$. Then we apply Theorem 2 in \cite{Luo2016} to get the desired result.
\end{proof}

\subsection{Proof of Proposition~2}
\begin{proposition} $\Lambda_{XX'}$ is a bounded linear and self-adjoint operator. For any $f,g \in \mathcal{H}_X$,
\begin{align*}
\langle f,\Lambda_{XX'}g\rangle_{\mathcal{H}_X}=-E\{(f(X)-Ef(X)) (g(X')-E{g(X')})d(Y,Y')\}.
\end{align*}
Moreover, there exists a separable $\mathbb{R}$-Hilbert space $\mathcal{H}$ and a mapping $\phi: \Omega\rightarrow \mathcal{H}$ such that
\begin{align*}
\langle f,\Lambda_{XX'}f\rangle_{\mathcal{H}_X}=2\{E[(f(X)-Ef(X)) (\phi(Y)-E\phi(Y))]\}^2=2(cov[f(X),\phi(Y)])^2,
\end{align*}
\end{proposition}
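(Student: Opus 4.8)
The plan is to reduce every assertion to the reproducing property together with the negative-type embedding of the metric already exploited in the proof of Proposition~1. Throughout I would write $\tilde\kappa_X=\kappa_X(\cdot,X)-\mu_X(\cdot)$ and $\tilde\kappa_{X'}=\kappa_X(\cdot,X')-\mu_{X'}(\cdot)$ for the centered feature maps, so that the defining expression reads $\Lambda_{XX'}=-E\{\tilde\kappa_X\otimes\tilde\kappa_{X'}\,d(Y,Y')\}$, where $u\otimes v$ denotes the rank-one operator $g\mapsto\langle v,g\rangle_{\mathcal{H}_X}u$. First I would verify that this operator-valued expectation is a well-defined Bochner integral taking bounded (in fact Hilbert--Schmidt) values: the integrand has Hilbert--Schmidt norm $\|\tilde\kappa_X\|\,\|\tilde\kappa_{X'}\|\,d(Y,Y')$, and by Cauchy--Schwarz together with the independence of $(X,Y)$ and $(X',Y')$ its expectation is bounded by $(E\|\tilde\kappa_X\|^2)\,(Ed^2(Y,Y'))^{1/2}$, which is finite under Assumption~4 and $Ed^2(Y,Y')<\infty$; here I use that $E\|\tilde\kappa_X\|^2=E\kappa_X(X,X)-\|\mu_X\|^2\le E\kappa_X(X,X)$. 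Linearity is immediate from linearity of the tensor product and of the expectation.

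Next I would establish the bilinear formula. Applying the rank-one action and the reproducing identities $\langle f,\kappa_X(\cdot,X)\rangle=f(X)$ and $\langle f,\mu_X\rangle=Ef(X)$ gives $\langle\tilde\kappa_{X'},g\rangle=g(X')-Eg(X')$ and $\langle f,\tilde\kappa_X\rangle=f(X)-Ef(X)$, whence
\[
\langle f,\Lambda_{XX'}g\rangle_{\mathcal{H}_X}=-E\{(f(X)-Ef(X))(g(X')-Eg(X'))\,d(Y,Y')\}.
\]
Self-adjointness then follows formally: since $d$ is symmetric and $(X,Y)$, $(X',Y')$ are i.i.d.\ (hence exchangeable), relabelling the two copies in this expression shows $\langle f,\Lambda_{XX'}g\rangle=\langle g,\Lambda_{XX'}f\rangle$, i.e.\ $\Lambda_{XX'}=\Lambda_{XX'}^{*}$ under the canonical identification $\mathcal{H}_{X'}=\mathcal{H}_X$.

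For the final identity I would invoke, exactly as in the proof of Proposition~1, the Schoenberg embedding: there is a separable real Hilbert space $\mathcal{H}$ and a map $\phi:\Omega\to\mathcal{H}$ with $d(Y,Y')=\|\phi(Y)-\phi(Y')\|_{\mathcal{H}}^2$. Setting $a(X)=f(X)-Ef(X)$ and $\beta_\phi=E\phi(Y)$ and expanding $\|\phi(Y)-\phi(Y')\|^2$ around $\beta_\phi$, the two terms carrying $\|\phi(Y)-\beta_\phi\|^2$ and $\|\phi(Y')-\beta_\phi\|^2$ factorize through $E\{a(X')\}=0$ and $E\{a(X)\}=0$ (using $(X,Y)$ independent of $(X',Y')$) and therefore vanish, leaving
\[
\langle f,\Lambda_{XX'}f\rangle_{\mathcal{H}_X}=2E\{a(X)a(X')\langle\phi(Y)-\beta_\phi,\phi(Y')-\beta_\phi\rangle\}.
\]
Independence lets this factorize into $2\|E\{a(X)(\phi(Y)-\beta_\phi)\}\|_{\mathcal{H}}^2$, which is precisely $2\{E[(f(X)-Ef(X))(\phi(Y)-E\phi(Y))]\}^2=2(\mathrm{cov}[f(X),\phi(Y)])^2$.

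I expect the only genuinely delicate point to be the operator-theoretic bookkeeping in the first step: confirming Bochner integrability, checking that the expectation indeed defines a bounded operator, and keeping the tensor-product convention consistent so that the adjoint is taken under the correct identification of $\mathcal{H}_{X'}$ with $\mathcal{H}_X$. The remaining two parts are essentially the reproducing-kernel transcription of the algebra already carried out for Proposition~1, so they should present no real difficulty.
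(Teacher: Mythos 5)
Your proposal is correct and follows essentially the same route as the paper's proof: Cauchy--Schwarz together with $E\kappa_X(X,X)<\infty$ and $Ed^2(Y,Y')<\infty$ for boundedness, the reproducing property for the bilinear formula, and the Schoenberg embedding $d(Y,Y')=\|\phi(Y)-\phi(Y')\|_{\mathcal{H}}^2$ with the centering-plus-independence cancellation for the quadratic identity. Your additional bookkeeping (Bochner integrability with Hilbert--Schmidt values, and self-adjointness via exchangeability of the two i.i.d.\ copies) only makes explicit what the paper leaves implicit, so it is a refinement of, not a departure from, the published argument.
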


\begin{proof}
For arbitrary $f,g\in \mathcal{H}_X$, we have
\begin{eqnarray*}
|\langle f,\Lambda_{XX'}g\rangle_{\mathcal{H}_X}|&\leq&E|\langle f,((\kappa_{X}(\cdot,X)-\mu_X)\otimes (\kappa_{X}(\cdot,X')-\mu_X)d(Y,Y'))g\rangle_{\mathcal{H}_X}|\\
&=&E\{|\langle f,\kappa_{X}(\cdot,X)-\mu_X\rangle_{\mathcal{H}_X}||\langle \kappa_{X}(\cdot,X')-\mu_X, g\rangle_{\mathcal{H}_X}|d(Y,Y')\}\\
&\leq&\|f\|_{\mathcal{H}_X}\|g\|_{\mathcal{H}_X}E\langle \kappa_{X}(\cdot,X)-\mu_X, \kappa_{X}(\cdot,X)-\mu_X\rangle_{\mathcal{H}_X}(Ed^2(Y,Y'))^{1/2}\\
&=&\|f\|_{\mathcal{H}_X}\|g\|_{\mathcal{H}_X}(E\kappa_{X}(X,X)-\mu_X^2)(Ed^2(Y,Y'))^{1/2}
\end{eqnarray*}
Since
$$
\mu_X^2\leq (E\|\kappa_{X}(\cdot,X)\|_{\mathcal{H}_X})^{2}=(E\kappa_{X}(X,X)^{1/2})^{2}\leq E\kappa_{X}(X,X)<\infty,
$$
and $Ed^2(Y,Y')<\infty$. Therefore, $\Lambda_{XX'}$ is a bounded liner and self-adjoint operator.
\begin{eqnarray*}
\langle f,\Lambda_{XX'}g\rangle_{\mathcal{H}_X}&=&-E\langle f,((\kappa_{X}(\cdot,X)-\mu_X)\otimes (\kappa_{X}(\cdot,X')-\mu_X)d(Y,Y'))g\rangle_{\mathcal{H}_X}\\
&=&-E\{\langle f,\kappa_{X}(\cdot,X)-\mu_X\rangle_{\mathcal{H}_X}\langle \kappa_{X}(\cdot,X')-\mu_X, g\rangle_{\mathcal{H}_X}d(Y,Y')\}\\
&=&-E\{(f(X)-Ef(X))(g(X')-E{g(X')})d(Y,Y')\}
\end{eqnarray*}

For arbitrary $f\in \mathcal{H}_X$, we have
\begin{eqnarray*}
\langle f,\Lambda_{XX'}f\rangle_{\mathcal{H}_X}&=&-\langle f,E((\kappa_{X}(\cdot,X)-\mu_X)\otimes (\kappa_{X}(\cdot,X')-\mu_X)d(Y,Y'))f\rangle_{\mathcal{H}_X}\\
&=&-E\{\langle f,((\kappa_{X}(\cdot,X)-\mu_X)\otimes (\kappa_{X}(\cdot,X')-\mu_X))f\rangle_{\mathcal{H}_X} \|\phi(Y)-\phi(Y')\|_{\mathcal{H}}^2\}\\
&=&2E\{\langle f(X)-Ef(X),f(X')-Ef(X')\rangle_{\mathcal{H}_X} \langle \phi(Y)-\beta_{\phi}(\mu), \phi(Y')-\beta_{\phi}(\mu)\rangle_\mathcal{H}\}\\
&=&2\{E(f(X)-Ef(X))\otimes (\phi(Y)-\beta_{\phi}(\mu)) \}^2\geq 0
\end{eqnarray*}
Therefore, $\Lambda_{XX'}$ is a semidefined operator.
\end{proof}

\subsection{Proof of Proposition~3}
\begin{proposition}\label{property M2}  Suppose assumptions (3)--(5) hold, then
$$\overline{\textup{ran}}\left\{\Sigma_{XX}^{-1}\Lambda_{XX'}\right\}\subseteq \mathcal{G}_{Y|X}.$$
\end{proposition}
\begin{proof}
 Firstly, we show that
\begin{equation*}
\overline{\rm{ran}}(\Lambda_{XX'})\subseteq \Sigma_{XX}\mathcal{G}_{Y|X}
\end{equation*}
which is equivalent to
\begin{equation*}
(\Sigma_{XX}\mathcal{G}_{Y|X})^{\bot}\subseteq \overline{\rm{ran}}(\Lambda_{XX'})^{\bot}.
\end{equation*}
Since $\overline{\rm{ran}}(\Lambda_{XX'})^{\bot}= \ker (\Lambda_{XX'})$, where $\ker(\Lambda_{XX'})$ denotes nuclear space generated by the operator $\Lambda_{XX'}$, it suffices to show that
\begin{equation*}
(\Sigma_{XX}\mathcal{G}_{Y|X})^{\bot}\subseteq \ker (\Lambda_{XX'}).
\end{equation*}
Now we define $\mathcal{G}_{\phi(Y)|X}$, we get
\begin{equation}\label{3}
(\Sigma_{XX}\mathcal{G}_{\phi(Y)|X})^{\bot}\subseteq \ker (\Lambda_{XX'}).
\end{equation}
Let $f\in (\Sigma_{XX}\mathcal{G}_{\phi(Y)|X})^{\bot}$. Then, for all $g\in \mathcal{G}_{\phi(Y)|X}$, we have
\begin{equation*}
\langle f,\Sigma_{XX}g \rangle_{\mathcal{H}_X}=cov\{f(X),g(X)\}=0.
\end{equation*}
Because $g$ is measurable with respect to $\mathcal{M}_{\phi(Y)|X}$, we have $g(X)=E[g(X)|\mathcal{M}_{\phi(Y)|X}]$. And
\begin{eqnarray*}
&&cov\{f(X),E[g(X)|\mathcal{M}_{\phi(Y)|X}]\}\\
&=&E[f(X)E[g(X)|\mathcal{M}_{\phi(Y)|X}]]-E[f(X)]E[g(X)]\\
&=&E[E[f(X)|\mathcal{M}_{\phi(Y)|X}]g(X)]-E[f(X)]E[g(X)]\\
&=&cov\{E[f(X)|\mathcal{M}_{\phi(Y)|X}],g(X)\}
\end{eqnarray*}
The second equation is based on the property of double expectation. Since $\mathcal{G}_{\phi(Y)|X}$ is dense in $L_2(P_X|\mathcal{M}_{\phi(Y)|X})$ modulo constants, there exists a sequence $\{f_n \subseteq \mathcal{G}_{\phi(Y)|X}\}$ such that $var[f_n(X)-f(X)]\rightarrow 0$. Then
\begin{equation}\label{1}
cov\{E[f(X)|\mathcal{M}_{\phi(Y)|X}], f_n(X)\}
=E\{E[f(X)|\mathcal{M}_{\phi(Y)|X}]f_n(X)\}-E[f(X)]E[f_n(X)]
=0
\end{equation}
On the other hand,
\begin{eqnarray}\label{2}
cov\{E[f(X)|\mathcal{M}_{\phi(Y)|X}], f_n(X)\}&\rightarrow&cov\{E[f(X)|\mathcal{M}_{\phi(Y)|X}], f(X)\}\\
&=&cov\{E[f(X)|\mathcal{M}_{\phi(Y)|X}], E[f(X)|\mathcal{M}_{\phi(Y)|X}]\}\nonumber
\end{eqnarray}
Combining \eqref{1} and \eqref{2}, we have
\begin{eqnarray*}
var\{E[f(X)|\mathcal{M}_{\phi(Y)|X}]\}=0
\end{eqnarray*}
This implies that $E[f(X)|\mathcal{M}_{\phi(Y)|X}]=$ constant almost surely. Since $\mathcal{M}_{\phi(Y)|X}$ is sufficient, we have $E[f(X)|\mathcal{M}_{\phi(Y)|X}]=E[f(X)|\phi(Y),\mathcal{M}_{\phi(Y)|X}]$. So  $E[f(X)|\phi(Y),\mathcal{M}_{\phi(Y)|X}]=$ constant almost surely. Consequently, $E[f(X)|\phi(Y)]=$ constant almost surely.

\begin{eqnarray*}
&&\Sigma^{-1}_{XX}E[(\kappa_X(\cdot,X)-\mu_X)\otimes (\kappa_X(\cdot,X')-\mu_{X})d(Y,Y')]\\
&=&\Sigma^{-1}_{XX}E[(\kappa_X(\cdot,X)-\mu_X)\otimes (\kappa_X(\cdot,X')-\mu_{X})\|\phi(Y)-\phi(Y')\|^2_{\mathcal{H}}]\\
&=&\Sigma^{-1}_{XX}E\{E[(\kappa_X(\cdot,X)-\mu_X)|\phi(Y)]\otimes E[(\kappa_X(\cdot,X')-\mu_{X})|\phi(Y')]\|\phi(Y)-\phi(Y')\|^2_{\mathcal{H}}]\}
\end{eqnarray*}
We can get
\begin{eqnarray*}
E[(\kappa_X(\cdot,X')-\mu_{X})|\phi(Y')]f=E[f(X'))|\phi(Y')]-\mu_{X}(f(X'))=0
\end{eqnarray*}
Therefore, $\Lambda_{XX'}f=0$. Then we have proved \eqref{3}.

By \eqref{3}, we have
\begin{eqnarray*}
\rm{ran}(\Lambda_{XX'})\subseteq \Sigma_{XX}\mathcal{G}_{\phi(Y)|X},
\end{eqnarray*}
which implies $\Sigma^{-1}_{XX}\rm{ran}(\Lambda_{XX'})\subseteq\mathcal{G}_{\phi(Y)|X}$. Note that
\begin{eqnarray*}
\Sigma^{-1}_{XX}\rm{ran}(\Lambda_{XX'})&=&\{\Sigma^{-1}_{XX}f: f=\Lambda_{XX'} g, g\in \mathcal{H}_{\phi(Y)}\}\\
&=&\{\Sigma^{-1}_{XX}\Lambda_{XX'} g:  g\in \mathcal{H}_{\phi(Y)}\}=\rm{ran}(\Sigma^{-1}_{XX}\Lambda_{XX'})
\end{eqnarray*}
Then, because $\mathcal{G}_{\phi(Y)|X}$ is closed, we have  $\overline{\rm{ran}}(\Sigma^{-1}_{XX}\Lambda_{XX'})\subseteq \mathcal{G}_{\phi(Y)|X}.$

Finally, we will show $\mathcal{G}_{\phi(Y)|X}\subseteq \mathcal{G}_{Y|X}$. It is easy to find that
\begin{eqnarray*}
Y\indep X|\mathcal{G}_{Y|X}\Rightarrow \phi(Y)\indep X|\mathcal{G}_{\phi(Y)|X}
\end{eqnarray*}
Therefore, we have $\mathcal{G}_{\phi(Y)|X}\subseteq \mathcal{G}_{Y|X}.$ The proof is completed.
\end{proof}

\subsection{Proof of Proposition~4}
\begin{proposition}\label{property M3}  Suppose assumptions (3)--(5) hold and $\mathcal{G}_{Y|X}$ is complete.
Then,
$$\overline{\textup{ran}}\left\{\Sigma_{XX}^{-1}\Lambda_{XX'}\right\}= \mathcal{G}_{Y|X}.$$
\end{proposition}
\begin{proof}
Form Proposition \ref{property M2}, we know $\overline{\rm{ran}}(\Lambda_{XX'})\subseteq \Sigma_{XX}\mathcal{G}_{Y|X}$. Therefore, we only need to show $\Sigma_{XX}\mathcal{G}_{Y|X} \subseteq \overline{\rm{ran}}(\Lambda_{XX'})$, or equivalently, $\rm{ker}(\Lambda_{X'X})\subseteq (\Sigma_{XX}\mathcal{G}_{Y|X})^{\perp}$. Let $f \in \rm{ker}(\Lambda_{X'X})$. Then $\Lambda_{X'X}f = 0$, which implies that $\Sigma^{-1}_{X'X'}\Lambda_{X'X}f=0$. By the proof of Proposition \ref{property M2}, we have $E(f(X)|\mathcal{M}_{\phi(Y)|X})=\rm{constant}$. Since $\mathcal{M}_{\phi(Y)|X} \subseteq \mathcal{M}_{Y|X}$, we have $E(f(X)|\mathcal{M}_{Y|X})=\rm{constant}$. It follows that, for any $g \in \Sigma_{XX}\mathcal{G}_{Y|X}$, we have
$$
cov(f(X),g(X))=cov(f(X),E(g(X)|\mathcal{M}_{Y|X}))=cov(E(f(X)|\mathcal{M}_{Y|X}),g(X))=0.
$$
That is, $f \in (\Sigma_{XX}\mathcal{G}_{Y|X})^{\perp}$. The proof is completed.
\end{proof}

\subsection{Proof of Theorem~3}
{\theorem\label{theorem3} Suppose assumptions (3)--(7) hold. In addition, assume that $Ed^2(Y,Y') < \infty$, then as $n\rightarrow \infty$
\begin{eqnarray*}
&&\|\hat{V}_{XX'}-V_{XX'}\|_{HS}=o_p(1),\ \ \
|\langle\hat{\psi}_1,\psi_1\rangle_{\text{HS}}|\stackrel{P}{\longrightarrow} 1,\\
&&\|\{\hat{f}_1(X)-E\hat{f}_1(X)\}-\{f_1(X)-Ef_1(X)\}\|{\longrightarrow} 0,
\end{eqnarray*}
where $\|\cdot\|$ in this theorem is the standard $L_2$ norm to measure the distance of functions and $\| \cdot \|_{\text{HS}}$ denotes the Hilbert-Schmidt norm.}

To prove this theorem, we need the following lemmas.
{\lemma\label{lemma M5} The cross-covariance operator $\Lambda_{XX'}$ is a Hilbert-Schmidt operator, and its Hilbert-Schmidt norm is given by
\begin{eqnarray*}
\|\Lambda_{XX'}\|^2_{\text{HS}}&=&\langle E\{(\kappa_{X}(,X)-\mu_X)\otimes(\kappa_{X}(,X')-\mu_X)d(Y,Y')\},\\
&&E\{(\kappa_{X}(,X)-\mu_X)\otimes(\kappa_{X}(,X')-\mu_X)d(Y,Y')\}\rangle\\
&=&E_{XX'YY'}E_{X''X'''Y''Y'''}[\langle(\kappa_{X}(,X)-\mu_X),(\kappa_{X}(,X'')-\mu_X)\rangle_{\mathcal{H}_X}\\
&&\langle(\kappa_{X}(,X')-\mu_X),(\kappa_{X}(,X''')-\mu_X)\rangle_{\mathcal{H}_X}d(Y,Y')d(Y'',Y''')]\\
&=&\|E_{XX'YY'}[(\kappa_{X}(,X)-\mu_X)(\kappa_{X}(,X')-\mu_X)d(Y,Y')]\|_{\mathcal{H}_X\otimes \mathcal{H}_X}^2
\end{eqnarray*}
where $(X,Y)$,$(X',Y')$,$(X'',Y'')$ and $(X''',Y''')$ are independently and identically with distribution $P_{XY}$.}

From the facts $\mathcal{H}_X \subset L_2(P_X)$, the law of large numbers implies for each $f \in \mathcal{H}_X$,
$$
\lim_{n\rightarrow \infty}\langle f,\widehat{\Lambda}_{XX'}f\rangle_{\mathcal{H}_X}=\langle f,\Lambda_{XX'}f\rangle_{\mathcal{H}_X}
$$
in probability. Moreover, the central limit theorem shows that the above convergence rate is of order $O_p(n^{-1/2})$. The following lemma shows the tight uniform result that $\|\widehat{\Lambda}_{XX'}-\Lambda_{XX'}\|_{HS}$ converges to zero in the order of $O_p(n^{-1/2})$.

{\lemma \label{lem1212} Under the Assumption 3 and $Ed^2(Y,Y') < \infty$, we have
\begin{eqnarray*}
\|\widehat{\Lambda}_{XX'}-\Lambda_{XX'}\|_{HS}=O_p(n^{-1/2})
\end{eqnarray*}
}
\begin{proof}
Write for simplicity $\eta=\kappa_{X}(\cdot,X)-\mu_{X}$ and $\mathcal{F}=\mathcal{H}_X\otimes \mathcal{H}_X$. Then $\eta_1,\ldots, \eta_n$ are i.i.d. random elements in $\mathcal{H}_X$. Lemma \ref{lemma M5} implies
\begin{eqnarray*}
\|\widehat{\Lambda}_{XX'}\|^2_{HS}&=&\|\frac{1}{n(n-1)}\sum_{i\neq j}^n[(\eta_i-\frac{1}{n}\sum_{s=1}^n\eta_s)(\eta_j-\frac{1}{n}\sum_{s=1}^n\eta_s)d(Y_i,Y_j)]\|_{\mathcal{F}}^2.
\end{eqnarray*}
Then we can derive that
\begin{eqnarray*}
&&\langle \Lambda_{XX'},\widehat{\Lambda}_{XX'}\rangle_{HS}\\
&=&\langle E[\eta\eta'd(Y,Y')],\frac{1}{n(n-1)}\sum_{i\neq j}^n[(\eta_i-\frac{1}{n}\sum_{s=1}^n\eta_s)(\eta_j-\frac{1}{n}\sum_{s=1}^n\eta_s)d(Y_i,Y_j)]\rangle_{\mathcal{F}}
\end{eqnarray*}
From these equations, we have
\begin{eqnarray*}
&&\|\widehat{\Lambda}_{XX'}-\Lambda_{XX'}\|^2_{HS}\\
&=&\|\Lambda_{XX'}\|^2_{HS}-2\langle \Lambda_{XX'},\widehat{\Lambda}_{XX'}\rangle_{HS}+\|\widehat{\Lambda}_{XX'}\|^2_{HS}\\
&=&\|\frac{1}{n(n-1)}\sum_{i\neq j}^n[(\eta_i-\frac{1}{n}\sum_{s=1}^n\eta_s)(\eta_j-\frac{1}{n}\sum_{s=1}^n\eta_s)d(Y_i,Y_j)]-E[\eta\eta'd(Y,Y')]\|^2_{\mathcal{F}}\\
&=&\|\frac{1}{n(n-1)}\sum_{i\neq j}^n(\eta_i\eta_jd(Y_i,Y_j)-E[\eta\eta'd(Y,Y')])\\
&&-[(\frac{1}{n}\sum_{s=1}^n\eta_s)((\frac{1}{n}\sum_{s=1}^n\eta_s)(\frac{1}{n(n-1)}\sum_{i\neq j}d(Y_i,Y_j))-\frac{1}{n(n-1)}\sum_{i\neq j}(\eta_i+\eta_j)d(Y_i,Y_j))]\|^2_{\mathcal{F}}\\
\end{eqnarray*}
which provides an upper bound
\begin{eqnarray*}
&&\|\widehat{\Lambda}_{XX'}-\Lambda_{XX'}\|_{HS}\\
&\leq&\|\frac{1}{n(n-1)}\sum_{i\neq j}^n(\eta_i\eta_jd(Y_i,Y_j)-E[\eta\eta'd(Y,Y')])\|_{\mathcal{F}}\\
&&+\|(\frac{1}{n}\sum_{s=1}^n\eta_s)\|_{\mathcal{H}}\|[((\frac{1}{n}\sum_{s=1}^n\eta_s)(\frac{1}{n(n-1)}\sum_{i\neq j}d(Y_i,Y_j))\\
&&-\frac{1}{n(n-1)}\sum_{i\neq j}(\eta_i+\eta_j)d(Y_i,Y_j))]\|_{\mathcal{H}}\\
&=&\|\frac{1}{n(n-1)}\sum_{i\neq j}^n(\eta_i\eta_jd(Y_i,Y_j)-E[\eta\eta'd(Y,Y')])\|_{\mathcal{F}}\\
&&+\|\frac{1}{n}\sum_{s=1}^n\eta_s\|_{\mathcal{H}}\|\frac{1}{n^2(n-1)}\sum_{k\neq i\neq j}\eta_kd(Y_i,Y_j)\|_{\mathcal{H}}\\
\end{eqnarray*}
By simple calculation, we obtain
\begin{eqnarray}\label{M00}
&&E\|\frac{1}{n(n-1)}\sum_{i\neq j}^n(\eta_i\eta_jd(Y_i,Y_j)-E[\eta\eta'd(Y,Y')])\|^2_{\mathcal{F}}\\ \nonumber
&=&\frac{1}{n^2(n-1)^2}\sum_{i\neq j, k\neq t}E\langle \eta_i\eta_jd(Y_i,Y_j)-E[\eta\eta'd(Y,Y')],\eta_k\eta_td(Y_k,Y_t)-E[\eta\eta'd(Y,Y')]\rangle_{\mathcal{F}}\\ \nonumber
&=&\frac{C_1}{n}E\langle \eta\eta'd(Y,Y')-E[\eta\eta'd(Y,Y')],\eta\eta''d(Y,Y'')-E[\eta\eta'd(Y,Y')]\rangle_{\mathcal{F}}\\ \nonumber
&&+\frac{2}{n(n-1)}E\|\eta\eta'd(Y,Y')-E[\eta\eta'd(Y,Y')]\|^2_{\mathcal{F}}\\ \nonumber
&=&O(n^{-1})
\end{eqnarray}
because $E\|\eta\eta'd(Y,Y')\|^2_{\mathcal{H}}<\infty$ by assumption 3 and $Ed^2(Y,Y')<\infty$.
\begin{eqnarray}\label{M01}
E\|\frac{1}{n}\sum_{s=1}^n\eta_s\|^2_{\mathcal{H}}=\frac{1}{n}E\|\eta_s\|^2_{\mathcal{H}}=O(n^{-1})
\end{eqnarray}
Since $E\eta''d(Y,Y')=E\eta''Ed(Y,Y')=0$. By the law of large numbers, for any $f,g\in \mathcal{H}_X$, we have
\begin{eqnarray}\label{M02}
\lim_{n\rightarrow \infty}\langle g,(\eta_kd(Y_i,Y_j))f\rangle_{\mathcal{H}_X} = \langle g,(\eta''d(Y,Y'))f\rangle_{\mathcal{H}_X}
\end{eqnarray}
in probability. Combining \eqref{M00}, \eqref{M01} and \eqref{M02}, we have
\begin{eqnarray*}
\|\widehat{\Lambda}_{XX'}-\Lambda_{XX'}\|_{HS}=O_p(n^{-1/2})
\end{eqnarray*}
\end{proof}

{\lemma \label{le85} Let $\varepsilon_n$ be a positive number such that $\varepsilon_n \rightarrow 0\ (n\rightarrow \infty)$. Then, for the i.i.d. sample $(X_1,Y_1),\ldots,(X_n,Y_n)$, we have
\begin{eqnarray*}
\|\widehat{V}_{XX'}-(\Sigma_{XX}+\varepsilon_n I)^{-1/2}\Lambda_{XX'}\Lambda_{XX'}^*(\Sigma_{XX}+\varepsilon_n I)^{-1/2}\|
=O_p(\frac{1}{\varepsilon_n^{3/2}n^{1/2}})
\end{eqnarray*}
\begin{proof}
The left hand side term can be decomposed as
\begin{eqnarray}\nonumber \label{le81}
&&\widehat{V}_{XX'}-(\Sigma_{XX}+\varepsilon_n I)^{-1/2}\Lambda_{XX'}\Lambda_{XX'}^*(\Sigma_{XX}+\varepsilon_n I)^{-1/2}\\ \nonumber
&=&[(\widehat{\Sigma}_{XX}+\varepsilon_n I)^{-1/2}-(\Sigma_{XX}+\varepsilon_n I)^{-1/2}]\widehat{\Lambda}_{XX'}\widehat{\Lambda}_{XX'}^*(\widehat{\Sigma}_{XX}+\varepsilon_n I)^{-1/2}\\ \nonumber
&+&(\Sigma_{XX}+\varepsilon_n I)^{-1/2}[\widehat{\Lambda}_{XX'}-\Lambda_{XX'}]\widehat{\Lambda}_{XX'}^*(\widehat{\Sigma}_{XX}+\varepsilon_n I)^{-1/2}\\ \nonumber
&+&(\Sigma_{XX}+\varepsilon_n I)^{-1/2}\Lambda_{XX'}(\widehat{\Lambda}_{XX'}^*-\Lambda_{XX'}^*)(\widehat{\Sigma}_{XX}+\varepsilon_n I)^{-1/2}\\
&+&(\Sigma_{XX}+\varepsilon_n I)^{-1/2}\Lambda_{XX'}\Lambda_{XX'}^*[(\widehat{\Sigma}_{XX}+\varepsilon_n I)^{-1/2}-(\Sigma_{XX}+\varepsilon_n I)^{-1/2}]
\end{eqnarray}
From the equation
$$
A^{-1/2}-B^{-1/2}=A^{-1/2}(B^{3/2}-A^{3/2})B^{-3/2}+(A-B)B^{-3/2}.
$$
The first term in the right hand of the equation can be written
\begin{eqnarray*}
&&[(\widehat{\Sigma}_{XX}+\varepsilon_n I)^{-1/2}-(\Sigma_{XX}+\varepsilon_n I)^{-1/2}]\widehat{\Lambda}_{XX'}\widehat{\Lambda}_{XX'}^*(\widehat{\Sigma}_{XX}+\varepsilon_n I)^{-1/2}\\
&=&\{(\widehat{\Sigma}_{XX}+\varepsilon_n I)^{-1/2}((\Sigma_{XX}+\varepsilon_n I)^{3/2}-(\widehat{\Sigma}_{XX}+\varepsilon_n I)^{3/2})\\
&&+(\widehat{\Sigma}_{XX}-\Sigma_{XX})\}(\widehat{\Sigma}_{XX}+\varepsilon_n I)^{-3/2}\widehat{\Lambda}_{XX'}\widehat{\Lambda}_{XX'}^*(\widehat{\Sigma}_{XX}+\varepsilon_n I)^{-1/2}\\
\end{eqnarray*}
From $(\widehat{\Sigma}_{XX}+\varepsilon_n I)^{-1/2}\leq \varepsilon_n^{-1/2}$, $\|(\widehat{\Sigma}_{XX}+\varepsilon_n I)^{-1/2}\widehat{\Lambda}_{XX'}\widehat{\Lambda}_{XX'}^*(\widehat{\Sigma}_{XX}+\varepsilon_n I)^{-1/2}\|\leq C$ and Lemma 8 in \cite{Fukumizu2007}. The norm of the above operator is bounded from above by
\begin{eqnarray*}
&&\frac{C}{\varepsilon_n}\{\frac{3}{\sqrt{\varepsilon_n}}\max\{\|\Sigma_{XX}+\varepsilon_n I\|^{3/2},\|\widehat{\Sigma}_{XX}+\varepsilon_n I\|^{3/2}\}+1\}\|\widehat{\Sigma}_{XX}-\Sigma_{XX}\|\\
&=&O_p(\varepsilon_n^{-3/2}n^{-1/2})
\end{eqnarray*}

For the second term, we have
\begin{eqnarray*}
(\Sigma_{XX}+\varepsilon_n I)^{-1/2}[\widehat{\Lambda}_{XX'}-\Lambda_{XX'}]\widehat{\Lambda}_{XX'}^*(\widehat{\Sigma}_{XX}+\varepsilon_n I)^{-1/2}=O_p(\frac{1}{\varepsilon_nn^{1/2}})
\end{eqnarray*}
The third and fourth terms are similar to the second and first terms. Correspondingly, their bounds are $O_p(\frac{1}{\varepsilon_nn^{1/2}})$ and $O_p(\frac{1}{\varepsilon_n^{3/2}n^{1/2}})$, respectively.
\end{proof}
}

{\lemma \label{le84} Assumption $V_{XX'}$ is compact. Then for a sequence $\varepsilon_n\rightarrow 0$,
\begin{eqnarray*}
\|(\Sigma_{XX}+\varepsilon_n I)^{-1/2}\Lambda_{XX'}\Lambda_{XX'}^*(\Sigma_{XX}+\varepsilon_n I)^{-1/2}-V_{XX'}\|=o_p(1)
\end{eqnarray*}
}
\begin{proof}
An upper bound of the left hand side of the assertion is given by
\begin{eqnarray}\label{5}
&&\|\{(\Sigma_{XX}+\varepsilon_nI)^{-1/2}-\Sigma_{XX}^{-1/2}\}\Lambda_{XX'}\Lambda_{XX'}^*(\Sigma_{XX}+\varepsilon_nI)^{-1/2}\|\\
&+&\|\Sigma_{XX}^{-1/2}\Lambda_{XX'}\Lambda_{XX'}^*\{(\Sigma_{XX}+\varepsilon_nI)^{-1/2}-\Sigma_{XX}^{-1/2}\}\|\nonumber
\end{eqnarray}
The first term of (\ref{5}) is bounded by
\begin{eqnarray}\label{4}
\|\{(\Sigma_{XX}+\varepsilon_nI)^{-1/2}\Sigma_{XX}^{1/2}-I\}\Sigma_{XX}^{-1/2}\Lambda_{XX'}\Lambda_{XX'}^*\Sigma_{XX}^{-1/2}\|.
\end{eqnarray}
Note that the range $\Sigma_{XX}^{-1/2}\Lambda_{XX'}\Lambda_{XX'}^*\Sigma_{XX}^{-1/2}$ is included in $\overline{\mathcal{R}(\Sigma_{XX})}$. Let $v$ be an arbitrary element in $\mathcal{R}(\Sigma_{XX}^{-1/2}\Lambda_{XX'}\Lambda_{XX'}^*\Sigma_{XX}^{-1/2})\bigcap\mathcal{R}(\Sigma_{XX})$. Then there exists $u \in \mathcal{H}_X$ such that $v=\Sigma_{XX}u$. Noting that $\Sigma_{XX}$ and $(\Sigma_{XX}+\varepsilon_nI)^{1/2}$ are commutative, we have
\begin{eqnarray*}
&&\|\{(\Sigma_{XX}+\varepsilon_nI)^{-1/2}\Sigma_{XX}^{1/2}-I\}v\|_{\mathcal{H}_X}\\
&=&\|\{(\Sigma_{XX}+\varepsilon_nI)^{-1/2}\Sigma_{XX}^{1/2}-I\}\Sigma_{XX}u\|_{\mathcal{H}_X}\\
&=&\|(\Sigma_{XX}+\varepsilon_nI)^{-1/2}\Sigma_{XX}^{1/2}\{\Sigma_{XX}^{1/2}-(\Sigma_{XX}+\varepsilon_nI)^{1/2}\}\Sigma_{XX}^{1/2}u\|_{\mathcal{H}_X}\\
&\leq&\|\Sigma_{XX}^{1/2}-(\Sigma_{XX}+\varepsilon_nI)^{1/2}\|\|\Sigma_{XX}^{1/2}u\|_{\mathcal{H}_X}.
\end{eqnarray*}
$\Sigma_{XX}+\varepsilon_nI\rightarrow \Sigma_{XX}$ in norm means that $(\Sigma_{XX}+\varepsilon_nI)^{1/2}\rightarrow \Sigma_{XX}^{1/2}$ in norm, the convergence
\begin{eqnarray*}
\{(\Sigma_{XX}+\varepsilon_nI)^{-1/2}\Sigma_{XX}^{1/2}-I\}v \longrightarrow 0\ \ \ (n\rightarrow\infty)
\end{eqnarray*}
holds for all $v\in \mathcal{R}(\Sigma_{XX}^{-1/2}\Lambda_{XX'}\Lambda_{XX'}^*\Sigma_{XX}^{-1/2})\bigcap\mathcal{R}(\Sigma_{XX})$. Because $\Sigma^{-1}\Lambda_{XX'}$ is compact, Lemma 9 in \cite{Fukumizu2007} shows \eqref{4} converges to zero. The convergence of second term in \eqref{5} can be proved similarly.
\end{proof}

{\lemma\label{86} Let $A$ be a compact positive operator on a Hilbert space ${H}$, and $A_n(n\in {N})$ be bounded positive operators on $\mathcal{H}$ such that $A_n$ converges to A in norm. Assume that the eigenspace of $A$ corresponding to the largest eigenvalue is one-dimensional spanned by a unit eigenvector $\phi$, and the maximum of the spectrum of $A_n$ is attained by a unit eigenvector $f_n$. Then
\begin{eqnarray*}
|\langle f_n,\phi\rangle_\mathcal{H}|\rightarrow 1 \ \ (n\rightarrow \infty).
\end{eqnarray*}
}

\begin{proof}
Because $A$ is compact and positive, the eigen-decomposition
\begin{eqnarray*}
A=\sum_{i=1}^{\infty}\rho_i\psi_i\langle\psi_i,\cdot\rangle_{\mathcal{H}}
\end{eqnarray*}
holds, where $\rho_1>\rho_2\geq\rho_3\geq \cdots \geq 0$ are eigenvalues and $\{\psi_i\}$ is the corresponding eigenvectors so that $\{\psi_i\}$ is the CONS of $\mathcal{H}$.

Let $\delta_n=|\langle f_n,\psi_1\rangle_{\mathcal{H}}|$. We have
\begin{eqnarray*}
\langle f_n,Af_n\rangle_{\mathcal{H}}&=&\rho_1\langle f_n,\psi_1\rangle_{\mathcal{H}}^2+\sum_{i=2}^{\infty}\rho_i\langle f_n,\psi_i\rangle_{\mathcal{H}}^2\\
&\leq&\rho_1\langle f_n,\psi_1\rangle_{\mathcal{H}}^2+\rho_2(1-\langle f_n,\psi_1\rangle_{\mathcal{H}}^2)=\rho_1\delta_n^2+\rho_2(1-\delta_n^2).
\end{eqnarray*}
On the other hand, the convergence
\begin{eqnarray*}
|\langle f_n,Af_n\rangle-\langle\psi_1,A\psi_1\rangle_{\mathcal{H}}|&\leq& |\langle f_n,Af_n\rangle_{\mathcal{H}}-\langle f_n,A_nf_n\rangle_{\mathcal{H}}|+|\langle f_n,A_nf_n\rangle_{\mathcal{H}}-\langle\psi_1,A\psi_1\rangle_{\mathcal{H}}|\\
&\leq&\|A-A_n\|_{\mathcal{H}}+|\|A_n\|_{\mathcal{H}}-\|A\|_{\mathcal{H}}|\rightarrow 0
\end{eqnarray*}
implies that $\langle f_n,Af_n\rangle$ must converges to $\rho_1$. These two facts, together with $\rho_1>\rho_2$, result in $\delta\rightarrow 1$.

From the norm convergence $Q_nA_nQ_n \rightarrow QAQ$, where $Q_n$ and Q are the orthogonal projections onto the orthogonal complements of $f_n$ and $f$, respectively, we have convergence of the eigenvector corresponding to the first eigenvalue. It is not difficult to obtain convergence of the eigenspaces corresponding to the $m$th eigenvalue in a similar way.
\end{proof}

\begin{proof}[of Theorem~3]
The first and second equations are proved by Lemma \ref{le85} and \ref{le84}. Now we prove the third equation. Without loss of generality, we can assume $\hat{\psi}_1\rightarrow \psi_1$ in $\mathcal{H}_X$. The squared $L_2(P_X)$ distance between $\hat{f}_1-E\hat{f}_1(X)$ and $f_1-Ef_1(X)$ is given by
\begin{eqnarray*}
\|\Sigma_{XX}^{1/2}(\hat{f}_1-f_1)\|^2_{\mathcal{H}_X}=\|\Sigma_{XX}^{1/2}\hat{f}_1\|^2_{\mathcal{H_X}}-2\langle \psi_1,\Sigma_{XX}^{1/2}\hat{f}_1 \rangle_{\mathcal{H}_X}+\|\psi_1\|^2_{\mathcal{H}_X}.
\end{eqnarray*}
Thus, it suffices to show $\Sigma_{XX}^{1/2}\hat{f}_1$ converges to $\psi_1 \in \mathcal{H}_X$ in probability. We have
\begin{eqnarray}\nonumber \label{le82}
\|\Sigma_{XX}^{1/2}\hat{f}_1-\psi_1\|_{\mathcal{H}_X}&\leq& \|\Sigma_{XX}^{1/2}\{(\widehat{\Sigma}_{XX}+\varepsilon_nI)^{-1/2}-(\Sigma_{XX}+\varepsilon_nI)^{-1/2}\}\widehat{\psi}_1\|_{\mathcal{H}_X}\\ \nonumber
&+&\|\Sigma_{XX}^{1/2}(\Sigma_{XX}+\varepsilon_nI)^{-1/2}(\widehat{\psi}_1-\psi_1)\|_{\mathcal{H}_X}\\
&+&\|\Sigma_{XX}^{1/2}(\Sigma_{XX}+\varepsilon_nI)^{-1/2}\psi_1-\psi_1\|_{\mathcal{H}_X}
\end{eqnarray}
Using the same argument as in the bound of the first term of \eqref{le81}, the first term in \eqref{le82} is shown to converge to zero. The second term obviously  converges to zero. Similar to Lemma \ref{le84}, the third term converge to zero, which completes the proof.
\end{proof}

\subsection{Proof of Proposition~4}
\begin{proposition}\label{coordinate representation.} Let $K_n$ be the $n\times n$ kernel matrix whose $(i, j)$th element is $\kappa_X(X_i, X_j)$. Denote $J_n$ as the $n\times n$ matrix whose elements are all one. Define $G_X= (I_n-J_n/n)K_n
(I_n-J_n/n)$ and let $D_Y$ be the $n\times n$  matrix whose $(i, j)$th element is $d(Y_i,Y_j)$. Then we have $ G_X\alpha_\ell= \gamma_\ell$, where $\gamma_\ell$ is the $\ell$th eigenvector of the following matrix
\[(G_X+\varepsilon_n I_n)^{-1}G_XD_YG_XD_YG_X(G_X+\varepsilon_n I_n)^{-1}.\]
\end{proposition}
\begin{proof}
The subspace $\overline{\rm{ran}}(\widehat{\Lambda}_{XX'})$ is spanned by the set
\begin{eqnarray*}
{\mathcal{C}_X}=\{\kappa_X(\cdot,X_i)-E_n\kappa_X(\cdot,X): i=1,\ldots,n\}=\{\eta_1,\ldots,\eta_n\}.
\end{eqnarray*}

Define $[\cdot]_{\mathcal{C}_X}$ as the coordinate representation about the system ${\mathcal{C}_X}$. Note that the member of this spanning system are not linearly independent because their summation is the zero function. We in the next find the coordinate representation of $-\widehat{\Lambda}_{XX'}$.
\begin{eqnarray*}
[-\widehat{\Lambda}_{XX'}\eta_i]_{\mathcal{C}_X}&=&(n(n-1))^{-1}[(\sum_{k\neq t}^n \eta_k\otimes \eta_td(Y_k,Y_t))\eta_i]_{\mathcal{C}_X}\\
&=&(n(n-1))^{-1}(\sum_{k\neq t}^n [\eta_k]_{\mathcal{C}_X}[\eta_t]_{\mathcal{C}_X}^Td(Y_k,Y_t)G_{X})[\eta_i]_{\mathcal{C}_X}
\end{eqnarray*}
Because $\eta_i$ is simply the $i$th member of the spanning system ${\mathcal{C}_X}$, we have $[\eta_i]_{\mathcal{C}_X}=e_i$. Moreover,
\begin{eqnarray*}
\langle \eta_i,\eta_j\rangle_{\mathcal{H}_X}=\kappa_X(X_i,X_j)
-n^{-1}\sum_{l=1}^n\kappa_X(X_i,X_l)-n^{-1}\sum_{k=1}^n\kappa_X(X_j,X_k)+n^{-2}\sum_{k=1}^n\sum_{l=1}^n\kappa_X(X_k,X_l)
\end{eqnarray*}
 Therefore, the Gram matrix of the set $\mathcal{C}_X$ is $G_{X}=(I_n-J_n/n)K_n
 (I_n-J_n/n)$. Then
\begin{eqnarray*}
[-\widehat{\Lambda}_{XX'}\eta_i]_{\mathcal{C}_X}=(n(n-1))^{-1}(\sum_{k\neq t}e_ke_t^Td(Y_k,Y_t))G_{X}e_i=D_{Y}G_{X}e_i
\end{eqnarray*}
\begin{eqnarray*}
[-\widehat{\Lambda}_{XX'}]_{\mathcal{C}_X}=([-\widehat{\Lambda}_{XX'}\eta_1]_{\mathcal{C}_X},\ldots,[-\widehat{\Lambda}_{XX'}\eta_n]_{\mathcal{C}_X})
=D_{X}G_{X}(e_1,\ldots,e_n)=D_{Y}G_X
\end{eqnarray*}
Similarly, we can get $[\widehat{\Sigma}_{XX}]_{\mathcal{C}_X}=G_X$. The proof is completed.
\end{proof}

\end{document}